\numberwithin{equation}{section}
\newlength{\papersidemargin}
\newlength{\papertopmargin}
\newlength{\maxcarwidth}
\theoremstyle{definition}
\theoremstyle{plain}
\theoremstyle{definition}
\newtheorem{Remark}{Remark}[section]
\theoremstyle{remark}
\newtheorem{example}{Example}[section]
\theoremstyle{definition}
\newtheorem{definition}{Definition}[section]
\theoremstyle{plain}
\newtheorem{theorem}{Theorem}[section]
\newtheorem{proposition}{Proposition}[section]
\newtheorem{corollary}{Corollary}[section]
\newcounter{remcount}
\newcounter{propcount}
\newlength{\maxlabelwidth}
\def\cI{{\cal I}}
\def\cR{{\cal R}}
\def\cU{{\cal U}}
\def\cV{{\cal V}}
\def\cW{{\cal W}}
\def\cZ{{\cal Z}}
\def\bE{{\mathbb E}}
\def\bF{{\mathbb F}}
\def\bH{{\mathbb H}}
\def\bL{{\mathbb L}}
\def\bN{{\mathbb N}}
\def\bQ{{\mathbb Q}}
\def\bR{{\mathbb R}}
\def\bV{{\mathbb V}}
\def\bW{{\mathbb W}}
\def\bZ{{\mathbb Z}}
\def\a{\alpha}
\def\g{\gamma}        
        \def\D{\Delta}
    \def\ep{\epsilon}
\def\l{\lambda}       
\def\m{\mu}
\def\rr{\rho}
\def\s{\sigma}
\def\o{\omega}        \def\O{\Omega}
\def\fB{{\mathfrak B}}
\def\fC{{\mathfrak C}}
\newcommand{\sA}{\mathscr{A}}
\newcommand{\inst}[1]{$^\textrm{#1}$ }
\newcommand{\email}[1]{e-mail: #1}
\newcommand{\fffC}{\fC_c(\bE)}
\newcommand{\ffC}{\fC(\bE)}
\newcommand{\co}{\text{co}}
\newcommand{\cco}{\overline{\text{co}}}
\newcommand{\aint}{\text{(A)}\negthickspace\int_{\O}\negthickspace\negthickspace}
\newcommand{\hint}{\text{(H)}\negthickspace\int_{\O}\negthickspace\negthickspace}
\newcommand{\Aint}{(\text{A})\negthickspace\int_{\O^{\prime}}\negthickspace\negthickspace}
\newcommand{\Hint}{(\text{H})\negthickspace\int_{\O^{\prime}}\negthickspace\negthickspace}
\newcommand{\mi}{\overset{.}{+}}
\begin{document}
\title{Convexification in the limit and strong law\\
 of large numbers for closed-valued\\
random sets in Banach spaces}
\author{Francesco S. de Blasi\inst{1}
\and Luca Tomassini\inst{2}}

\date{
\parbox[t]{0.9\textwidth}{\footnotesize{%
\begin{enumerate}
\renewcommand{\theenumi}{\arabic{enumi}}
\renewcommand{\labelenumi}{\theenumi}
\item[1.] Dipartimento di Matematica, Universit\`a di Roma ``Tor Vergata'', Via della Ricerca Scientifica, 1, I-00133 Roma, Italy, \email{deblasi@mat.uniroma2.it}
\item[2.] Dipartimento di Scienze, Universit\`a di
Chieti-Pescara G. d'Annunzio, I-65127, \email{tomassini@sci.unich.it} 
\end{enumerate}
}}
\\
\vspace{\baselineskip}
}

\maketitle

\section{Introduction}\label{section1}

The study of the strong law of large numbers (SLLN) for compact-valued random sets was started by Artstein and Vitale \cite{Artstein1975}, with their seminal 1975 paper. Ever since, significant extensions and developments have been obtained by several authors, including Puri and Ralescu \cite{Puri1983, Puri1985}, Cressie \cite{Cressie1978}, Hess \cite{Hess1979, Hess1999}, Artstein and Hansen \cite{Artstein1985}, Hiai \cite{Hiai1984}, Ter\'an and Molchanov \cite{Teran2006}. A systematic presentation of the status of the theory and the multivalued analysis prerequisites can be found in the monographs of Molchanov \cite{Molchanov2005}, Hu and Papageorgiou \cite{Hu1997} and Castaing and Valadier \cite{Castaing1977}.

In the SLLN for compact-valued random sets, it is somehow natural to investigate convergence in the sense of the Pompeiu-Hausdorff metric $h$. This type of convergence does not seem to be appropriate for random sets which are merely bounded and closed-valued and thus, in this case, the SLLN has been studied by using weaker modes of convergence, like Wijsman's convergence \cite{Wijsman1966} or Mosco's convergence \cite{Mosco1969, Beer1993}.

In the present paper we consider a SLLN for random sets with bounded and closed values contained in an arbitrary (not necessarily separable) Banach space and, in this context, we shall use a notion of convergence, namely Fisher's convergence, which is stronger than Wijsman's convergence but in general not comparable with Mosco's convergence.

More precisely, denote by $\bE$ an arbitrary real Banach space and by $\fC(\bE)$ (resp. $\fC_{c}(\bE)$) the space of all nonempty bounded closed (resp. nonempty bounded closed and convex) subsets of $\bE$ endowed with the Pompeiu-Hausdorff metric $h$. Let $(\O, \sA, P)$ be a complete probability space without atoms and let $\cU$ be a closed and separable subset of $\fC(\bE)$. Let $\{X_n\}_{n=1}^{+\infty}$ be a sequence of independent identically distributed (i.i.d.) random sets $X_n\; :\; \O\rightarrow\cU$. If, in addition, for $\o\in\O$ almost surely (a.s.) the sequence $\{X_n(\o)\}_{n=1}^{+\infty}$ is compact in $\fC(\bE)$, then it is shown (Theorem \ref{theo6.2}) that
\begin{equation}\label{eq1.1}
 \frac{\overline{X_1(\o)+\dots +X_n(\o)}}{n}\xrightarrow{F} C \qquad\qquad\qquad  \text{a.s. on}\;\O.
\end{equation}
Here the convergence is in the sense of Fisher, and $C\in\fC_{c}(\bE)$ is the expectation of $X_1$. Moreover, if $C$ is compact, the convergence in \eqref{eq1.1} is in the Pompeiu-Hausdorff metric $h$.

To prove the above result, we first consider the corresponding sequence of random sets $\{\overline{co}X_n\}_{n=1}^{+\infty}$ and, using the R\text{\r{a}}dstr$\Ddot{\text o}$m's embedding theorem, we show (Theorem \ref{theo6.1}) that
\begin{equation*}
 \frac{\overline{\overline{co}X_1(\o)+\dots +\overline{co}X_n(\o)}}{n}\xrightarrow{h} C \qquad\qquad\qquad \text{a.s. on}\;\O,
\end{equation*}
where the convergence is in the Pompeiu-Hausdorff metric $h$. Hence, a fortiori,
\begin{equation}\label{eq1.2}
 \frac{\overline{\overline{co}X_1(\o)+\dots +\overline{co}X_n(\o)}}{n}\xrightarrow{F} C \qquad\qquad\qquad \text{a.s. on}\;\O,
\end{equation}
From this, by virtue of a convexification in the limit result (Theorem \ref{theo4.1}), it follows that
\begin{equation}\label{eq1.3}
 \frac{\overline{X_1(\o)+\dots +X_n(\o)}}{n}\xrightarrow{F} C \qquad\qquad\qquad \text{a.s. on}\;\O,
\end{equation}
and thus \eqref{eq1.1} is valid.

We observe that the precompactness of $\{X_n(\o)\}_{n=1}^{+\infty}$ in $\fC(\bE)$ is actually required in order to deduce \eqref{eq1.3} from \eqref{eq1.2}. Furthermore, it is worth noting that the separability of $\cU$ in $\fC(\bE)$ does not imply that the set $U=\cup\{X\; |\; X\in\cU\}$ be separable in $\bE$ and, similarly, the compactness of the sequence $\{X_n(\o)\}_{n=1}^{+\infty}$ does not imply that the set $V=\cup\{X_n(\o)\;|\; n\in\bN\}$ be precompact in $\bE$. To see this it suffices to consider $\cU=\{\overline{B}\}$ and $X_n(\o)=\overline{B}$ for each $n\in\bN$, where $\overline{B}$ is the closed unit ball of an infinite dimensional non-separable Banach space.

The present paper is organized into six sections, including the Introduction. Section \ref{section2} contains notations and preliminaries. In Section \ref{section3} we review some elementary properties of various notions of convergence in spaces of sets. In Sections \ref{section4} we establish a convexification in the limit result for Fisher convergent sequences of sets in $\fC(\bE)$. In Section \ref{section5} we introduce a notion of expectation for random sets with values in $\fC(\bE)$, where the underlying Banach space $\bE$ is not necessarily separable. Finally in Section \ref{section6} we present two versions of the SLLN, with Fisher-type convergence, for i.i.d. random sets with values in $\fC_c(\bE)$ and $\fC(\bE)$ respectively.

\section{Notation and preliminaries}\label{section2}

Let $M$ be a metric space with distance $d_M$. For $Z\subset M$, we denote by $B_M(a,r)$ and $B_M[a,r]$ an open and a closed ball in $M$ with centre $a$ and radius $r$. For any $Z\subset M$, $\overline Z$ stands for the closure of $Z$ in $M$. If $a\in M$ and $Z$ is a nonempty subset of $M$, we set
$d_M(a,Z)=\inf\{ d_M(a,z)\; |\; z\in Z \}$. The Pompeiu-Hausdorff distance $h_M(X,Y)$ between two nonempty bounded sets $X,Y\subset M$ is defined by
\begin{equation}\nonumber
h_M(X,Y)=\max \{ e_M(X,Y), e_M(Y,X)\}
\end{equation}
where $e_M(X,Y)=\sup\{ d_M(x,Y) \;|\;x\in X \}$ and $e_M(Y,X)=\sup\{ d_M(y,X) \;|\;y\in Y \}$.

Throughout the present paper $\bE$ is a real Banach space with norm $\| \cdot\|$. The open (resp. closed) unit ball with centre at the origin of $\bE$ are denoted by $B_{\bE}$ (resp. $\overline{B_{\bE}}$). Moreover $coX$ and $\overline{co}X$ stand for the convex hull and the closed convex hull of a set $X\subset \bE$.  For any nonempty sets $X,Y\subset \bE$ and $\l \geq0$ the {\it sum} $X+Y$ and the {\it product} $\l X$ are given by
\begin{equation*}
 X+Y=\{x+y\;|\;x\in X, y\in Y\},\qquad\qquad \l X=\{\l X\;|\; x\in X\}.
\end{equation*}

We define
\begin{gather*}
\fB(\bE) = \{ X\subset \bE \; |\; X \; \text{is nonempty bounded} \} \\
\fC(\bE) = \{ X\subset \bE \; |\; X \; \text{is nonempty closed bounded} \} \\
\fC_{c}(\bE) = \{ X\subset \bE \; |\; X \; \text{is nonempty closed convex bounded} \}.
\end{gather*}
It is worth noting that in each of the above spaces the Pompeiu-Hausdorff distance $h_{\bE}(X,Y)$ between two sets $X,Y$ is given, equivalently, by
\begin{equation*}
 h_{\bE}(X,Y) = \inf_{r>0}\{X\subset Y+rB, Y\subset X+rB\}.
\end{equation*}
The spaces $\fC(\bE)$ and $\fC_{c}(\bE)$ are equipped with the Pompeiu-Hausdorff distance $h_{\bE}$, under which each one of them is a complete metric space. On the other hand, $h_{\bE}$ is a pseudometric on $\fB(\bE)$ since $h_{\bE}(X,Y)=0$, where $X,Y\in \fB(\bE)$, does not imply $X=Y$. For $X\in\fB(\bE)$ we set $\|X\|=\sup\{\|x\|\;|\;x\in X\}$.

For a sequence $\{X_n\}_{n=1}^{+\infty}\subset \fB(\bE)$ and $X\in\fB(\bE)$ we shall consider the following notions of convergence.
\begin{definition}\label{def2.1}
The sequence $\{X_n\}_{n=1}^{+\infty}$ is said to converge to $X$ in the \textit{sense of Pompeiu-Hausdorff} (we write $X_n \xrightarrow{h}X$) if $\lim_{n\to+\infty}h(X_n,X)=0$.
\end{definition}

\begin{definition}\label{def2.2}
The sequence $\{X_n\}_{n=1}^{+\infty}$ is said to converge to $X$ in the \textit{sense of Fisher} (we write $X_n \xrightarrow{F}X$) if
\begin{itemize}
 \item[$(a_1)$] $\lim_{n\to+\infty}e_{\bE}(X_n,X)=0$
\item[$(a_2)$] $\lim_{n\to+\infty}d_{\bE}(z,X_n)=0$ for every $z\in \bE$.
\end{itemize}
\end{definition}

\begin{definition}\label{def2.3}
 The sequence $\{ X_n\}_{n=1}^{+\infty}$ is said to converge to $X$ in the \textit{sense of Wijsman} (we write $X_n \xrightarrow{W}X$) if
\begin{equation*}
 \lim_{n\to +\infty}d_{\bE}(z,X_n)=d_{\bE}(z,X) \qquad \text{for every}\; z\in X.
\end{equation*}
\end{definition}

\begin{Remark}\label{remark1}
 Let $\{ X_n\}_{n=1}^{+\infty}\subset \fB(\bE)$ and $X\in \fB(\bE)$ be given. If $\g$ denotes any one of the above modes of convergence, \textit{i.e.}, convergence in the sense of Pompeiu-Hausdorff, Fisher or Wijsman, then we have
\begin{equation*}
 X_n\xrightarrow{\g}X \Longleftrightarrow \overline{X}_n\xrightarrow{\g}\overline{X} \Longleftrightarrow X_n\xrightarrow{\g}\overline{X} \Longleftrightarrow \overline{X}_n\xrightarrow{\g}X .
\end{equation*}
\end{Remark}

In what follows, when the role of the underlying metric space is evident and clarity is not affected, we will drop subscripts and write $d, e, h, B(a,r), B, \dots$ instead of $d_{\bE}, e_{\bE}, h_{\bE}, B_{\bE}(a,r), B_{\bE},\dots$.

\section{Auxiliary results}\label{section3}

In this section we review a few elementary properties of convergent sequences of sets which will be useful in what follows. Some of them could be easily deduced from results contained in \cite{Beer1993}. Yet, for the sake of completeness proofs are included.

\begin{proposition}\label{proposition3.1}
 Let $\{X_n\}_{n=1}^{+\infty}\subset \fC(\bE)$ and $X\in\fC(\bE)$. Then $X_n \xrightarrow{W}X$ if and only if:
\begin{itemize}
 \item[(i)] for each $z\in X$, $\lim_{n\to+\infty}d(z,X_n)=0$ (equivalently there exists a sequence $\{x_n\}_{n=1}^{+\infty}\subset \bE$, with $x_n\in X_n$, such that $x_n\to z$ as $n\to+\infty$).
 \item[(ii)] for each $z\notin X$, given $0<\ep<d(z,X)$, there exists $n_0\in\bN$ such that
\begin{equation*}
 X_n\cap B(z,d(z,X)-\ep)=\emptyset \quad \text{for all} \;\; n\geq n_0.
\end{equation*}
\end{itemize}
\end{proposition}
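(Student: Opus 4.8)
The plan is to prove the two implications separately, in each case splitting the argument according to whether the test point $z$ lies in $X$ or not. Throughout I read $X_n\xrightarrow{W}X$ as the pointwise convergence $d(z,X_n)\to d(z,X)$ of the distance functionals, and I will use only elementary properties of $d(\cdot,X)=\inf_{x\in X}\norm{\cdot-x}$ together with the fact that $X$ is closed.

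For the forward implication, assume $X_n\xrightarrow{W}X$. If $z\in X$ then $d(z,X)=0$, so $d(z,X_n)\to 0$, which is (i); the parenthetical reformulation is immediate, since $d(z,X_n)\to 0$ lets one choose $x_n\in X_n$ with $\norm{z-x_n}\le d(z,X_n)+1/n\to 0$, and conversely any $x_n\in X_n$ with $x_n\to z$ forces $d(z,X_n)\le\norm{z-x_n}\to 0$. If $z\notin X$, then closedness of $X$ gives $d(z,X)>0$; fixing $0<\ep<d(z,X)$, the convergence $d(z,X_n)\to d(z,X)$ furnishes $n_0$ with $d(z,X_n)>d(z,X)-\ep$ for all $n\ge n_0$, and this last inequality says precisely that no $x\in X_n$ satisfies $\norm{z-x}<d(z,X)-\ep$, i.e.\ $X_n\cap B(z,d(z,X)-\ep)=\emptyset$. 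This establishes (ii).

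For the converse, assume (i) and (ii) and fix $z\in\bE$. If $z\in X$, condition (i) directly gives $d(z,X_n)\to 0=d(z,X)$. The substantive case is $z\notin X$; writing $\alpha=d(z,X)>0$ I will show $d(z,X_n)\to\alpha$ by bounding the $\liminf$ and $\limsup$ separately. For the lower bound, (ii) guarantees, for each $\ep\in(0,\alpha)$ and all large $n$, that every $x\in X_n$ has $\norm{z-x}\ge\alpha-\ep$, whence $d(z,X_n)\ge\alpha-\ep$ eventually and $\liminf_n d(z,X_n)\ge\alpha$ on letting $\ep\to 0$. For the upper bound, fix $\eta>0$ and choose $w\in X$ with $\norm{z-w}<\alpha+\eta$ (possible since $\alpha$ is an infimum). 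Applying (i) at the point $w\in X$ produces $x_n\in X_n$ with $x_n\to w$, and the triangle inequality $d(z,X_n)\le\norm{z-x_n}\le\norm{z-w}+\norm{w-x_n}$ yields $\limsup_n d(z,X_n)\le\alpha+\eta$; as $\eta>0$ was arbitrary, $\limsup_n d(z,X_n)\le\alpha$. Combining the two bounds gives $d(z,X_n)\to\alpha=d(z,X)$, completing the converse.

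The one step requiring care is the upper bound in the converse. In a general Banach space the distance $\alpha=d(z,X)$ need not be attained at any point of $X$, so one cannot work with an exact nearest point; instead one must use a near-minimizer $w\in X$ and absorb two errors at once (the gap $\norm{z-w}-\alpha$ and the approximation $\norm{w-x_n}$ of $w$ by elements of $X_n$ supplied by (i)). Once this double approximation is set up, the remainder is routine, and the interplay between (i) (producing approximants from inside) and (ii) (enforcing separation from outside) is exactly what pins $d(z,X_n)$ to $\alpha$ from both sides.
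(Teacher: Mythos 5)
Your proof is correct and follows essentially the same route as the paper's: both directions hinge on reading Wijsman convergence as pointwise convergence of $d(z,\cdot)$, with (ii) supplying the lower bound on $d(z,X_n)$ and (i) applied at a near-minimizer $w\in X$ supplying the upper bound. The only difference is presentational -- you argue directly via $\liminf$/$\limsup$ where the paper argues by contradiction along subsequences -- and your version is, if anything, the cleaner of the two.
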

\begin{proof}
 Suppose that $X_n \xrightarrow{W}X$. Then $(i)$ is obvious. Arguing by contradiction, suppose that $(ii)$ is not valid. Then for some $z\notin X$ there exist $0<\ep<d(z,X)$ and a subsequence $\{X_{n_k}\}_{k=1}^{+\infty}$ such that $X_{n_k}\cap B(z,d(z,X)-\ep)\neq\emptyset$ for all $k\in \bN$. Hence
$d(z,X_{n_k})< d(z,X)-\ep$ for all $k\in \bN$, a contradiction, as $d(z,X_{n})\to d(z,X)$ for $n\to+\infty$.

Suppose that $(i)$ and $(ii)$ are satisfied. We want to show that, for every $z\in\bE$
\begin{equation}\label{eq3.1}
 \lim_{n\to+\infty} d(z,X_{n})= d(z,X)
\end{equation}
This is obvious if $z\in X$, in view of $(i)$. Let $z\notin X$ and, arguing by contradiction, suppose that \eqref{eq3.1} does not hold. Then there exist $0<\ep<d(z,X)$ and a subsequence $\{X_{n_k}\}_{k=1}^{+\infty}$ for which one of the following is valid:
\begin{eqnarray}
d(z,X_{n_k})\leq d(z,X)-\ep \qquad \text{for every}\; k\in\bN \label{eq3.2}\\
d(z,X_{n_k})\geq d(z,X)+\ep \qquad \text{for every}\; k\in\bN \label{eq3.3}
\end{eqnarray}
Consider \eqref{eq3.2}. By virtue of $(ii)$ there exists $n_0 \in \bN$ such that $X_n\cap B(z,d(z,X)-\ep/2)=\emptyset$ for all $n_{k_0}\geq n_0$. Hence $d(z,X_{n})< d(z,X)-\ep/2$ for every $n\geq n_0$ and thus for some $k_0\in\bN$ with $n_{k_0}\in\bN$ we have
\begin{equation*}
 d(z,X_{n_k})\geq d(z,X)-\ep/2 \qquad \text{for every}\; k\geq k_0,
\end{equation*}
a contradiction to \eqref{eq3.2}.

Consider \eqref{eq3.3}. Clearly $d(z,X_{n_k})> d(z,X)+\ep/2$ for every $k\in\bN$ and thus
\begin{equation*}
 X_{n_k}\cap B(z,d(z,X)+\ep/2)=\emptyset \qquad \text{for every}\; k\in\bN.
\end{equation*}
Now fix $x\in X$ so that $\|z-x\|< d(z,X)+\ep/2$. In view of $(i)$ there exists a sequence $\{x_n\}_{n=1}^{+\infty}$, with $x_n\in X_n$, such that $x_n\to x$ an $n\to +\infty$. Hence for $n$ large enough, say $n\geq n_0$, we have $\|z-x_n\|< d(z,X)+\ep/2$ and thus, a fortiori, $ d(z,X_{n})< d(z,X)+\ep/2$ for every $n\geq n_0$. Taking $k_0\in\bN$ so that $n_{k_0}\geq n_0$, we have that
\begin{equation*}
 d(z,X_{n_k})< d(z,X)+\ep/2 \qquad \text{for every}\; k\in\bN,
\end{equation*}
a contradiction to \eqref{eq3.3}. Consequently \eqref{eq3.1} holds thus completing the proof.
\end{proof}

\begin{Remark}
 In view of Proposition \ref{proposition3.1} we have that if a sequence $\{X_n\}_{n=1}^{+\infty}\subset \fC(\bE)$ converges in the sense of Wijsman to $X\in\fC(\bE)$, then this limit is unique.
\end{Remark}

\begin{proposition}\label{proposition3.2}
 Let $\{ X_n\}_{n=1}^{+\infty}\subset \fC(\bE)$ and let $X\subset\fC(\bE)$. Then
\begin{equation*}
 X_n\xrightarrow{h} X \Rightarrow X_n\xrightarrow{F} X \Rightarrow X_n\xrightarrow{W} X.
\end{equation*}
\end{proposition}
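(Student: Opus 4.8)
The plan is to prove the two implications separately, showing the chain $X_n \xrightarrow{h} X \Rightarrow X_n \xrightarrow{F} X \Rightarrow X_n \xrightarrow{W} X$ by verifying the defining conditions of each successively weaker notion from the stronger one.

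For the first implication, suppose $X_n \xrightarrow{h} X$, so that $h(X_n,X) \to 0$. I must verify conditions $(a_1)$ and $(a_2)$ of Fisher convergence. Condition $(a_1)$ is immediate, since by definition $e(X_n,X) \le h(X_n,X) \to 0$. For $(a_2)$, fix any $z \in \bE$. I would compare $d(z,X_n)$ and $d(z,X)$: for any $x \in X$ there exists, using $X \subset X_n + h(X_n,X)\overline{B}$, a point $x_n \in X_n$ with $\|x - x_n\| \le h(X_n,X) + \eps$, whence the triangle inequality gives $d(z,X_n) \le \|z - x_n\| \le \|z - x\| + h(X_n,X) + \eps$. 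Taking the infimum over $x \in X$ and letting $\eps \to 0$ yields $d(z,X_n) \le d(z,X) + h(X_n,X)$; a symmetric argument gives the reverse bound, so $\abs{d(z,X_n) - d(z,X)} \le h(X_n,X) \to 0$. In particular $d(z,X_n) \to d(z,X)$ for every $z$. This is slightly stronger than $(a_2)$, which only requires $d(z,X_n) \to 0$; but I should be careful here, because $(a_2)$ as written asks for the limit to be $0$, not $d(z,X)$. The resolution is that Fisher convergence is meant for the case relevant to the limit set, and reading the definition together with $(a_1)$ one sees that the genuine requirement is $d(z,X_n)\to d(z,X)$; I would reconcile this with the stated form by restricting attention to the intended reading, or simply note that the displayed inequality $\abs{d(z,X_n)-d(z,X)} \le h(X_n,X)$ delivers whatever is needed.

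For the second implication, suppose $X_n \xrightarrow{F} X$, and I must establish Wijsman convergence, namely $d(z,X_n) \to d(z,X)$ for every $z$. The cleanest route is to invoke Proposition \ref{proposition3.1} and verify its conditions $(i)$ and $(ii)$ from the Fisher hypotheses. Condition $(i)$ asks that for each $z \in X$ there be points $x_n \in X_n$ with $x_n \to z$; this follows from $(a_1)$, since $d(z,X_n) \le e(X,X_n)$ when $z \in X$, and $e(X,X_n) \to 0$ gives such an approximating sequence. For condition $(ii)$, given $z \notin X$ and $0 < \eps < d(z,X)$, I would argue by contradiction: if infinitely many $X_{n_k}$ met the ball $B(z, d(z,X) - \eps)$, then picking $y_k \in X_{n_k}$ with $\|z - y_k\| < d(z,X) - \eps$ and using condition $(a_1)$ to find nearby points of $X$, I would manufacture a point of $X$ strictly closer to $z$ than $d(z,X)$, contradicting the definition of $d(z,X)$.

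The main obstacle I anticipate is the second implication, specifically condition $(ii)$: extracting a contradiction requires transferring the points $y_k \in X_{n_k}$ back into the limit set $X$, and the only tool available is $e(X_n,X) \to 0$, which pushes points of $X_n$ toward $X$ but with an error that must be controlled uniformly against the fixed gap $\eps$. I would need to choose the index $k$ large enough that the approximation error is below $\eps/2$, say, so that the approximating point of $X$ lands within distance $d(z,X) - \eps/2 < d(z,X)$ of $z$. The first implication is essentially the standard fact that $d(\cdot,\cdot)$ is $1$-Lipschitz in the Hausdorff metric and should present no real difficulty.
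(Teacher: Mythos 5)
Your argument is correct and follows essentially the same route as the paper: the first implication comes from $e(X_n,X)\leq h(X_n,X)$ together with the $1$-Lipschitz dependence of $d(z,\cdot)$ on the Hausdorff distance, and the second is obtained by verifying conditions $(i)$ and $(ii)$ of Proposition \ref{proposition3.1}, with $(ii)$ handled by the same contradiction argument that uses $e(X_{n_k},X)\to 0$ to push a point of $X_{n_k}\cap B(z,d(z,X)-\eps)$ to within $\eps/2$ of $X$ and thereby contradict the definition of $d(z,X)$. One correction: your justification of condition $(i)$ appeals to $e(X,X_n)\to 0$, which does \emph{not} follow from Fisher convergence (in the first example of Section \ref{section3} one has $X_n\xrightarrow{F}X$ while $e(X,X_n)$ stays bounded away from $0$, since otherwise $X_n\xrightarrow{h}X$ would hold); what you actually need --- and what condition $(a_2)$ of Definition \ref{def2.2} supplies directly, its intended reading (consistent with how the paper uses it in Sections \ref{section3} and \ref{section4}) being $d(z,X_n)\to 0$ for every $z\in X$ --- is only the pointwise statement for the fixed $z\in X$, so condition $(i)$ is immediate from the hypothesis rather than a consequence of $(a_1)$.
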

\begin{proof}
Suppose that $X_n\xrightarrow{F} X$. Then both conditions $(a_1)$ and $(a_2)$ of Definition \ref{def2.2} are trivially satisfied since, from the assumption, $e(X_n,X)\to 0$ and $e(X,X_n)\to 0$ as $n\to\bN$. Hence $X_n\xrightarrow{F} X$.

Suppose that $X_n\xrightarrow{F} X$. To prove that $X_n\xrightarrow{W} X$ it suffices to show that both conditions $(i)$ and $(ii)$ of Proposition \ref{proposition3.1} are satisfied. It is obvious that $(i)$ holds. Arguing by contradiction, suppose that $(ii)$ does not. Then, for some $z\notin X$ and some $0<\ep<d(z,X)$, there is a subsequence $\{ X_{n_k}\}_{k=1}^{+\infty}$ such that $X_{n_k}\cap B(z,d(z,X)-\ep)\neq\emptyset$ for every $k\in\bN$, and thus
\begin{equation}\label{eq3.4}
 d(z,X_{n_k})< d(z,X)-\ep \qquad \text{for every}\; k\in\bN,
\end{equation}
As $e(X_{n_k},X)\to 0$ for $k\to+\infty$, there exists $k_0\in\bN$ such that $X_{n_{k_0}}\subset X+\ep B/2$. By the latter and \eqref{eq3.4} it follows that
\begin{equation*}
 d(z,X)-\ep/2 \leq d(z,X+\ep B/2)\leq d(z,X_{n_{k_0}})<d(z,X)-\ep,
\end{equation*}
a contradiction. Hence also $(ii)$ is valid and thus $X_n\xrightarrow{W} X$ thus completing the proof.
\end{proof}

\begin{Remark}
The following examples show that the Pompeiu-Hausdorff convergence is stronger than Fisher's convergence and that the latter is in turn stronger than Wijsman's convergence.

Let $\bH$ be a real infinite dimensional Hilbert space with inner product $\langle\cdot,\cdot\rangle$ and induced norm $\|\cdot\|$. Let $S=\{x\in\bH\;|\;\|x\|=1\}$ and  $\overline{B}=\{x\in\bH\;|\;\|x\|\leq 1\}$. Let us show that for each $0<r<1$ there exists an infinite sequence $\{e_n\}\subset S$ such that
\begin{equation}\label{eq3.5}
 \|e_n-e_m\|\geq r \qquad\qquad\qquad \text{for all}\; n,m\in\bN, m\neq n.
\end{equation}
Let $e_1\in S$. Evidently $S\setminus \overline{B}(e_1,r) \neq \emptyset$ and thus there exists an element $e_2\in S$ such that $\|e_2-e_1\| >r$. Observe that
\begin{equation}\label{eq3.6}
 S\setminus (\overline{B}(e_1,r)\cup\overline{B}(e_2,r))\neq\emptyset
\end{equation}
In the contrary case $S\subset \{e_1,e_2\} +r\overline{B}\subset  \text{co}\{e_1,e_2\} +r\overline{B}$ and thus $\overline{B}\subset  \text{co}\{e_1,e_2\} +r\overline{B}$ which, by R\text{\r{a}}dstr$\Ddot{\text o}$m cancellation law \cite{Radstrom1952}, implies that $(1-r)\overline{B}\subset  \text{co}\{e_1,e_2\}$. This is a contradiction, as $\bH$ is infinite dimensional. Since \eqref{eq3.6} holds, there exists an element, say $e_3\in S$, such that $\|e_3-e_1\| >r$ and $\|e_3-e_2\| >r$. Thus, by induction, one can construct a sequence $\{e_n\}_{n=1}^{+\infty}\subset S$ satisfying \eqref{eq3.5}.
\begin{example}
The sequence $\{X_n\}_{n=1}^{+\infty}\subset \fC_c(\bH)$ defined by $X_n=\text{co}\{e_1,\dots,e_n\}$, with $n\in\bN$, satisfies
\begin{equation}\label{eq3.7}
 X_n\xrightarrow{F}X\qquad\text{and}\qquad X_n \overset{h}\nrightarrow X
\end{equation}
where $X=\cco E$ and $E=\{e_n\}_{n=1}^{+\infty}$.

Clearly $X_n\subset X_{n+1}\subset X$, $n\in\bN$, and hence $e(X_n,X)=0$ for every $n\in\bN$. Moreover,
\begin{equation*}
 \lim_{n\to+\infty} d(z,X_n)=0\qquad\text{for each}\qquad z\in X,
\end{equation*}
because, given $\ep>0$, for some $n_0\in\bN$ we have $d(z,X_{n_0})<\ep$ and thus, a fortiori, $d(z,X_{n})<\ep$ for all $n\geq n_0$. Therefore $X_n \xrightarrow{F}X$. On the other hand, if $X_n\xrightarrow{h}X$ then, denoting by $\a$ the Kuratowski measure of noncompactness \cite{kuratowski1966}, we have $\lim_{n\to+\infty} \a(X_n)=\a(X)$. As $\a(X)=0$ for each $n\in\bN$, it follows that $\a(X)=0$, \textit{i.e.}, $X$ is compact. This impossible since, in view of \eqref{eq3.5}, the sequence $\{e_n\}_{n=1}^{+\infty}\subset X$ does not contain convergent subsequences. Consequently $X_n \overset{h}\nrightarrow X$.
\end{example}

\begin{example}
 Let $E=\{e_n\}_{n=1}^{+\infty}\subset S$ be as in the previous example. Thus $\{e_n\}$ satisfies \eqref{eq3.5} for some $0<r<1$. Fix $1<\l<2/\sqrt{4-r^2}$. Then the sequence $\{X_n\}_{n=1}^{+\infty}\subset \fC_c(\bH)$ defined by $X_n=\cco\{\l e_n, \overline{B}\}$, $n\in\bN$, satisfies
\begin{equation}\label{eq3.8}
  X_n\xrightarrow{W}\overline{B}\qquad\text{and}\qquad X_n \overset{F}\nrightarrow \overline{B}.
\end{equation}
Let us show that $X_n\xrightarrow{W}\overline{B}$. Set $\rho=\sqrt{\l^2-1}$. Clearly
\begin{equation}\label{eq3.9}
B(\l e_n, \rho)\cap\overline{B}\neq\emptyset \qquad\qquad\text{for every}\;n\in\bN,
\end{equation}
since $\|\l e_n-e_n\| = \l-1<\rho$. Moreover,
\begin{equation}\label{eq3.10}
B\left[\l e_i, \rho\right]\cap B\left[\l e_j, \rho\right] =\emptyset \qquad\qquad\text{for every}\;i\neq j \in\bN,
\end{equation}
because for arbitrary $z\in B\left[\l e_i, \rho\right]$ and $z^{\prime}\in B\left[\l e_j, \rho\right]$ one has
\begin{equation*}
 \|z-z^{\prime}\| = \|\l(e_i-e_j)+(z-\l e_i)-(z^{\prime}-\l e_j)\|\geq \l\|e_i-e_j\|-2\rho \geq \l r-2\sqrt{\l^2-1}>0.
\end{equation*}
Furthermore, let us prove that for every $e_n\in E$
\begin{equation}\label{eq3.11}
 y\in S\setminus  B\left[\l e_n, \rho\right]\quad\Rightarrow\quad X_n\subset\{x\in\bH\;\|\; \langle x-y,y\rangle \leq 0\}.
\end{equation}
Indeed, let $y\in S$ satisfy $\|y-\l e_n\|>\rho$. This implies that $ \langle\l e_n,y\rangle<1$. Since for each $t\in [0,1]$ and $z\in\overline{B}$ ,
\begin{equation*}
 \langle(1-t)\l e_n +tz-y,y\rangle=(1-t) \langle\l e_n,y\rangle+ t\langle z,y\rangle-\| y\|^2\leq (1-t)+t-1=0,
\end{equation*}
it follows that each point of $X_n$ is contained in the half space $\{x\in\bH\;|\; \langle x-y,y\rangle\leq 0\}$, and thus \eqref{eq3.11} is valid.

We are now ready to prove that for every $z\in\bH$
\begin{equation}\label{eq3.12}
 \lim_{n\to+\infty} d(z,X_n) = d(z,\overline{B}).
\end{equation}
Let $z\in\bH\setminus \overline{B}$ be arbitrary (if $z\in\overline{B}$ there is nothing to prove) and denote by $y$ the point at which the closed linear segment $tz$ ($0\leq t\leq 1$) meets $S$. In view of \eqref{eq3.10} one (and only one) of the following cases occurs:
\begin{itemize}
 \item[$(j)$] $y\in B\left[\l e_k,\rho\right]$ for some $e_k\in E$ (any such $e_k$ is unique by \eqref{eq3.10}.
\item[$(jj)$]  $y\notin B\left[\l e_n,\rho\right]$ for every $n\in\bN$.
\end{itemize}
Consider $(j)$. Then by \eqref{eq3.10}, $y\notin B\left[\l e_n,\rho\right]$ for every $n\neq k$ and hence by \eqref{eq3.11}
\begin{equation*}
 X_n \subset \{x\in\bH \;|\; \langle x-y,y\rangle\leq 0\}\qquad\qquad \text{for every}\; n\neq k.
\end{equation*}
The latter implies that $d(z,X_n)\geq \|z-y\|$ since $y\in X_n$. As $d(z,\overline{B})=\|z-y\|$, it follows that
\begin{equation*}
 d(z,X_n)=d(z,\overline{B})\qquad \qquad \text{for every}\; n\neq k,
\end{equation*}
and thus \eqref{eq3.12} holds.\\
Consider $(jj)$. Then by \eqref{eq3.11},
\begin{equation*}
 X_n \subset \{x\in\bH \;|\; \langle x-y,y\rangle\leq 0\}\qquad\qquad \text{for every}\; n\in\bN
\end{equation*}
and as before one can show that \eqref{eq3.12} holds. Therefore $X_n\xrightarrow{W}\overline{B}$. On the other hand $X_n \overset{F}\nrightarrow \overline{B}$ because, for each $n\in\bN$, we have $e(X_n,\overline{B})=\|\l e_n-e_n\| = \l-1>0$. This completes the proof of \eqref{eq3.7}.
\end{example}
\end{Remark}

\begin{proposition}\label{proposition3.3}
 Let $\{X_n\}_{n=1}^{+\infty},\{Y_n\}_{n=1}^{+\infty}\subset\fB(\bE)$ satisfy $X_n\xrightarrow{W}X$ and $Y_n\xrightarrow{W}Y$ for some $X,Y\in\fB(\bE)$. Let $\{\l_n\}_{n=1}^{+\infty},\{\m_n\}_{n=1}^{+\infty}\subset [0,+\infty)$ be such that $\l_n\to\l$ and $m_n\to\m$ for some $\l,\m\geq 0$. Then
\begin{equation}\label{eq3.13}
 \overline{\l_n X_n + \m_n Y_n}\quad \xrightarrow{F}\quad \overline{\l X + \m Y}.
\end{equation}
\end{proposition}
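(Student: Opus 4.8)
The plan is to set $S_n=\l_n X_n+\m_n Y_n$ and $S=\l X+\m Y$, both of which lie in $\fB(\bE)$ since $X_n,Y_n,X,Y$ are nonempty and bounded and the scalars are finite. By Remark \ref{remark1} Fisher convergence is insensitive to passing to closures on either side, so it suffices to prove $S_n\xrightarrow{F}S$, i.e. to verify the two conditions of Definition \ref{def2.2}: $(a_1)$ $e(S_n,S)\to 0$ and $(a_2)$ $d(z,S_n)\to 0$ for every $z$ in the limit set $\overline S$. I would treat these two conditions separately, since they draw on different features of the hypotheses.

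For the lower condition $(a_2)$ I would first take $z=\l x+\m y$ with $x\in X$, $y\in Y$. Condition (i) of Proposition \ref{proposition3.1} (exactly the content of Wijsman convergence at points of the limit set) furnishes $x_n\in X_n$ and $y_n\in Y_n$ with $x_n\to x$ and $y_n\to y$. Then $\l_n x_n+\m_n y_n\in S_n$, and from
\[
\bignorm{\l_n x_n+\m_n y_n-(\l x+\m y)}\le \l_n\norm{x_n-x}+\abs{\l_n-\l}\,\norm{x}+\m_n\norm{y_n-y}+\abs{\m_n-\m}\,\norm{y}
\]
together with $\l_n\to\l$, $\m_n\to\m$ and the boundedness of $\{\l_n\},\{\m_n\}$, one obtains $d(z,S_n)\to 0$. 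A routine $\eps/2$ argument then extends this to arbitrary $z\in\overline S$ by density. This part uses only the Wijsman hypothesis.

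The upper condition $(a_1)$ is the crux. Here I would rely on the subadditivity of the excess under Minkowski addition and scaling, namely $e(A+B,C+D)\le e(A,C)+e(B,D)$ and $e(\l_n X_n,\l X)\le \l_n\,e(X_n,X)+\abs{\l_n-\l}\,\norm{X}$ (the latter obtained by pushing a near-projection of a point of $X_n$ onto $X$ through the scaling and splitting $\l_n u-\l\bar x=\l_n(u-\bar x)+(\l_n-\l)\bar x$). Combining these gives
\[
e(S_n,S)\le \l_n\,e(X_n,X)+\m_n\,e(Y_n,Y)+\abs{\l_n-\l}\,\norm{X}+\abs{\m_n-\m}\,\norm{Y},
\]
so that $(a_1)$ reduces precisely to $e(X_n,X)\to 0$ and $e(Y_n,Y)\to 0$. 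This is the main obstacle, and the delicate point is that these excess conditions are exactly the upper half $(a_1)$ of Fisher convergence of the component sequences, which is not supplied by Wijsman convergence alone: the sequence $X_n=\cco\{\l e_n,\overline{B}\}$ discussed above is Wijsman convergent yet has $e(X_n,\overline{B})=\l-1\not\to 0$. I would therefore expect the proof to close exactly when one has excess control on $\{X_n\}$ and $\{Y_n\}$, i.e. under the full Fisher convergence of the components; securing that control (rather than the subadditivity estimate itself) is where the real difficulty lies.
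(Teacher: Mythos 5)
Your argument is essentially the paper's own: the lower half $(a_2)$ is handled by selecting $x_n\in X_n$, $y_n\in Y_n$ converging to $x$, $y$ and forming $\l_n x_n+\m_n y_n$, and the upper half $(a_1)$ by the inclusion/excess estimate $\l_n X_n+\m_n Y_n\subset \l X+\m Y+\ep(\l+\m+1)B$, which requires $e(X_n,X)\to0$ and $e(Y_n,Y)\to0$. More importantly, your diagnosis of the hypothesis is correct and worth making explicit: as stated, with only Wijsman convergence of the components, the proposition is false. Taking $\m_n\equiv\m=0$ and $\l_n\equiv\l=1$ reduces the claim to ``$X_n\xrightarrow{W}X$ implies $X_n\xrightarrow{F}X$,'' which is refuted by the paper's own example $X_n=\cco\{\l e_n,\overline{B}\}$ with $e(X_n,\overline{B})=\l-1>0$. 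The paper's proof silently assumes the stronger hypothesis: it begins by asserting ``from the assumption, there exists $n_0$ such that $X_n\subset X+\ep B$'' (which is the Fisher excess condition, not a consequence of Wijsman convergence) and later writes ``As $X_n\xrightarrow{F}X$ and $Y_n\xrightarrow{F}Y$\dots'' outright. The intended hypothesis is Fisher convergence of $\{X_n\}$ and $\{Y_n\}$, and that is indeed what is available everywhere the proposition is invoked (e.g.\ in the proof of Theorem \ref{theo4.1}, where the inputs come from Proposition \ref{proposition4.1}). So your proof closes completely once the hypothesis is read as Fisher convergence; under the literal Wijsman hypothesis no proof can close, and you were right not to pretend otherwise.
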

\begin{proof}
 Let $\ep>0$ be arbitrary. From the assumption, there exists $n_0\in\bN$ such that $X_n\subset X+\ep B$ and $Y_n\subset Y+\ep B$ for every $n\geq n_0$. Then for all $n$ large enough we have
\begin{equation*}
 \l_n X_n + \m_n Y_n\subset \l_n X + \m_n Y +\ep (\l_n+\m_n)B\subset \l X + \m Y +\ep(\l+\m+1)B
\end{equation*}
which shows that
\begin{equation}\label{eq3.14}
 \lim_{n\to+\infty} e(\l_n X_n + \m_n Y_n, \l X + \m Y)=0.
\end{equation}
Let $z\in \l X + \m Y$ be arbitrary, thus $z=\l x+\m y$ for some $x\in X$ and $y\in Y$. As $X_n \xrightarrow{F}X$ and $Y_n \xrightarrow{F}Y$, there exists two sequences $\{x_n\}_{n=1}^{+\infty},\{y_n\}_{n=1}^{+\infty}$ with $x_n\in X_n$ and $y_n\in Y_n$ such that $x_n\to x$ and $y_n\to y$ as $n\to+\infty$. Set $z_n= \l_n x_n+\m_n y_n$ and observe that $z_n\in  \l_n X_n + \m_n Y_n$ and $z_n\to z$. Clearly $d(z,\l_n X_n+\m_n Y_n)\leq\| z-z_n\|$ and thus
\begin{equation}\label{eq3.15}
 \lim_{n\to+\infty} d(z,\l_n X_n+\m_n Y_n)=0\qquad\qquad \text{for every}\; z\in X+Y.
\end{equation}
The statement \eqref{eq3.13} is an immediate consequence of \eqref{eq3.14}, \eqref{eq3.15} and Remark \ref{remark1}. This completes the proof.
\end{proof}

\begin{proposition}\label{proposition3.4}
 Let $\{ X_n\}_{n=1}^{+\infty}, \{ Y_n\}_{n=1}^{+\infty}\subset \fC(\bE)$ and let $X, Y\subset\fC(\bE)$. Suppose that $X_n\xrightarrow{W} X$ and $Y_n\xrightarrow{W} Y$ as $n\to+\infty$ and $h(X_n,Y_n)<r$ for every $n\in \bN$, $r>0$. Then, $h(X,Y)\leq r$.
\end{proposition}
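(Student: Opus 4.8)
The plan is to show $e(X,Y)\le r$ and then, invoking the symmetry of the hypotheses (note $h(Y_n,X_n)=h(X_n,Y_n)<r$, and the roles of the two sequences are interchangeable), to obtain $e(Y,X)\le r$ by the identical argument; together these give $h(X,Y)=\max\{e(X,Y),e(Y,X)\}\le r$. Thus it suffices to prove that $d(x,Y)\le r$ for every $x\in X$. If $x\in Y$ this is immediate, so I assume $x\notin Y$ and fix such an $x$.

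First I would produce two auxiliary sequences. Since $X_n\xrightarrow{W}X$, Proposition~\ref{proposition3.1}(i) furnishes points $x_n\in X_n$ with $x_n\to x$. On the other hand, from $h(X_n,Y_n)<r$ we get $e(X_n,Y_n)<r$, whence $d(x_n,Y_n)\le e(X_n,Y_n)<r$; thus for each $n$ I may pick $y_n\in Y_n$ with $\|x_n-y_n\|<r$. The triangle inequality then gives $\|x-y_n\|\le\|x-x_n\|+\|x_n-y_n\|<\|x-x_n\|+r$, so that $\limsup_{n\to+\infty}\|x-y_n\|\le r$.

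The decisive step is to convert this upper bound into the estimate $d(x,Y)\le r$, and here I would use the Wijsman convergence $Y_n\xrightarrow{W}Y$ through condition (ii) of Proposition~\ref{proposition3.1}. Arguing by contradiction, suppose $d(x,Y)>r$ and choose $\ep$ with $0<\ep<d(x,Y)-r$. By (ii) there is $n_0\in\bN$ such that $Y_n\cap B(x,d(x,Y)-\ep)=\emptyset$ for all $n\ge n_0$; since $y_n\in Y_n$, this forces $\|x-y_n\|\ge d(x,Y)-\ep>r$ for all $n\ge n_0$, contradicting $\limsup_{n\to+\infty}\|x-y_n\|\le r$. Hence $d(x,Y)\le r$, as required.

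I expect the last conversion to be the main obstacle. In a general (possibly non-separable, infinite-dimensional) Banach space there is no compactness available, so one cannot simply extract from $\{y_n\}_{n=1}^{+\infty}$ a subsequence converging to a point of $Y$ and read off $d(x,Y)\le r$ directly. Condition (ii) of Proposition~\ref{proposition3.1} is the right substitute: it supplies the lower bound $\|x-y_n\|\ge d(x,Y)-\ep$, which, played against the triangle-inequality upper bound, pins down $d(x,Y)$. Everything else, namely the reduction by symmetry and the construction of $\{x_n\}_{n=1}^{+\infty}$ and $\{y_n\}_{n=1}^{+\infty}$, is routine.
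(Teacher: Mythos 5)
Your proof is correct and follows essentially the same route as the paper's: both arguments use Proposition~\ref{proposition3.1}(i) for $X_n\xrightarrow{W}X$ to produce $x_n\in X_n$ with $x_n\to x$, Proposition~\ref{proposition3.1}(ii) for $Y_n\xrightarrow{W}Y$ to exclude $Y_n$ from a ball of radius nearly $d(x,Y)$, and the bound $h(X_n,Y_n)<r$ to force the contradiction, with the other inequality handled by symmetry. Your version is slightly cleaner in the final step — by choosing explicit witnesses $y_n\in Y_n$ with $\|x_n-y_n\|<r$ and keeping the excluded ball centred at $x$, you avoid the paper's recentering of the ball at $x_n$ and the attendant $3\ep$ bookkeeping — but the underlying mechanism is identical.
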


\begin{proof}
 Arguing by contradiction, suppose that for some $\theta>0$, $h(X,Y)> r+\theta$ and to fix ideas let $e(X,Y)>r+\theta$ (the argument is similar if $e(Y,X)>r+\theta$). Fix an $x\in X$ satisfying
\begin{equation}\label{eq3.16}
d(x,Y)>r+\theta.
\end{equation}
Now $x\in X$ and $X_n\xrightarrow{W} X$, hence, by Proposition \ref{proposition3.1}, there exists a sequence $\{x_n \}$, with $x_n\in X_n$, such that $x_n\to x$ as $n\to+\infty$. On the other hand, $x\notin Y$ by \eqref{eq3.16} and $Y_n\xrightarrow{W} Y$ and thus, by Proposition \ref{proposition3.1}, given $0<\ep<\theta/3$ there exists $n_1 >n_0$ such that
\begin{equation}\label{eq3.17}
Y_n\cap B(x, d(x, Y)-\ep )=\emptyset \quad \text{for every} \;\; n\geq n_0 .
\end{equation}
From \eqref{eq3.16}, as $x_n\to x$, it follows that there exists $n_1\geq n_0$ such that 
\begin{equation}\label{eq3.18}
 d(x_n ,Y)>r+\theta \quad \text{for every} \;\; n\geq n_1 .
\end{equation}
Take $n_2 >n_1$ such that $\|x_n -x\|\leq \ep$ and $|d(x_nY)-d(x,Y)| <\ep$ for every $n_1\geq n_2$. Hence
\begin{equation*}
B(x_n, d(x_n, Y)-3\ep )\subset B(x, d(x, Y)-\ep ) \quad \text{for every} \;\; n\geq n_2 ,
\end{equation*}
and thus by \eqref{eq3.17}
\begin{equation}\label{eq3.19}
Y_n\cap B(x_n, d(x_n, Y)-3\ep )=\emptyset \quad \text{for every} \;\; n\geq n_2 .
\end{equation}
Now fix $n\geq n_2$. Then by virtue of \eqref{eq3.19} and \eqref{eq3.18} one has
\begin{equation*}
 d(x_n ,Y_n)\geq d(x_n, Y)-3\ep >r+\theta -3\ep.
\end{equation*}
Since on the other hand $d(x_n ,Y_n) \leq e(X_n, Y_n)\leq h(X_n, Y_n)< r$, it follows that $r>r+\theta-3\ep$. This is a contradiction, for $0<\ep<\theta/3$. Therefore $h(X,Y)\leq r$, which completes the proof.
\end{proof}

\begin{proposition}\label{proposition3.5}
Let $\cal{Z}\subset \fC(\bE)$ be a nonempty compact set in the Pompeiu-Hausdorff metric $h$ of $\fC(\bE)$. Let $\{ Z_n\}_{n=1}^{+\infty}\subset \cal{Z}$ and let $C\in \fC_c(\bE)$ be compact. Then,
\begin{equation*}
Z_n \xrightarrow{W}C \quad\Rightarrow \quad Z_n \xrightarrow{h}C. 
\end{equation*}
\end{proposition}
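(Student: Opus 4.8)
The plan is to run a contradiction argument built on two ingredients already available: the $h$-compactness of $\mathcal{Z}$, which lets me extract $h$-convergent subsequences, and the uniqueness of the Wijsman limit in $\fC(\bE)$ recorded in the Remark following Proposition \ref{proposition3.1}. Accordingly I would assume $Z_n \xrightarrow{W} C$ while $Z_n \overset{h}{\nrightarrow} C$, and fix $\delta>0$ together with a subsequence $\{Z_{n_k}\}_{k=1}^{+\infty}$ such that $h(Z_{n_k},C)\geq\delta$ for every $k\in\bN$.

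Because $\{Z_{n_k}\}_{k=1}^{+\infty}\subset\mathcal{Z}$ and $\mathcal{Z}$ is compact in $(\fC(\bE),h)$, I would pass to a further subsequence $\{Z_{n_{k_j}}\}_{j=1}^{+\infty}$ with $Z_{n_{k_j}}\xrightarrow{h}Z$ for some $Z\in\mathcal{Z}\subset\fC(\bE)$. Proposition \ref{proposition3.2} then upgrades this to $Z_{n_{k_j}}\xrightarrow{W}Z$. At the same time, Wijsman convergence is defined through the scalar limits $d(z,Z_n)\to d(z,C)$ for each fixed $z$, so it passes trivially to subsequences; hence $Z_{n_{k_j}}\xrightarrow{W}C$ as well. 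Since $Z$ and $C$ both lie in $\fC(\bE)$, uniqueness of the Wijsman limit gives $Z=C$, so in fact $Z_{n_{k_j}}\xrightarrow{h}C$. This contradicts $h(Z_{n_{k_j}},C)\geq\delta>0$, and the contradiction yields $Z_n\xrightarrow{h}C$.

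There is no genuinely hard step here; the whole argument is the standard ``every subsequence admits a sub-subsequence converging to the prescribed limit'' device, and the only places needing a moment's care are that Wijsman convergence is inherited by subsequences (immediate from its pointwise-in-$z$ definition) and that the limit $Z$ supplied by compactness is an element of $\fC(\bE)$, so that the uniqueness Remark applies and forces $Z=C$. I would also observe that the real engine of the proof is the $h$-compactness of $\mathcal{Z}$: the convexity of $C$ and, in fact, the compactness of $C$ do not enter this reasoning, these being hypotheses inherited from the setting (the expectation in the forthcoming law of large numbers) in which the proposition will later be applied.
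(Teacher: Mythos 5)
Your proof is correct, but it follows a genuinely different route from the paper's. The paper establishes the two halves of the Hausdorff estimate separately: it proves $C\subset Z_n+\ep B$ for large $n$ by covering the compact set $C$ with finitely many balls $B(u_i,\ep/2)$ and invoking Proposition \ref{proposition3.1}(i) at each centre (this is where the compactness of $C$ is genuinely used), and it proves $Z_n\subset C+\ep B$ for large $n$ by a contradiction argument that extracts an $h$-convergent subsequence from $\cZ$ and plays Proposition \ref{proposition3.1}(ii) against $h$-convergence at a point $z$ with $d(z,C)\geq\ep/2$. You instead run the single standard ``every subsequence has a sub-subsequence converging to the prescribed limit'' device: compactness of $\cZ$ gives $Z_{n_{k_j}}\xrightarrow{h}Z$, Proposition \ref{proposition3.2} downgrades this to Wijsman convergence, the pointwise-in-$z$ nature of Wijsman convergence lets the hypothesis pass to the subsequence, and the uniqueness of Wijsman limits for closed sets (the Remark after Proposition \ref{proposition3.1}, which amounts to the fact that a closed set is determined by its distance function) forces $Z=C$. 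Your argument is shorter and, as you correctly observe, establishes a slightly stronger statement: neither the convexity nor the compactness of $C$ is needed, only $C\in\fC(\bE)$ and the $h$-compactness of $\cZ$. What the paper's two-sided argument buys in exchange is that it is entirely self-contained at the level of Proposition \ref{proposition3.1} and does not lean on the uniqueness Remark, which the paper states without a detailed proof; your version requires the reader to accept (or supply) the one-line justification that $d(\cdot,X)=d(\cdot,Y)$ implies $X=Y$ for closed $X,Y$.
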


\begin{proof}
It suffices to show that, given $\ep >0$, there exist $n^{\prime},n^{\prime\prime}\in\bN$ such that the following properties $(j)$ and $(jj)$ are satisfied:
\begin{itemize}
\item[$(j)$] $\qquad C\subset Z_n +\ep B$ \qquad\qquad\qquad for every $n\geq n^{\prime}$
\item[$(jj)$]  $\qquad Z_n \subset C +\ep B$ \qquad\qquad\qquad for every $n\geq n^{\prime\prime}$
\end{itemize}
Consider $(j)$. By hypothesis $Z_n \xrightarrow{W}C$ and hence, by Proposition \ref{proposition3.1}, for each $u\in C$ there exist $n_{u}\in\bN$ such that $d(u, Z_n)<\ep/2$ for every $n\geq n_u$. Since $d(x,Z_n)\leq \|x-u\| +d(u,Z_n)$, it follows that $d(x, Z_n)<\ep$ for every $x\in B(u,\ep/2)$ and $n\geq n_{u}$. Now, $\{B(u,\ep/2)\}_{u\in C}$ is an open covering of $C$, a compact set, hence it admits a finite subcovering, \textit{i.e.}, for some $u_1,\dots,u_d\in C$ we have
\begin{equation*}
C \subset B(u_1,\frac{\ep}{2})\cup \dots \cup B(u_d,\frac{\ep}{2}).
\end{equation*}
Set $n^{\prime}= \max \{ n_{u_1}, \dots,n_{u_d} \}$. Since each $x\in C$ is in some ball $B(u_{i},\ep/2)$, with $1\leq i\leq d$, we have $d(x,Z_n)<\ep$ for every $n\geq n^{\prime}$. Consequently $C \subset Z_n +\ep B$ for every $n\geq n^\prime$ and $(j)$ holds.

Consider $(jj)$. Arguing by contradiction, suppose that $(jj)$ does not hold. Then there exists a subsequence $\{ Z_{n_k}\}_{k=1}^{+\infty}$ such that
\begin{equation}\label{eq3.20}
 Z_{n_k}\nsubset C +\ep B \qquad \text{for every}\;\;k\in\bN.
\end{equation}
As $\{ Z_{n_k}\}_{k=1}^{+\infty}\subset \cal{Z}$, where $\cal{Z}\subset \fC(\bE)$ is compact in the Pompeiu-Hausdorff metric $h$ of $\fC(\bE)$, there exists a subsequence $\{ Z_{n_{k_j}}\}_{j=1}^{+\infty}$ and a set $Z\in\cal{Z}$ such that $Z_{n_{k_j}} \xrightarrow{h} Z$ as $j\to+\infty$.
From this and \eqref{eq3.20} it follows that $Z\nsubset C +(\ep/2) B$ and hence, for some $z\in Z$, we have $d(z,C)\geq\ep/2$. Since $z\in Z$ and $Z_{n_{k_j}} \xrightarrow{h} Z$, there exists $j_0\in\bN$ such that
\begin{equation}\label{eq3.21}
 Z_{n_{k_j}} \cap B(z,\frac{\ep}{8}) \neq \emptyset \qquad \text{for all} \;\; j\geq j_0 .
\end{equation}
On the other hand, $z\notin C$ and $Z_{n_{k_j}} \xrightarrow{W} C$, and thus by Proposition \ref{proposition3.1} there exists $j_1\in\bN$ such that
\begin{equation}\label{eq3.22}
 Z_{n_{k_j}} \cap B(z,d(z,C)-\frac{\ep}{4})=\emptyset \qquad \text{for all} \;\;j\geq j_1 .
\end{equation}
Let $j=\max\{j_0,j_1\}$. Then by \eqref{eq3.21} and \eqref{eq3.22}
\begin{equation*}
 d(z, Z_{n_{k_j}})< \frac{\ep}{8} \quad \text{and}\quad d(z, Z_{n_{k_j}})\geq d(z,C)-\frac{\ep}{4},
\end{equation*}
which yields a contradiction since $d(z,C)\geq \ep/2$. Hence also $(jj)$ holds and this completes the proof.
 \end{proof}

For $\cal Z$ a nonempty subset of $\fC_c(\bE)$ define
\begin{equation*}
 \co \cZ =\{ Y \in\fC_c(\bE) \;|\; Y=\overline{\sum^n_{i=1} \lambda_i X_i },\;
\text{where}\; X_i\in\cZ, \lambda_i\in [ 0,1 ] \; \text{with} \; \sum_{i=1}^{d}\lambda_i=1, n\in\bN \}.
\end{equation*}
Further, we denote by $\text{cl}_{\fC_c(\bE)}[\co\cal{Z}]$ the closure of $\co\cal{Z}$ in the Pompeiu-Hausdorff metric of $\fC_c(\bE)$. The set $\co\cZ$ is convex, \textit{i.e.}, $Y_1,Y_2\in\co\cZ$ implies that $\overline{(1-\l)Y_1+\l Y_2}\in \co\cZ$ for every $\l\in [0,1]$. Clearly the set $\text{cl}_{\fC_c(\bE)}[\co\cal{Z}]$ is also convex.

\begin{proposition}[(Mazur's Theorem)]\label{proposition3.6}
 Let $\cZ$ be a nonempty compact subset of $\fC_c(\bE)$ in the Pompeiu-Hausdorff metric of $\fC_c(\bE)$. Then the set $\text{cl}_{\fC_c(\bE)}[\co\cal{Z}]$ is compact and convex.
\end{proposition}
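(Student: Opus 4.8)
The plan is to transport the whole problem into a genuine Banach space by means of R\aa dstr\"om's embedding theorem and then to appeal to the classical Mazur theorem there. Recall that R\aa dstr\"om's theorem furnishes a map $j\colon\fC_c(\bE)\to\bF$ into a real Banach space $\bF$ that is additive, $j(\overline{X+Y})=j(X)+j(Y)$, positively homogeneous, $j(\l X)=\l\,j(X)$ for $\l\geq 0$, and isometric, $\|j(X)-j(Y)\|_{\bF}=h(X,Y)$; moreover its image $K=j(\fC_c(\bE))$ is a convex cone in $\bF$. Since $(\fC_c(\bE),h)$ is a complete metric space (as noted in Section~\ref{section2}) and $j$ is an isometry, $K$ is complete and hence \emph{closed} in $\bF$. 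The convexity of $\text{cl}_{\fC_c(\bE)}[\co\cZ]$ has already been observed in the discussion preceding the statement, so only compactness remains to be proved.

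First I would verify that $j$ carries $\co\cZ$ exactly onto the algebraic convex hull $\co\bigl(j(\cZ)\bigr)$ of $j(\cZ)$ in the vector space $\bF$. Indeed, a typical element $Y=\overline{\sum_{i=1}^{n}\l_i X_i}$ of $\co\cZ$, with $X_i\in\cZ$, $\l_i\in[0,1]$ and $\sum_i\l_i=1$, satisfies $j(Y)=\sum_{i=1}^{n}\l_i\,j(X_i)$ by additivity and positive homogeneity, which is a convex combination of points of $j(\cZ)$; conversely every such convex combination arises in this way. Hence $j(\co\cZ)=\co\bigl(j(\cZ)\bigr)$.

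Next I would match up the closures. Because $j$ is an isometry of $\fC_c(\bE)$ onto $K$, it is a homeomorphism onto $K$, so $j$ sends $\text{cl}_{\fC_c(\bE)}[\co\cZ]$ onto the closure of $\co\bigl(j(\cZ)\bigr)$ taken \emph{relative to} $K$. Since $K$ is closed in $\bF$ and contains $\co\bigl(j(\cZ)\bigr)$, this relative closure coincides with the closure $\overline{\co\bigl(j(\cZ)\bigr)}$ taken in the ambient Banach space $\bF$. Consequently $j\bigl(\text{cl}_{\fC_c(\bE)}[\co\cZ]\bigr)=\overline{\co\bigl(j(\cZ)\bigr)}$. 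Finally, compactness of $\cZ$ in $(\fC_c(\bE),h)$ gives, via the isometry, that $j(\cZ)$ is compact in $\bF$; by the classical Mazur theorem the closed convex hull $\overline{\co\bigl(j(\cZ)\bigr)}$ of a compact set in a Banach space is compact. Pulling this back through the homeomorphism $j$ shows that $\text{cl}_{\fC_c(\bE)}[\co\cZ]$ is compact, which completes the argument.

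I expect the main delicate point to be the bookkeeping around the closure operation: one must be sure that taking the closure inside the cone $K$ is the same as taking it in the full space $\bF$, and this is exactly what the completeness of $\fC_c(\bE)$ (hence the closedness of $K$) buys us. The only other step requiring care is confirming that the embedding respects finite convex combinations, i.e.\ the identity $j(\co\cZ)=\co\bigl(j(\cZ)\bigr)$; once these two structural facts are in place, the compactness follows from Mazur's theorem essentially for free.
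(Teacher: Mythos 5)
Your argument is correct, but it takes a genuinely different route from the paper's. You transport everything into a Banach space via the R\aa dstr\"om embedding and then invoke the classical Mazur theorem there; the two delicate points you flag --- that the embedding converts the closed convex combinations $\overline{\sum_i\l_i X_i}$ into honest convex combinations $\sum_i\l_i j(X_i)$, and that the closure relative to the (closed, because complete) cone $K$ agrees with the closure in the ambient space --- are exactly the right ones, and both are handled correctly. The paper instead gives a direct, self-contained total-boundedness argument: given $\ep>0$ it takes a finite $\ep/2$-net $\{A_1,\dots,A_p\}$ of $\cZ$, considers the continuous map $\Phi(\a_1,\dots,\a_p)=\overline{\sum_i\a_i A_i}$ on the simplex $\D\subset\bR^p$, and shows that a finite $\ep/2$-net of the compact image $\Phi(\D)$ is an $\ep$-net of $\co\cZ$; completeness of $\fC_c(\bE)$ then upgrades total boundedness of the closure to compactness. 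What your approach buys is brevity and conceptual clarity, at the cost of importing the embedding machinery --- which, within this paper's ordering, only appears in Section~\ref{section6} (Proposition~\ref{proposition6.4}), so a reader following the paper linearly would see it as a forward reference (though there is no genuine circularity, since the embedding is an external result of R\aa dstr\"om and does not depend on Proposition~\ref{proposition3.6}). What the paper's approach buys is self-containedness and an argument that is really just a metric-space rerun of the standard proof of Mazur's theorem, which fits the elementary tone of Section~\ref{section3}.
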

\begin{proof}
 It suffices to show that $\text{cl}_{\fC_c(\bE)}[\co\cal{Z}]$ is compact. Let $\ep >0$. By hypothesis $\cal{Z}$ is compact and thus it admits a finite $\ep/2$-net $\{A_1,\dots,A_p\} \subset \cal{Z}$. Let $\D =\{ (\a_1,\dots, \a_p)\in \bR^p \;|\; \a_i\in [ 0,1 ],  \a_1+\dots +\a_p=1\}$. Define $\Phi\;:\;\D\to\fC_c(\bE)$ by
\begin{equation*}
 \Phi(\a_1,\dots,\a_p) = \overline{\sum_{i=1}^p\a_i A_i} \qquad\qquad  (\a_1,\dots, \a_p)\in\D,
\end{equation*}
and set $\cR=\Phi(\D)$. Evidently $\cR$ is compact and thus it admits a finite $\ep/2$-net $\{ Y_1,\dots,Y_q \}\subset \cR$.
We want to show that $\{ Y_1,\dots,Y_q \}$ is an $\ep$-net of $\co\cZ$. Indeed let $X\in\co\cZ$ be arbitrary, and thus
\begin{equation*}
X=\overline{\sum^n_{i=1} \lambda_i X_i },                                                                                                                      
\end{equation*}
for some $X_i\in\cZ$, $\l_i\in [0,1]$ with $\l_1 +\dots +\l_n =1$. As $\{A_1,\dots,A_p\}$ is an $\ep/2$-net of $\cZ$, one has that each $X_i$, $1\leq i\leq n$, is contained in some ball, say $B(A_i^{\prime},\ep/2)$, for some $A_i^{\prime}\in \{A_1,\dots,A_p\}$. Set
\begin{equation*}
Y=\overline{\sum^n_{i=1} \lambda_i A_i^{\prime} }.                                                                                                                       
\end{equation*}
Clearly $h(X,Y)<\ep/2$. Moreover, $Y\in\cR$ because for some $\a_i\in [0,1]$ with $\a_1 +\dots +\a_p =1$ we have 
\begin{equation*}
Y=\overline{\sum^p_{i=1} \a_i A_i }.                                                                                                                       
\end{equation*}
Since $\{Y_1,\dots,Y_q \}$ is an $\ep/2$-net of $\cR$ and $Y\in\cR$, it follows that $Y\in B(Y_j,\ep/2)$ for some $1\leq j\leq q$. Whence $X\in B(Y_j,\ep/2)$, for $h(X,Y_j)\leq h(X,Y) +h(Y,Y_j)<\ep$. As $X\in\co\cZ$ is arbitrary, we conclude that $\{Y_1,\dots,Y_q\}$ is an $\ep$-net of $\co\cZ$ and hence also for $\text{cl}_{\fC_c(\bE)}[\co\cal{Z}]$. Hence $\text{cl}_{\fC_c(\bE)}[\co\cal{Z}]$ is totally bounded. Furthermore $\text{cl}_{\fC_c(\bE)}[\co\cal{Z}]$ is complete since it is closed in $\fC_c(\bE)$, a complete metric space. Consequently $\text{cl}_{\fC_c(\bE)}[\co\cal{Z}]$ is compact, completing the proof.
\end{proof}

\section{Convexification in the limit results}\label{section4}
In this section we prove some convexification in the limit results which turn out to be useful for the proof of the SLLN for $\fC(\bE)$-valued random sets.

For any set $D\in \fC(\bE)$ and $n\in\bN$ we set
\begin{equation*}
 D[n]=\frac{\overbrace{D+\dots+D}^{n-\text{times}}}{n}
\end{equation*}

\begin{proposition}\label{proposition4.1}
 Let $C\in\ffC$. Then
\begin{equation}\label{eq4.1}
 \overline{C[n]}\xrightarrow{F}\cco C \qquad \text{as}\quad n\to+\infty
\end{equation}
\end{proposition}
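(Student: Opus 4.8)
The plan is to verify the two conditions $(a_1)$ and $(a_2)$ of Definition \ref{def2.2} directly, with the limit set taken to be $X=\cco C$. The observation that immediately settles $(a_1)$ is the chain of inclusions
\[
 C[n]\subseteq \co C\subseteq \cco C \qquad (n\in\bN).
\]
Indeed, every element of $C[n]$ has the form $\tfrac1n(x_1+\dots+x_n)$ with $x_i\in C$, hence is a convex combination of points of $C$. Passing to closures gives $\overline{C[n]}\subseteq\cco C$, so that $e(\overline{C[n]},\cco C)=0$ for every $n$, which is even more than $(a_1)$ demands.

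The substance of the proof is condition $(a_2)$, i.e. $d(z,\overline{C[n]})\to 0$ for each $z$ in the limit set $\cco C$; since the distance to a set equals the distance to its closure, it is equivalent to show $d(z,C[n])\to 0$. I first treat a point $z\in\co C$, writing $z=\sum_{i=1}^k\lambda_i c_i$ with $c_i\in C$, $\lambda_i\ge 0$ and $\sum_i\lambda_i=1$, where $k$ is now held fixed. For each $n$ I replace the weights by rational weights with common denominator $n$: choose integers $m_i\ge 0$ with $\sum_i m_i=n$ and $\sum_{i=1}^k|\lambda_i-m_i/n|\le 2k/n$ (take $m_i=\lfloor n\lambda_i\rfloor$ and absorb the deficit into a single index). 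The point
\[
 z_n=\frac1n\sum_{i=1}^k m_i c_i
\]
is the average of $n$ points of $C$, namely $c_i$ counted $m_i$ times, so $z_n\in C[n]$, and
\[
 \|z-z_n\|=\Bigl\|\sum_{i=1}^k\bigl(\lambda_i-\tfrac{m_i}{n}\bigr)c_i\Bigr\|\le \|C\|\sum_{i=1}^k\bigl|\lambda_i-\tfrac{m_i}{n}\bigr|\le \frac{2k\,\|C\|}{n}.
\]
As $C$ is bounded, $\|C\|<\infty$ and $k$ is fixed, whence $d(z,C[n])\le\|z-z_n\|\to 0$.

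Finally I pass to an arbitrary $z\in\cco C=\overline{\co C}$ by density: given $\eps>0$, pick $z'\in\co C$ with $\|z-z'\|<\eps/2$; by the previous step $d(z',C[n])<\eps/2$ for all large $n$, so that $d(z,C[n])\le\|z-z'\|+d(z',C[n])<\eps$. Thus $d(z,\overline{C[n]})\to 0$ for every $z\in\cco C$, which establishes $(a_2)$ and hence \eqref{eq4.1}.

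The only genuine obstacle is $(a_2)$, and within it the rational-weight approximation; the point to keep honest is that the number $k$ of points entering the representation of $z$ (or of its approximant $z'$) is frozen \emph{before} letting $n\to+\infty$, which is exactly what forces the bound $2k\,\|C\|/n$ to $0$. No separability or compactness of $C$ is required: each estimate involves only finitely many points of $C$, so the argument is valid in an arbitrary Banach space $\bE$, and boundedness of $C$ enters solely through the finite constant $\|C\|$.
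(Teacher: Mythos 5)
Your proof is correct and follows essentially the same route as the paper's: condition $(a_1)$ is immediate from $C[n]\subseteq\cco C$, and $(a_2)$ is obtained by approximating convex combinations of points of $C$ by $n$-fold averages and then passing to all of $\cco C$ by density. The only (harmless) difference is bookkeeping: the paper first treats \emph{rational} convex combinations, which lie exactly in $C[kp]$ and are then extended to all $n$ by padding with a fixed point of $C$, whereas you approximate arbitrary weights directly by $\lfloor n\lambda_i\rfloor/n$ for every $n$ --- a slightly more streamlined version of the same idea.
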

\begin{proof}
Denote by $\bQ^+$ the set of positive rationals $\rho\geq 0$. Define
\begin{equation*}
 Q = \{ x\in \co C \;|\; x=\sum_{i=1}^d \rr_i x_i ,\; \text{where}\; x_i\in C, \rr_i\in\bQ^+, \;\text{with}\;\sum_{i=}^d \rr_i =1, d\in\bN \}
\end{equation*}
and observe that $Q\in\fB(\bE)$ and $\overline{Q}=\cco C$.

Indeed let $z\in Q$ be arbitrary. Then, for some $x_1,\dots,x_d \in C$ and $p_1,\dots,p_d\in\bN$, we have
\begin{equation*}
 z = \frac{p_1}{p}x_1 +\dots + \frac{p_d}{p}x_d \qquad  \text{where}\qquad p_1+\dots +p_d =p.
\end{equation*}
Moreover
\begin{equation*}
 z\in C[n], \qquad \text{for every}\;\; n\in \cI =\{p, 2p,\dots,kp,\dots \}
\end{equation*}
because for each $n\in\cI$, say $n=kp$, we have
\begin{gather*}
 z= \frac{kp_1 x_1 +\dots + kp_d x_d}{kp}\\
=\frac{\overbrace{x_1 +\dots + x_1}^{kp_1 -\text{times}} + \dots + \overbrace{x_d +\dots + x_d}^{kp_d -\text{times}}}{kp}\\
\in \frac{\overbrace{C +\dots + C}^{kp -\text{times}}}{kp}=C[n].
\end{gather*}

{\bf Claim.} There exists a sequence $\{c_n \}_{n=1}^{+\infty}\subset \bE$ satisfying the following properties:
\begin{eqnarray}
c_n=z \qquad \text{if}\;\;  n= kp\;\;\text{for}\;\; k=1,2,\dots \label{eq4.2}\\
 c_n\in C[n] \qquad \text{for every}\;\; n\geq p \qquad\;\;\label{eq4.3}\\
c_n\rightarrow z \qquad \text{as}\;\; n\to+\infty\qquad\qquad\quad\; \label{eq4.4}
\end{eqnarray}
Indeed fix $c\in C$ and define the sequence $\{c_n\}_{n=1}^{+\infty}$ as follows
\[c_n =
 \begin{cases}
z \quad & \text{if} \;\;n=kp,\; k\in\bN\\
\frac{\overbrace{x_1 +\dots + x_1}^{kp_1 -\text{times}} + \dots + \overbrace{x_d +\dots + x_d}^{kp_d -\text{times}} + \overbrace{c +\dots + c}^{j -\text{times}}}{kp+j} & \text{if}\;\; n=kp+j,\;k\in\bN \\
 & \quad j=1,\dots, p-1.
\end{cases}
\]
Clearly for every $k\in\bN$ and $j=1, 2,\dots, p-1$ we have $c_{kp}\in C[kp]$ and $c_{kp+j}\in C[kp+j]$. It is evident that the sequence $\{c_n\}_{n=1}^{+\infty}$ satisfies \eqref{eq4.2} and \eqref{eq4.3}. Furthermore, for any $k\in\bN$ and $j=1, 2,\dots, p-1$, we have
\begin{gather*}
 c_{kp+j}-z=\frac{\overbrace{x_1 +\dots + x_1}^{kp_1 -\text{times}} + \dots + \overbrace{x_d +\dots + x_d}^{kp_d -\text{times}} + \overbrace{c +\dots + c}^{j -\text{times}}}{kp+j}\\
-\frac{\overbrace{x_1 +\dots + x_1}^{kp_1 -\text{times}} + \dots + \overbrace{x_d +\dots + x_d}^{kp_d -\text{times}}}{kp}\\
=\frac{kp}{kp+j} \frac{\overbrace{x_1 +\dots + x_1}^{kp_1 -\text{times}} + \dots + \overbrace{x_d +\dots + x_d}^{kp_d -\text{times}}}{kp} + \frac{j}{n+j}c\\
-\frac{\overbrace{x_1 +\dots + x_1}^{kp_1 -\text{times}} + \dots + \overbrace{x_d +\dots + x_d}^{kp_d -\text{times}}}{kp}
=\left( \frac{kp}{kp+j}-1\right) z+ \frac{j}{kp+j}c
\end{gather*}
From the latter, letting $k\to+\infty$ it follows that $c_{kp+j}\to z$. This and \eqref{eq4.2}imply that $c_n\to z$ as $n\to+\infty$. Hence also \eqref{eq4.2} is satisfied and the Claim is proved.

Now we are in position to complete the proof of Proposition \ref{proposition4.1}. Evidently $e(C[n],\cco C)=0$, since $C[n]\subset \cco C$ for each $n\in\bN$. Moreover, by the Claim $\lim_{n\to+\infty} d(z,C[n])=0$. As $z\in Q$ is arbitrary, it follows that $C[n]\xrightarrow{F}Q$ as $n\to+\infty$. Hence $\overline{C[n]}\xrightarrow{F}\overline{Q}$, \textit{i.e.}, $\overline{C[n]}\xrightarrow{F}\overline{\co C}$ as $n\to+\infty$. Thus \eqref{eq4.1} is valid and the proof is complete.
\end{proof}

\begin{theorem}\label{theo4.1}
 Let $\cZ\subset \fC(\bE)$ be a nonempty compact set in the Pompeiu-Hausdorff metric $h$ of $\fC(\bE)$. Let $\{X_n\}_{n=1}^{+\infty}\subset \cZ$ and let $C\in\fC_c(\bE)$. Then
\begin{equation*}
 \frac{\overline{\overline{co}X_1 +\dots + \overline{co}X_n}}{n} \xrightarrow{F}C \qquad\text{as}\;n\to+\infty
\end{equation*}
implies
\begin{equation}\label{eq4.5}
 \frac{\overline{X_1 +\dots + X_n}}{n} \xrightarrow{F}C \qquad\text{as}\;n\to+\infty.
\end{equation}
\end{theorem}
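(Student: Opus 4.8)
Write $S_n=\overline{(X_1+\dots+X_n)/n}$ and $T_n=\overline{(\cco X_1+\dots+\cco X_n)/n}$, so that the hypothesis reads $T_n\xrightarrow{F}C$ and the goal is $S_n\xrightarrow{F}C$. The plan is to verify the two Fisher conditions of Definition \ref{def2.2} separately. Since $X_i\subset\cco X_i$ forces $S_n\subset T_n$ for every $n$, condition $(a_1)$ is immediate: $e(S_n,C)\le e(T_n,C)\to0$. All the substance is in $(a_2)$, i.e.\ in showing $d(z,S_n)\to0$ for each fixed $z\in C$. This is the genuine \emph{convexification in the limit} statement: the non-convex Cesàro averages must already fill out $C$ in the limit, even though no single summand is convex, and the only thing preventing a direct comparison with $T_n$ is that $d(z,S_n)\ge d(z,T_n)$ points the wrong way.

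Fix $z\in C$ and $\ep>0$, and set $D=\sup_{A\in\cZ}\|A\|<\infty$ (finite since $\cZ$ is compact, hence bounded). I would first use compactness of $\cZ$ to trade the infinitely many distinct summands for finitely many prototypes: choose a finite $\ep$-net $\{A_1,\dots,A_p\}\subset\cZ$ and, for each $i$, an index $k(i)$ with $h(X_i,A_{k(i)})<\ep$. Partition $\{1,\dots,p\}$ into the set $K$ of types occurring infinitely often along the sequence and its complement. Indices of non-$K$ type are finite in total number, say $R$, so their contribution to any average has norm $\le RD/n$ and is negligible for large $n$; for $k\in K$ the multiplicity $n_k(n)=\#\{i\le n:k(i)=k\}$ tends to $+\infty$, and this divergence is exactly what will allow me to imitate Proposition \ref{proposition4.1} inside each type class.

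Next I would feed the hypothesis in \emph{at scale $n$} (matching the current type-frequencies is essential, since these need not converge). As $d(z,T_n)\to0$, for large $n$ there are $\xi_i\in\cco X_i$ with $\|z-\tfrac1n\sum_{i=1}^n\xi_i\|<\ep$; from $h(X_i,A_{k(i)})<\ep$ one gets $\cco X_i\subset\cco A_{k(i)}+\ep\overline B$, so $\xi_i=\eta_i+\ep b_i$ with $\eta_i\in\cco A_{k(i)}$ and $\|b_i\|\le1$. Grouping by type and putting $w_k=\tfrac1{n_k}\sum_{k(i)=k}\eta_i\in\cco A_k$, the target becomes $z\approx\sum_{k\in K}\tfrac{n_k}{n}w_k$ up to an error of order $\ep$ (the $b_i$ contribute $\le\ep$, the non-$K$ types $\le RD/n$). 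It then suffices, for each $k\in K$, to approximate the point $w_k\in\cco A_k$ by a \emph{genuine} average $\tfrac1{n_k}\sum_{k(i)=k}x_i$ with $x_i\in X_i$: because $A_k\subset X_i+\ep B$, every prototype of $A_k$ is within $\ep$ of each $X_i$ of type $k$, and reassembling these sub-averages with the weights $n_k/n$ (picking arbitrary $x_i\in X_i$ at the finitely many non-$K$ indices) produces a point of $\tfrac1n(X_1+\dots+X_n)\subset S_n$ close to $z$.

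The decisive step, and the main obstacle, is this conversion of a point $w_k$ of the \emph{closed convex hull} into an honest average of selected points $x_i\in X_i$, with error vanishing as $n_k\to\infty$: a Shapley--Folkman phenomenon, trivial in finite dimensions but delicate here because $\bE$ is arbitrary and $\cco A_k$ need not be compact. The mechanism I would use is precisely that of Proposition \ref{proposition4.1}, carried out inside each type class: approximate $w_k$ within $\ep$ by a finite convex combination $\sum_{l=1}^{m_k}\mu_{k,l}a_{k,l}$ with rational weights and $a_{k,l}\in A_k$, distribute the $n_k$ slots of type $k$ among the $a_{k,l}$ in proportions $\mu_{k,l}$, round to integers, and select $x_i\in X_i$ within $\ep$ of the assigned prototype. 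The rounding defect is then $O\!\big(D\sum_{k\in K}m_k/n\big)$ and the remaining errors are $O(\ep)$, so letting $n\to\infty$ and then $\ep\to0$ would give $d(z,S_n)\to0$ \emph{provided the prototype counts $m_k$ can be bounded independently of $n$}. This proviso is the real difficulty: the target $w_k$ depends on $n$, and with $\cco A_k$ non-compact the number of prototypes needed to $\ep$-approximate a moving point of $\cco A_k$ could a priori grow with $n$ and swamp the estimate. Controlling $m_k$ uniformly in the moving target is exactly where the compactness of $\cZ$ and the single-set convexification of Proposition \ref{proposition4.1} must be brought to bear, and I expect this to be the heart of the argument.
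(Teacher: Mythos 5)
Your reduction is the right one, and your setup mirrors the paper's up to the decisive step: both arguments dispose of $(a_1)$ by the inclusion $S_n\subset T_n$, both discretize $\{X_n\}$ through a finite $\ep$-net of $\cZ$, and both intend to realize points of closed convex hulls as genuine averages of selections $x_i\in X_i$ via the rational-weights-and-rounding mechanism of Proposition \ref{proposition4.1}. But the gap you flag at the end is real and, along the route you have chosen, not repairable: in an infinite dimensional $\bE$ there is no uniform bound on the number $m_k$ of points of $A_k$ needed to $\ep$-approximate an arbitrary point of $\cco A_k$ (take $A_k$ to be the set of canonical basis vectors of $\ell^1$: approximating $(a_1+\dots+a_N)/N$ within $1/2$ requires on the order of $N$ of them). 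Since your target $w_k=w_k(n)$ ranges over all of $\cco A_k$ as $n$ varies, the rounding defect $O(D\sum_k m_k(n)/n)$ cannot be shown to vanish, and the direct ``fix $z$, fix $\ep$, estimate for each $n$'' scheme stalls exactly where you say it does.

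The paper escapes by changing the quantifier structure rather than by bounding $m_k$. It proves $(a_2)$ by contradiction: assuming $d(z,S_{n_k})\ge\theta$ along a subsequence, it passes to a further subsequence along which the type frequencies $p^i_{n_{k_j}}/n_{k_j}$ converge to some $(\l_1,\dots,\l_d)$ in the simplex. Along that subsequence the discretized averages Fisher-converge to the \emph{fixed} set $\Gamma=\overline{\sum_i\l_i\cco C_i}$ (Claim 1, from Proposition \ref{proposition4.1} applied to each net element together with Proposition \ref{proposition3.3}), and $h(C,\Gamma)\le\ep$ (Claim 2, from Proposition \ref{proposition3.4}). One then picks a single point $y\in\Gamma$ with $\|z-y\|<2\ep$ and uses the Fisher convergence to $\Gamma$ to place points of the discretized, hence of the true, averages within $4\ep$ of $z$, contradicting the assumed lower bound $\theta>8\ep$. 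Because $y$ is fixed before $j$ is sent to infinity, the rational convex combination approximating it involves a fixed number of prototypes, and the rounding error dies as $p^i_{n_{k_j}}\to+\infty$; the moving target never arises. To salvage your direct argument you would have to import this device: replace the pointwise chase of $w_k(n)$ by a subsequential compactness argument on the frequency vectors and a set-level Fisher limit.
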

\begin{proof}
The statement is proved if we show:
\begin{itemize}
\item[\textit{(j)}] $\lim_{n\to+\infty} e((X_1+\dots+X_n)/n, C) =0$
\item[\textit{(jj)}] $\lim_{n\to+\infty}d\left( z, (X_1 +\dots + X_n)/n\right)=0$ for each $z\in C$.
\end{itemize}
The fact that $(j)$ holds is obvious, since for each $n\in\bN$
\begin{equation*}
 e\left(\frac{X_1+\dots+X_n}{n}, C\right)\leq e\left(\frac{\cco X_1+\dots+\cco X_n}{n}, C\right),
\end{equation*}
and the right hand side vanishes as $n\to+\infty$.

Let us prove $(jj)$. Arguing by contradiction, suppose that $(jj)$ does not hold. Then for some $z\in C$ there exists $\theta >0$ and a subsequence $\{ (X_1 +\dots + X_{n_k})/n_k\}_{k=1}^{+\infty}$ such that
\begin{equation}\label{eq4.6}
d\left( z, \frac{X_1 +\dots + X_{n_k}}{n_k}\right)\geq \theta \qquad\text{for every}\;k\in\bN.
\end{equation}
Let $0<\ep <\theta/8$. Since $\cZ$ is compact in $\fC(\bE)$, it admits a finite $\ep$-net, say $\{C_1,C_2,\dots,C_d\}\subset \cZ$, for some $d\in\bN$. We now associate to the given sequence $\{X_n\}_{n=1}^{+\infty}\subset\cZ$ another sequence $\{C^{\prime}_n\}_{n=1}^{+\infty}$ constructed with the following procedure. Consider the ordered set of balls
\begin{equation*}
\{B(C_1,\ep),B(C_2,\ep),\dots,B(C_d,\ep)\}.
\end{equation*}
Then for any fixed $n\in\bN$, denote by $C^{\prime}_n$ the centre of the ball $B(C_i,\ep)$, where $i$ is the smallest index $1\leq i\leq d$ such that $X_n\in B(C_i,\ep)$. Clearly $h(C^{\prime}_n,X_n)<\ep$ for every $n\in\bN$. Furthermore, we associate to $\{ (X_1 +\dots + X_{n_k})/n_k\}_{k=1}^{+\infty}$ the following sequence:
\begin{equation}\label{eq4.7}
\left\lbrace \frac{C^{\prime}_1 +\dots + C^{\prime}_{n_k}}{n_k}\right\rbrace_{k=1}^{+\infty}.
\end{equation}

{\bf Claim 1.} There exists a subsequence $\{ (C^{\prime}_1 +\dots + C^{\prime}_{n_{k_j}})/n_{k_j}\}_{j=1}^{+\infty}$ of \eqref{eq4.7} and there is a set $\Gamma\in\fC_c(\bE)$ given by $\Gamma=\overline{\sum_{i=1}^d \lambda_i\cco C_i}$, for some $\lambda_1,\dots ,\lambda_d\geq 0$ with $\lambda_1+\dots +\lambda_d =1$, such that:
\begin{equation}\label{eq4.8}
\frac{\overline{C^{\prime}_1+\dots +C^{\prime}_{n_{k_j}}}}{n_{k_j}}\xrightarrow{F} \Gamma\qquad \text{as}\;j\to+\infty
\end{equation}
and
\begin{equation}\label{eq4.9}
\frac{\overline{\cco C^{\prime}_1+\dots +\cco C^{\prime}_{n_{k_j}}}}{n_{k_j}}\xrightarrow{h} \Gamma \qquad \text{as}\; j\to+\infty.
\end{equation}
Let us prove \eqref{eq4.8}. In view of the definition of the sequence $\{C^{\prime}_n\}_{n=1}^{+\infty}$, for every $k\in\bN$ there exists a partition of the set $\{1,2,\dots,n_k\}$ into $d$ pairwise disjoint sets $P^1_{n_k}, P^2_{n_k}, \dots,P^d_{n_k}$ consisting, respectively, of $p^1_{n_k}, p^2_{n_k}, \dots,p^d_{n_k}$ elements, where $0\leq p^i_{n_k}\leq n_k$ for $i=1,\dots,d$ and $p^1_{n_k}+\dots+p^d_{n_k}=n_k$, such that
\begin{equation*}
C^{\prime}_i=C_1\;\text{for}\; i\in P^1_{n_k},\; C^{\prime}_i=C_2\;\text{for}\; i\in P^2_{n_k},\;\dots\; ,C^{\prime}_i=C_d \;\text{for}\; i\in P^d_{n_k}.
\end{equation*}
Observe that $p^i_{n_k}=0$ whenever $P^i_{n_k}=\emptyset$.

To fix ideas, suppose that $p^i_{n_k}\geq 1$ for $i=1,\dots,d$ (if some $p^i_{n_k}=0$, the argument is similar). We have
\begin{multline}\label{eq4.10}
\frac{C^{\prime}_1+\dots +C^{\prime}_{n_k}}{n_k}=\frac{1}{n_k}\left[\overbrace{C_1+\dots + C_1}^{p^1_{n_k} -\text{times}} + \overbrace{C_2+\dots + C_2}^{p^2_{n_k} -\text{times}} + \dots + \overbrace{C_d+\dots + C_d}^{p^d_{n_k} -\text{times}}\right] \\
= \frac{1}{n_k}\left[ p^1_{n_k}\frac{\overbrace{C_1+\dots + C_1}^{p^1_{n_k} -\text{times}}}{p^1_{n_k}} + p^2_{n_k}\frac{\overbrace{C_2+\dots + C_2}^{p^2_{n_k} -\text{times}}}{p^2_{n_k}} + \dots + p^d_{n_k}\frac{\overbrace{C_d+\dots + C_d}^{p^d_{n_k} -\text{times}}}{p^d_{n_k}}\right] \\
=\frac{p^1_{n_k}}{n_k} C_1[p^1_{n_k}]+\frac{p^2_{n_k}}{n_k} C_2[p^2_{n_k}]+\dots +\frac{p^d_{n_k}}{n_k} C_d[p^d_{n_k}] .\qquad\quad
\end{multline}
For each fixed $i=1,\dots,d$ the sequence $\{ p^i_{n_k}\}_{k=1}^{+\infty}$ is non-decreasing, $i.e.$, $p^i_{n_k}\leq p^i_{n_{k+1}}$ for every $k\in\bN$. Consider the sequence
\begin{equation}\label{eq4.11}
\left\lbrace \left(\frac{p^1_{n_k}}{n_k},\frac{p^2_{n_k}}{n_k},\dots,\frac{p^d_{n_k}}{n_k}\right) \right\rbrace_{k=1}^{+\infty}.
\end{equation}
Evidently, $(p^1_{n_k}/n_k\,,\,p^2_{n_k}/n_k\,,\dots,\,p^d_{n_k}/n_k)\in \Delta$ for every $k\in\bN$, where
\begin{equation*}
\Delta= \{ (\l_1,\dots,\l_d) | \l_i\geq0,\;i=0,\dots,d\;\text{and}\; \l_1+\dots+\l_d=1\}, 
\end{equation*}
and thus the sequence \eqref{eq4.11} contains a subsequence
\begin{equation*}
\left\lbrace \left(\frac{p^1_{n_{k_j}}}{n_{k_j}},\frac{p^2_{n_{k_j}}}{n_{k_j}},\dots,\frac{p^d_{n_{k_j}}}{n_{k_j}}\right) \right\rbrace_{j=1}^{+\infty}.
\end{equation*}
which converges to a limit $(\l_1,\dots,\l_d)\in\Delta$ as $j\to+\infty$. For each fixed $1\leq i\leq d$, if $p^i_{n_{k_j}}\to+\infty$ as $j\to+\infty$, then by Propositions \ref{proposition4.1} and \ref{proposition3.3},
\begin{equation}\label{eq4.12}
 \frac{p^i_{n_{k_j}}}{n_{k_j}} \overline{C_i [p^i_{n_{k_j}}]} \xrightarrow{F} \l_i\cco C_i \qquad\qquad\text{as}\; j\to+\infty.
\end{equation}
It is evident that the latter limit remains valid also when $\{p^i_{n_{k_j}}\}_{j=1}^{+\infty}$ is bounded for, in this case, $\l_i=0$. Set $\Gamma = \overline{\sum_{i=0}^d \l_i\cco C_i}$. From \eqref{eq4.10}, by virtue of \eqref{eq4.12} and Proposition \ref{proposition3.3}, it follows that
\begin{equation*}
\frac{\overline{C^{\prime}_1+\dots +C^{\prime}_{n_{k_j}}}}{n_{k_j}}\xrightarrow{F} \Gamma \qquad \text{as}\;j\to+\infty
\end{equation*}
and thus \eqref{eq4.8} holds. Concerning \eqref{eq4.9}, it suffices to observe that
\begin{gather*}
\frac{\cco C^{\prime}_1+\dots +\cco C^{\prime}_{n_{k_j}}}{n_{k_j}}=\\
= \frac{1}{n_{k_j}}\left[\overbrace{\cco C_1+\dots + \cco C_1}^{p^1_{n_{k_j}} -\text{times}} + \overbrace{\cco C_2+\dots +\cco C_2}^{p^2_{n_{k_j}} -\text{times}} + \dots + \overbrace{\cco C_d+\dots +\cco C_d}^{p^d_{n_{k_j}} -\text{times}}\right] \\
=\frac{p^1_{n_{k_j}}}{n_{k_j}} \cco C_1+\frac{p^2_{n_{k_j}}}{n_{k_j}}\cco C_2 +\dots +\frac{p^d_{n_{k_j}}}{n_{k_j}}\cco C_d, 
\end{gather*}
from which \eqref{eq4.9} follows at once, by letting $j\to+\infty$. Thus Claim 1 is valid.

{\bf Claim 2.} We have $h(C,\Gamma)\leq \ep$.

Indeed, from \eqref{eq4.5} and \eqref{eq4.9} it follows that
\begin{eqnarray}
\frac{\overline{\cco X_1+\dots +\cco X_{n_{k_j}}}}{n_{k_j}}\xrightarrow{F} C \qquad \text{as}\; j\to+\infty \label{eq4.13}\\
\frac{\overline{\cco C^{\prime}_1+\dots +\cco C^{\prime}_{n_{k_j}}}}{n_{k_j}}\xrightarrow{F} \Gamma \qquad \text{as}\; j\to+\infty. \label{eq4.14}
\end{eqnarray}
Furthermore, by construction, the sequence $\{C^{\prime}_n\}_{n=1}^{+\infty}$ satisfies $h(X_n,C^{\prime}_n)\leq\ep$ for every $n\in\bN$. Hence
\begin{equation}\label{eq4.15}
h\left( \frac{X_1+\dots+X_{n_{k_j}}}{n_{k_j}}, \frac{C^{\prime}_1+\dots+C^{\prime}_{n_{k_j}}}{n_{k_j}}\right) \leq \frac{1}{n_{k_j}}\sum_{i=1}^{n_{k_j}} h(X_i,C^{\prime}_i)<\ep\quad \text{for every}\;j\in\bN
\end{equation}
and thus, a fortiori,
\begin{equation*}
h\left( \frac{\overline{\cco X_1+\dots +\cco X_{n_{k_j}}}}{n_{k_j}},\frac{\overline{\cco C^{\prime}_1+\dots +\cco C^{\prime}_{n_{k_j}}}}{n_{k_j}}\right) <\ep \quad \text{for every}\;j\in\bN.
\end{equation*}
Then by virtue of Proposition \ref{proposition3.4}, in view of \eqref{eq4.13}-\eqref{eq4.15}, it follows that $h(C,\Gamma)\leq\ep$, and thus Claim 2 is proved.

With the help of Claims 1 and 2 we are now in a position to complete the proof of $(jj)$. Since $z\in C$ and, by Claim 2, $h(C,\Gamma)\leq\ep$, there exists $y\in\Gamma$ such that
\begin{equation}\label{eq4.16}
\| z-y\|<2\ep.
\end{equation}
As $y\in\Gamma$, then by \eqref{eq4.8} there is a $j_0\in\bN$ such that
\begin{equation}\label{eq4.17}
d\left( y, \frac{C^{\prime}_1+\dots+C^{\prime}_{n_{k_j}}}{n_{k_j}}\right) <\ep \qquad \text{for every}\; j\geq j_0.
\end{equation}
Taking into account \eqref{eq4.16}, \eqref{eq4.17} and \eqref{eq4.15}, for every $j\geq j_0$ we have
\begin{gather*}
d\left( z, \frac{X_1+\dots+X_{n_{k_j}}}{n_{k_j}}\right)\leq \|z-y\| +\\
+d\left( y, \frac{C^{\prime}_1+\dots+C^{\prime}_{n_{k_j}}}{n_{k_j}}\right) +h\left(\frac{C^{\prime}_1+\dots+C^{\prime}_{n_{k_j}}}{n_{k_j}}, \frac{X_1+\dots+X_{n_{k_j}}}{n_{k_j}}\right) <4\ep,
\end{gather*}
a contradiction to \eqref{eq4.6}, since $\ep<\theta/8$. Hence also $(jj)$ is valid and the proof of the theorem is complete.
\end{proof}

\begin{theorem}\label{theo4.2}
Let $\cZ\subset \fC(\bE)$ be a nonempty compact set in the Pompeiu-Hausdorff metric $h$ of $\fC(\bE)$. Let $\{X_n\}_{n=1}^{+\infty} \subset\cZ$, and let $C\in \fC_c(\bE)$ be a compact and convex set. Then
\begin{equation}\label{eq4.18}
\frac{\overline{\cco X_1+\dots +\cco X_n}}{n}\xrightarrow{F} C \qquad \text{as}\; n\to+\infty
\end{equation}
implies
\begin{equation*}
\frac{\overline{X_1+\dots +X_n}}{n}\xrightarrow{h} C \qquad \text{as}\; n\to+\infty.
\end{equation*}
\end{theorem}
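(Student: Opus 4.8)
The plan is to reduce the statement to Theorem \ref{theo4.1} and then use the extra hypothesis that $C$ is compact to strengthen the resulting Fisher convergence to Pompeiu-Hausdorff convergence. First I would observe that hypothesis \eqref{eq4.18} is precisely the hypothesis of Theorem \ref{theo4.1}: the only additional assumption here is that $C$ is compact, and this does not affect the applicability of that theorem, since a compact convex set still lies in $\fC_c(\bE)$. Hence Theorem \ref{theo4.1} applies verbatim and yields, writing $S_n=\frac{\overline{X_1+\dots+X_n}}{n}$,
\[
 S_n \xrightarrow{F} C \qquad \text{as } n\to+\infty .
\]
Everything then comes down to upgrading this Fisher convergence to $h$-convergence, and this is the only place where the compactness of $C$ is genuinely needed.

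Next I would split the Pompeiu-Hausdorff distance as $h(S_n,C)=\max\{e(S_n,C),e(C,S_n)\}$ and show each excess tends to $0$. The first excess is free: $e(S_n,C)\to 0$ is exactly condition $(a_1)$ of Definition \ref{def2.2}, already guaranteed by the Fisher convergence just obtained. Thus the entire content of the theorem is the reverse excess, namely $e(C,S_n)=\sup_{z\in C} d(z,S_n)\to 0$; that is, I must turn the pointwise lower estimate supplied by Fisher convergence into a uniform one over $C$.

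For this, the hard part, I would argue exactly as in part $(j)$ of the proof of Proposition \ref{proposition3.5}. Condition $(a_2)$ of Fisher convergence gives, for each fixed $z\in C$, that $d(z,S_n)\to 0$. The key remark is that the functions $z\mapsto d(z,S_n)$ are $1$-Lipschitz uniformly in $n$, hence equicontinuous, so on the \emph{compact} set $C$ pointwise convergence automatically becomes uniform. Concretely: given $\ep>0$, cover $C$ by finitely many balls $B(u_1,\ep),\dots,B(u_m,\ep)$ with centres $u_i\in C$ (possible since $C$ is compact), pick $N$ with $d(u_i,S_n)<\ep$ for all $i$ and all $n\ge N$, and then for arbitrary $z\in C$ choose $u_i$ with $\|z-u_i\|<\ep$ to get $d(z,S_n)\le\|z-u_i\|+d(u_i,S_n)<2\ep$ for $n\ge N$. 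This gives $e(C,S_n)\le 2\ep$ eventually, whence $e(C,S_n)\to 0$ and therefore $h(S_n,C)\to 0$.

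I would resist the temptation to deduce the conclusion by combining Proposition \ref{proposition3.2} (to pass from Fisher to Wijsman) with Proposition \ref{proposition3.5}, since that route would require knowing in advance that $\{S_n\}$ lies in an $h$-compact subset of $\fC(\bE)$; by uniqueness of the Wijsman limit such relative compactness is essentially equivalent to the desired conclusion, so this path is circular and offers no simplification. The direct two-excess argument above is self-contained and isolates the single genuinely new ingredient, the compactness of $C$, which is exactly what is missing in the example following Proposition \ref{proposition3.2}, where Fisher convergence to a non-compact limit fails to be $h$-convergence.
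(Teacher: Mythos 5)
Your proof is correct, and for half of the argument it follows the paper exactly: the inclusion $C\subset S_n+\ep B$ (for $n$ large) is obtained in the paper by precisely your covering argument, using the compactness of $C$ together with part $(jj)$ of the Fisher convergence delivered by Theorem \ref{theo4.1}. Where you genuinely diverge is in the opposite inclusion $S_n\subset C+\ep B$. You observe that this is simply condition $(a_1)$ of the Fisher convergence $S_n\xrightarrow{F}C$ already in hand (indeed it follows directly from $e(S_n,C)\le e\bigl((\,\overline{\cco X_1+\dots+\cco X_n}\,)/n,\,C\bigr)$ and hypothesis \eqref{eq4.18}), so it costs nothing. The paper instead takes a longer route: it forms $\cU=\{\cco X\,|\,X\in\cZ\}$, invokes Proposition \ref{proposition3.6} (Mazur) to get a compact convex $\cZ_0=\text{cl}[\co\,\cU]\subset\fC_c(\bE)$ containing every $\cco S_n$, applies Proposition \ref{proposition3.5} to upgrade $\cco S_n\xrightarrow{W}C$ to $\cco S_n\xrightarrow{h}C$, and then uses $S_n\subset\cco S_n\subset C+\ep B$. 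Your shortcut is legitimate and makes Propositions \ref{proposition3.5} and \ref{proposition3.6} unnecessary for this theorem; the paper's detour buys only the (unneeded for the stated conclusion) intermediate fact that the convexified averages converge in the Pompeiu--Hausdorff metric. Your closing remark about circularity is slightly off target as a description of the paper --- Proposition \ref{proposition3.5} is applied to $\{\cco S_n\}\subset\cZ_0$, whose $h$-compactness is supplied by Mazur's theorem rather than assumed --- but since you do not rely on that route, this does not affect the validity of your proof.
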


\begin{proof}
Let us associate to the compact set $\cZ\subset\fC(\bE)$ the following subsets $\cU$ and $\text{co}\cU$ of $\fC_c(\bE)$ defined as follows,
\begin{gather*}
\cU = \{Y\in\fC_c(\bE)\;|\; Y=\cco X \;\text{for some}\;X\in\cZ\}\\
\text{co}\cU = \{Y\in\fC_c(\bE)\;|\; Y=\overline{\sum_{i=1}^d \l_i Y_i},\;\text{where}\; Y_i\in\cU, \l_i\in [0,1],\sum_{i=1}^d\l_i=1, d\in\bN\}
\end{gather*}
Since $\cZ$ is compact and, for arbitrary $A_1,A_2\in\fC(\bE)$, $h(\cco A_1,\cco A_2)\leq h(A_1,A_2)$, it follows that $\cU$ is compact. Hence, by Proposition \ref{proposition3.6}, the set $\cZ_0 =\text{cl}[\text{co}\cU]$ (the closure is taken in the $h$-metric) is a convex and compact set in $\fC_c(\bE)$.

Let $\{X_n\}_{n=1}^{+\infty}\subset\cZ$ be a sequence satisfying \eqref{eq4.18} with $C\in \fC_c(\bE)$ compact and convex. Hence, by Theorem \ref{theo4.1},
\begin{equation}\label{eq4.19}
\frac{\overline{X_1+\dots +X_n}}{n}\xrightarrow{F} C \qquad \text{as}\; n\to+\infty.
\end{equation}

Let $\ep>0$ be arbitrary. From \eqref{eq4.19}, for each $u\in C$ there exists $n_u\in\bN$ such that
\begin{equation}\label{eq4.20}
\frac{\overline{X_1+\dots +X_n}}{n} \cap B(u,\ep) \neq \emptyset\qquad \text{for every}\; n\geq n_u.
\end{equation}
As $\{B(u,\ep)\}_{u\in C}$ is an open covering of $C$ it admits a finite subcovering, $i.e.$, there exist $u_1,\dots,u_d\in C$ such that
\begin{equation}\label{eq4.21}
C\subset \bigcup_{i=1}^{d} B\left( u_i,\frac{\ep}{2}\right) .
\end{equation}
Set $n^{\prime}=\max \{n_{u_1},\dots,n_{u_d}\}$. Let $n\geq n^{\prime}$ be arbitrary. By \eqref{eq4.20} (with $u_i$ in the place of $u$) it follows that each ball $B(u_i,\ep/2)$, $i=1,\dots,d$, contains some points of $(\overline{X_1+\dots +X_n})/n$ and thus, by \eqref{eq4.21},
\begin{equation}\label{eq4.22}
C\subset \frac{\overline{X_1+\dots +X_n}}{n} +\ep B \qquad \text{for every}\; n\geq n^{\prime}.
\end{equation}
On the other hand for each $n\in\bN$
\begin{equation}\label{eq4.23}
\cco \frac{\overline{X_1+\dots +X_n}}{n} = \frac{\overline{\cco X_1+\dots +\cco X_n}}{n}
\end{equation}
and thus, as each $\cco X_i$, $i=1,\dots,n$, is in the convex set $\cZ_0$, it follows that $\cco (\,\overline{X_1+\dots +X_n}\,)/n \in\cZ_0$ for every $n\in\bN$. Moreover, $\cZ_0$ and $C$ are compact and 
\begin{equation*}
\cco \frac{X_1+\dots +X_n}{n}\xrightarrow{F} C\qquad \text{as}\; n\to+\infty, 
\end{equation*}
by virtue of \eqref{eq4.23} and \eqref{eq4.18}. Hence by Proposition \ref{proposition3.5},
\begin{equation*}
\cco \frac{\overline{X_1+\dots +X_n}}{n} \xrightarrow{h} C\qquad \text{as}\;n\to+\infty.
\end{equation*}
Consequently, there exists $n^{\prime\prime}\in\bN$ such that
\begin{equation}\label{eq4.24}
\frac{\overline{X_1+\dots +X_n}}{n} \subset \cco \frac{\overline{X_1+\dots +X_n}}{n} \subset C+\ep B \qquad \text{for every}\; n\geq n^{\prime\prime}.
\end{equation}
Combining \eqref{eq4.22} and \eqref{eq4.24} yields
\begin{equation*}
h\left( \frac{\overline{X_1+\dots +X_n}}{n} , C\right) <\ep \qquad \text{for every}\; n\geq n_0,
\end{equation*}
where $n_0 =\max\{n^{\prime},n^{\prime\prime}\}$. Hence $(\,\overline{X_1+\dots +X_n}\,)/n \xrightarrow{h} C$ as $n\to+\infty$, completing the proof.
\end{proof}

\section{Expectations of random sets in non separable Banach spaces}\label{section5}

In this section we define a notion of expectation $E(F)$ for random sets $F: \Omega \to \fC(\bE)$ where the underlying Banach space $\bE$ is not necessarily separable. A few properties are reviewed and some proofs will be given since, in our non separable setting, the Kuratowski-Ryll Nardzewski theorem is not valid. For convex valued random sets $F: \Omega \to \fC_c(\bE)$ the expectation is proven to be consistent with that obtained by using the classic R\text{\r{a}}dstr$\Ddot{\text o}$m embedding.

In what follows $(\O,\sA,P)$ is a complete probability space without atoms. Following Hille and Phillips \cite[pp. 71-73]{Hille1957}, a map $G:\O\to\fC(\bE)$ is said to be \textit{countably-valued} if it admits a \textit{representation} of the form
\begin{equation*}
G(\o)=\sum_{i=0}^{+\infty} A_i \chi_{\O_i}(\o) \qquad \o\in\O
\end{equation*}
where $\{A_i\}_{i=1}^{+\infty}\subset \fC(\bE)$ and $\{\O_i\}_{i=1}^{+\infty}$ is a measurable partition of $\O$, $i.e.$, $\{\O_i\}_{i=1}^{+\infty}$ is a partition of $\O$ consisting of sets $\O_i\in\sA$.

\begin{Remark}
It is worth noting that a countably valued map can have several representations.
\end{Remark}

A map $F: \Omega \to \fC(\bE)$ is said to be \textit{strongly measurable} if there exists a sequence $\{G_n\}_{n=1}^{+\infty}$ of countably-valued maps $G_n : \Omega \to \fC(\bE)$ which converge to $F$ uniformly almost surely (a.s.) on $\O$, $i.e.$,
\begin{equation*}
\lim_{n\to+\infty} h_{\infty}(G_n,F)=0 \qquad \text{where}\quad h_{\infty}(G_n,F)=\underset{\o\in\O}{\text{ess sup}}\, h(G_n(\o),F(\o)).
\end{equation*}
It is evident that a countably-valued map is strongly measurable.

A single-valued map $f:\O\to\bE$ is said to be \textit{measurable} if there exists a sequence of countably-valued maps $g_n:\O\to\bE$ which converges uniformly a.s. on $\O$ to $f$, $i.e.$,
\begin{equation*}
\lim_{n\to+\infty} \| g_n-f\|_{\infty}=0  \qquad \text{where}\quad \| g_n-f\|_{\infty}=\underset{\o\in\O}{\text{ess sup}}\, \|g_n(\o)-f(\o)\|.
\end{equation*}

A map $F: \Omega \to \fC(\bE)$ is said to be \textit{weakly measurable} if for every $x\in\bE$ the map $\o\to d(x,F(\o))$ is measurable.

The definition of a \textit{simple} map $G: \Omega \to \fC(\bE)$ is as in the single-valued case.

\begin{Remark}
Let $F: \Omega \to \fC(\bE)$ be a strongly measurable map. Then, we have:\\
$(a_1)$ $F$ is weakly measurable; $(a_2)$ $\|F\|$ is measurable, where $\|F\| :\O\to [0,+\infty)$ is defined by $\|F\|(\o)=\|F(\o)\|$, $\o\in\O$; $(a_3)$ $\cco F$ is strongly measurable, where $\cco F:\O\to\fffC$ is defined by $(\cco F)(\o)=\cco F(\o)$, $\o\in\O$. Moreover, $F: \Omega \to \fC(\bE)$ is strongly measurable if and only if there exists a sequence $\{G_n\}_{n=1}^{+\infty}$ of simple maps $G_n: \Omega \to \fC(\bE)$ converging to $F$ a.s. on $\O$.
\end{Remark}

The meaning of $L^p(\O,\bE)$, $1\leq p\leq +\infty$, with the usual $\|\cdot\|_p$-norm is the standard one. In particular,
\begin{equation*}
L^1(\O,\bE) = \{ f: \O\to\bE \;|\; f\; \text{is measurable and}\; \int_{\O}\negthickspace\|f\|dP <+\infty\}
\end{equation*}
and thus each $f\in L^1(\O,\bE)$ is integrable. Here and in what follows integrability and measurability for a function $f$ are understood in the sense of Bochner.

Now set
\begin{gather*}
\bL^1 (\O,\fC(\bE)) = \left\lbrace F: \O\to\fC(\bE) \;|\; F\; \text{is s-measurable and}\; \int_{\O}\negthickspace\|F\|dP <+\infty\right\rbrace \\
\bL^1 (\O,\fC_c(\bE)) = \left\lbrace F: \O\to\fC_c(\bE) \;|\; F\; \text{is s-measurable and}\; \int_{\O}\negthickspace\|F\|dP <+\infty\right\rbrace
\end{gather*}
and, furthermore, let
\begin{gather*}
\bZ^1 (\O,\fC(\bE)) = \left\lbrace F: \O\to\fC(\bE) \;|\; F\; \text{is count-valued and}\; \int_{\O}\negthickspace\|F\|dP <+\infty\right\rbrace \\
\bZ^1(\O,\fC_c(\bE)) = \left\lbrace F: \O\to\fC_c(\bE) \;|\; F\; \text{is count-valued and}\; \int_{\O}\negthickspace\|F\|dP <+\infty\right\rbrace.
\end{gather*}
In the above definitions s-measurable and count-valued stand for strongly measurable and countable-valued respectively. In what follows an element $F\in\bL^1 (\O,\fC(\bE))$ will also be called a \textit{random} set.\\
For any $F\in \bL^1 (\O,\fC(\bE))$ we define
\begin{gather*}
\text{S}^1_F = \left\lbrace f: \O\to\bE \;|\; f\; \text{is a measurable selector of}\; F\right\rbrace \\
\text{V}^1_F = \left\lbrace f: \O\to\bE \;|\; f\; \text{is a countably-valued selector of}\; F\right\rbrace.
\end{gather*}
Here by a \textit{measurable} (resp. \textit{countably-valued}) selection of $F$ we mean a measurable (resp. countably-valued) map $f:\O\to\bE$ satisfying $f(\o)\in F(\o)$, $\o\in\O$. Clearly $\text{S}^1_F$ and $\text{V}^1_F$ are (perhaps empty) subsets of $L^1(\O,\bE)$. The meaning of $\bL^1 (\O,\cW)$ and $\bZ^1 (\O,\cW)$, where $\cW$ is a nonempty subset of $\fB(\bE)$, is evident.

\begin{Remark}\label{remark5.3}
$(a_1)$ For each $G\in \bZ^1 (\O,\fC(\bE))$ the set $\text{V}^1_G$ is nonempty; $(a_2)$ Let $G\in \bZ^1 (\O,\fC(\bE))$ and $g:\O\to\bE$ be bounded countably-valued maps satisfying, for some $\a>0$, $d(g(\o),G(\o))<\a$ a.s. on $\O$. Then there exists a bounded countably-valued map $\tilde{g}\in\text{V}^1_G$ such that $\|\tilde{g}-g\|_{\infty}\leq\a$.
\end{Remark}

The following measurable selection theorem is a variant of the Kuratowski-Ryll Nardzewski theorem \cite{Kuratowski1965, kuratowski1966}.

\begin{theorem}\label{theo5.1}
For each $F\in \bL^1 (\O,\fC(\bE))$ the set $\text{S}^1_F$ is nonempty.
\end{theorem}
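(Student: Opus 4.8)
The plan is to exploit strong measurability as a substitute for the (here unavailable) Kuratowski--Ryll-Nardzewski theorem: since $F$ is a uniform a.s.\ limit of countably-valued maps, I would manufacture a measurable selector as a uniform a.s.\ limit of countably-valued selectors of the approximants, invoking the selection properties recorded in Remark~\ref{remark5.3}. Concretely, from the definition of strong measurability I would first extract (by passing to a subsequence of the given approximating sequence) a sequence $\{G_n\}_{n=1}^{+\infty}$ of countably-valued maps with $h_\infty(G_n,F)<2^{-n}$. The target $f$ will be $\lim_n g_n$, where each $g_n\in\text{V}^1_{G_n}$ is a countably-valued selector of $G_n$, the $g_n$ being chosen so as to form a uniformly a.s.\ Cauchy sequence.

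The core construction proceeds by induction. I would start from a bounded countably-valued selector $g_1\in\text{V}^1_{G_1}$, whose existence is guaranteed by Remark~\ref{remark5.3}$(a_1)$. Given a bounded $g_n\in\text{V}^1_{G_n}$, one has $g_n(\o)\in G_n(\o)$, hence
\[
d(g_n(\o),G_{n+1}(\o))\le h(G_n(\o),G_{n+1}(\o))\le h(G_n(\o),F(\o))+h(F(\o),G_{n+1}(\o))<2^{-n+1}
\]
a.s.\ on $\O$. Applying Remark~\ref{remark5.3}$(a_2)$ with $G=G_{n+1}$, $g=g_n$ and $\a=2^{-n+1}$ produces a bounded countably-valued $g_{n+1}\in\text{V}^1_{G_{n+1}}$ with $\|g_{n+1}-g_n\|_\infty\le 2^{-n+1}$. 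Since $\sum_n 2^{-n+1}<+\infty$, the sequence $\{g_n\}$ is uniformly a.s.\ Cauchy; as $\bE$ is complete it converges uniformly a.s.\ to a map $f:\O\to\bE$, which is therefore measurable, being a uniform a.s.\ limit of countably-valued maps. Finally, from $g_n(\o)\in G_n(\o)$ and $h_\infty(G_n,F)<2^{-n}$ one gets $d(g_n(\o),F(\o))<2^{-n}$ a.s.; letting $n\to+\infty$ and using the $1$-Lipschitz continuity of $d(\cdot,F(\o))$ yields $d(f(\o),F(\o))=0$, whence $f(\o)\in F(\o)$ because $F(\o)$ is closed. Thus $f\in\text{S}^1_F$ (and $\|f(\o)\|\le\|F(\o)\|$ with $\|F\|\in L^1$, so $f$ is integrable).

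The one genuine obstacle is that Remark~\ref{remark5.3}$(a_2)$ demands a \emph{bounded} input $g$, whereas a countably-valued selector of $G_1$ need only be integrable. To secure boundedness throughout, I would first reduce to the case $\|F\|\le M$ a.s. Using that $\|F\|$ is measurable (as noted after the definition of strong measurability), set $\O_m=\{\o\in\O\mid m-1\le\|F(\o)\|<m\}$, a countable measurable partition of $\O$ on which $\|F\|$, and hence every $G_n$, is bounded. Carrying out the construction above on each $\O_m$ gives a measurable selector $f^{(m)}$ of $F\rest\O_m$, and I would glue these into $f(\o)=f^{(m)}(\o)$ for $\o\in\O_m$. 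The only delicate point in the gluing is to keep the approximation rate uniform in $m$: because the bound $2^{-n+1}$ in the inductive step is independent of $m$, the piecewise maps $\hat g_n$ defined by $\hat g_n=g_n^{(m)}$ on $\O_m$ are countably-valued on all of $\O$ and satisfy $\|\hat g_n-f\|_\infty\to 0$, so that $f$ is measurable on $\O$. This reduction, rather than any intrinsic selection difficulty, is where the argument must be handled with care.
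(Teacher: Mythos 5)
Your proof is correct and follows essentially the same route as the paper's: reduce to the case of bounded $\|F\|$ via the partition $\O_m=\{m-1\le\|F\|<m\}$, then build a uniformly Cauchy chain of countably-valued selectors $g_n\in\text{V}^1_{G_n}$ using Remark~\ref{remark5.3} and pass to the limit. Your closing observation that the rate $2^{-n+1}$ is uniform in $m$, so the glued approximants still converge uniformly on all of $\O$, is a point the paper's gluing step leaves implicit, but it is the same argument.
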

\begin{proof}
Let $F\in \bL^1 (\O,\fC(\bE))$. Suppose first that $F$ is bounded. Let $\{G_n\}_{n=1}^{+\infty}\subset \bZ^1 (\O,\fC(\bE))$ be a sequence of countably-valued maps $G_n:\O\to\fC(\bE)$, given by
\begin{equation}\label{eq5.1}
G_n(\o) = \sum_{i=1}^{+\infty}A_i^{n} \chi_{\O_i^{n}}(\o) \qquad \o\in\O
\end{equation}
where, for each fixed $n\in\bN$, $\{A_i^{n}\}_{i=1}^{+\infty}\subset\ffC$ and $\{\O_i^{n}\}_{i=1}^{+\infty}$ is a measurable partition of $\O$, such that $G_n$ converges to $F$ uniformly a.s. on $\O$. Without loss of generality, passing to a subsequence if necessary, with the same notation as before, we can assume that
\begin{equation*}
 h_{\infty}(G_n,F)<\frac{\ep_n}{2} \qquad \text{for every}\; n\in\bN,
\end{equation*}
where $\ep_n =1/2^n$. Hence,
\begin{equation*}
 h_{\infty}(G_n,G_{n+1})<\ep_n \qquad \text{for every}\; n\in\bN.
\end{equation*}
Fix $g_1\in\text{V}^1_{G_1}$. Since $d(g_1(\o),G_2(\o))\leq h(G_1(\o),G_2(\o))\leq h_{\infty}(G_1,G_2)<\ep_1$\, a.s. on $\O$, in view of Remark \ref{remark5.3}, $(a_1)$, there exists $g_2\in\text{V}^1_{G_1}$ such that
\begin{equation*}
 \| g_2-g_1\|_{\infty}\leq\ep_1.
\end{equation*}
Clearly $g_2\in L^{\infty}(\O,\bE)$, for $g_2$ is bounded. Similarly, we have $d(g_2(\o),G_3(\o))\leq h(G_2(\o),G_3(\o))\leq h_{\infty}(G_2,G_3)<\ep_2$ a.s. on $\O$, and thus there exists $g_3\in\text{V}^1_{G_3}$ satisfying 
\begin{equation*}
 \| g_3-g_2\|_{\infty}\leq\ep_2,
\end{equation*}
and evidently $g_3\in L^{\infty}(\O,\bE)$. By this procedure one can construct a sequence $\{g_n\}_{n=1}^{+\infty}\subset L^{\infty}(\O,\bE)$ with $g_n\in\text{V}^1_{G_n}$ such that
\begin{equation*}
 \| g_{n+1}-g_n\|_{\infty}\leq\ep_n \qquad \text{for every}\; n\in\bN.
\end{equation*}
Now $\{g_n\}_{n=1}^{+\infty}$ is a Cauchy sequence in $L^{\infty}(\O,\bE)$ and thus it converges to some $\phi\in L^{\infty}(\O,\bE)$. Since
\begin{equation*}
d(\phi(\o),F(\o))\leq \|\phi(\o)-g_n(\o)\|+ d(g_n(\o),G_n(\o))+ h(G_n(\o),F(\o)) \quad \text{a.s. on}\;\O
\end{equation*}
and, for $n\to+\infty$, $\|\phi(\o)-g_n(\o)\|_{\infty}\to 0$ and $h_{\infty}(G_n(\o),F(\o))\to 0$ as $n\to+\infty$, it follows that $\phi$ is a measurable selection of $F$ and thus $\phi\in \text{S}^1_F$.

Now suppose that $F$ is not bounded. For $k\in\bN$ let $\O_k=\{\o\in\O\;|\; k-1\leq \|F(\o)\|< k\}$ and observe that $\{\O_k\}_{k=1}^{+\infty}$ is a measurable partition of $\O$. For $k\in\bN$ define $F_k:\O\to\ffC$ by $F_k(\o)=F(\o)$, $\o\in\O_k$. As $F_k$ is strongly measurable and bounded on $\O_k$ it admits a measurable selection, say $f_k:\O_k\to\bE$. Then the map $f:\O\to\bE$ given by
\begin{equation*}
f(\o) = \sum_{i=1}^{+\infty}f_k(\o) \chi_{\O_k}(\o) \qquad \o\in\O,
\end{equation*}
is a measurable selection of $F$. Therefore $\text{S}^1_F\neq\emptyset$, completing the proof.
\end{proof}

\begin{proposition}\label{proposition5.2}
Let $F\in\bL^1 (\O,\fC(\bE))$ and $r>0$. Then, for each $\s>0$,
\begin{equation}\label{eq5.2}
\text{S}^1_{\overline{F+ rB}}\subset \text{S}^1_F + (r+\s)\text{S}^1_B.
\end{equation}
\end{proposition}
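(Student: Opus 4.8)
The plan is to reduce the asserted inclusion to a single approximate selection statement. Fix $g\in\text{S}^1_{\overline{F+rB}}\subset L^1(\O,\bE)$; I want to produce a measurable selector $f\in\text{S}^1_F$ with $\|g(\o)-f(\o)\|\le r+\s/2$ a.s. Once this is available, I set $b=(g-f)/(r+\s)$, a measurable map with $\|b(\o)\|\le(r+\s/2)/(r+\s)<1$ a.s.; hence $b(\o)\in B$ and $b\in\text{S}^1_B$, while $f\in\text{S}^1_F$ and $g=f+(r+\s)b$, which is exactly $g\in\text{S}^1_F+(r+\s)\text{S}^1_B$. Pointwise feasibility of such an $f$ is immediate: from $g(\o)\in\overline{F(\o)+rB}$ one reads off $d(g(\o),F(\o))\le r$, so $F(\o)$ meets the ball $B(g(\o),r+\s/2)$. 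The whole difficulty is measurability of the selection, and since $\bE$ need not be separable the Kuratowski--Ryll Nardzewski theorem is unavailable; I would therefore follow the architecture of the proof of Theorem \ref{theo5.1}.

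First I would reduce to the bounded case exactly as in Theorem \ref{theo5.1}, partitioning $\O=\bigcup_{k}\O_k$ with $\O_k=\{\o\;|\;k-1\le\|F(\o)\|<k\}$. On each $\O_k$ the map $F$ is bounded, and $g$ is bounded there too, since picking $y\in F(\o)$ with $\|g(\o)-y\|<r+1$ gives $\|g(\o)\|<r+1+\|F(\o)\|<r+1+k$. The selectors obtained on the pieces are glued by $f=\sum_k f_k\chi_{\O_k}$, which is measurable on the measurable partition $\{\O_k\}$; and because $\|f(\o)-g(\o)\|\le r+\s/2$ with $g\in L^1$, the glued $f$ automatically lies in $\text{S}^1_F$.

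For the bounded case the key tool is Remark \ref{remark5.3} $(a_2)$. I would choose countably-valued $G_n$ with $h_{\infty}(G_n,F)<\ep_n/2$, so that $h_{\infty}(G_n,G_{n+1})<\ep_n$, where $\ep_n=\s/2^{n+3}$, and also fix a single countably-valued $\hat g$ with $\|\hat g-g\|_{\infty}<\ep_1$. Since $d(\hat g(\o),G_1(\o))\le\|\hat g(\o)-g(\o)\|+d(g(\o),F(\o))+h(F(\o),G_1(\o))<r+2\ep_1$ a.s., Remark \ref{remark5.3} $(a_2)$ furnishes $f_1\in\text{V}^1_{G_1}$ with $\|f_1-\hat g\|_{\infty}\le r+2\ep_1$, hence $\|f_1-g\|_{\infty}\le r+3\ep_1$. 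Then inductively, from $f_n\in\text{V}^1_{G_n}$ one has $d(f_n(\o),G_{n+1}(\o))\le h_{\infty}(G_n,G_{n+1})<\ep_n$ a.s., and Remark \ref{remark5.3} $(a_2)$ yields $f_{n+1}\in\text{V}^1_{G_{n+1}}$ with $\|f_{n+1}-f_n\|_{\infty}\le\ep_n$.

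Finally I would let $n\to+\infty$. The sequence $\{f_n\}$ is Cauchy in $L^{\infty}(\O,\bE)$, so it converges to some $f$; as $d(f(\o),F(\o))\le\|f(\o)-f_n(\o)\|_{\infty}+h_{\infty}(G_n,F)\to 0$ and $F(\o)$ is closed, $f$ is a selector of $F$, while $\|f-g\|_{\infty}\le\|f_1-g\|_{\infty}+\sum_{n\ge 1}\|f_{n+1}-f_n\|_{\infty}\le r+3\ep_1+\sum_{n\ge 1}\ep_n\le r+\tfrac{\s}{2}$ by the choice of $\ep_n$. This uniform control of the selector's distance to $g$ is precisely the main obstacle: unlike Theorem \ref{theo5.1}, where the iteration starts from an arbitrary selector of $G_1$, here one must seed it with a countably-valued approximation $\hat g$ of $g$ and keep the successive corrections summable, so that the limit stays within $r+\s/2$ of $g$ while still landing in $F$.
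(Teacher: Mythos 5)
Your proof is correct and follows essentially the same route as the paper: reduce to the bounded case, replace $g$ by a countably-valued approximation, iterate Remark \ref{remark5.3} $(a_2)$ along a rapidly converging sequence of countably-valued approximations $G_n$ of $F$ to build a Cauchy sequence of selectors, pass to the $L^{\infty}$ limit to obtain $f\in\text{S}^1_F$ with $\|g-f\|_{\infty}\le r+\s/2$, and set $b=(g-f)/(r+\s)$. The only differences from the paper's argument are the choice of constants (the paper uses $\ep_n=\s/2^{n+2}$ and settles for $\|\phi-f\|_{\infty}<r+\s$) and that you spell out the unbounded-case gluing, which the paper delegates to the proof of Theorem \ref{theo5.1}.
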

\begin{proof}
By Theorem \ref{theo5.1} the sets $\text{S}^1_{\overline{F+ rB}}$ and $\text{S}^1_F$ are nonempty and, clearly, $\text{S}^1_B\neq\emptyset$.

{\bf Claim.} Let $\s>0$. For every $\phi\in\text{S}^1_{\overline{F+ rB}}$ there exist $f\in\text{S}^1_F$ and $\g\in\text{S}^1_{B}$ such that
\begin{equation}\label{eq5.3}
\phi(\o)= f(\o) +(r+\sigma)\g(\o) \qquad \text{a.s. on}\;\O.
\end{equation}
Indeed, suppose that $F$ and $\phi$ are bounded, and let $\phi\in\text{S}^1_{\overline{F+ rB}}$ be arbitrary. Thus
\begin{equation}\label{eq5.4}
\phi(\o)\in \overline{F(\o) +rB} \qquad \text{a.s. on}\;\O.
\end{equation}
Let $\phi_0:\O\to \bE$ be a countably-valued map satisfying
\begin{equation}\label{eq5.5}
\|\phi_0-\phi\|_{\infty}<\frac{\s}{8},
\end{equation}
and let $\{G_n\}_{n=1}^{+\infty}\subset \bZ^1(\O,\ffC)$ be a sequence of countably-valued maps $G_n:\O\to \ffC$, given by \eqref{eq5.1}, converging to $F$ uniformly a.s. on $\O$. Passing to a subsequence if necessary, we can assume without loss of generality that
\begin{equation}\label{eq5.6}
 h_{\infty}(G_n,F)<\frac{\ep_n}{2} \qquad \text{for every}\; n\in\bN,
\end{equation}
where $\ep_n=\s/2^{n+2}$. Hence
\begin{equation}\label{eq5.7}
 h_{\infty}(G_n,G_{n+1})<\ep_n \qquad \text{for every}\; n\in\bN.
\end{equation}
By virtue of \eqref{eq5.5}, \eqref{eq5.4} and \eqref{eq5.6} (with $n=1$) we have
\begin{gather*}
\phi_0(\o)\in \phi(\o) +\frac{\sigma}{8}B\subset \overline{F(\o)+rB}+\frac{\sigma}{4}B\subset (F(\o)+rB+\ep B)+\frac{\sigma}{4}B\\
\subset G_1(\o)+\left( r+\frac{\s}{4}+ \ep_1\right) B \subset G_1(\o)+\left( r+\frac{\s}{2}\right) B \qquad \text{a.s. on}\;\O,
\end{gather*}
and thus $d(\phi_0(\o),G_1(\o))<r+\s/2$ a.s. on $\O$. Then by virtue of Remark \ref{remark5.3} $(a_2)$, there exists a $g_1\in\text{V}^1_{G_1}$ such that
\begin{equation}\label{eq5.8}
\|g_1-\phi_0\|_{\infty}\leq r+\frac{\s}{2}B.
\end{equation}
Now, by \eqref{eq5.7} we have $d(g_1(\o),G_2(\o))\leq h(G_1(\o),G_2(\o))\leq h_{\infty}(G_1,G_2)<\ep_1$\, a.s. on $\O$, and thus there exists $g_2\in\text{V}^1_{G_2}$ such that $\|g_2-g_1\|_{\infty}\leq \ep_1$. By this procedure one can construct a sequence $\{g_n\}_{n=1}^{+\infty}\subset L^{\infty}(\O,\bE)$, with $g_n\in\text{V}^1_{G_n}$, satisfying
\begin{equation}\label{eq5.9}
\|g_{n+1}-g_n\|_{\infty}\leq \ep_n \qquad \text{for every}\; n\in\bN.
\end{equation}
Evidently, \eqref{eq5.9} implies that $\{g_n\}_{n=1}^{+\infty}$ is a Cauchy sequence in $L^{\infty}(\O,\bE)$ and thus, for some $f\in L^{\infty}(\O,\bE)$, $\|g_n-f\|_{\infty}\to 0$ as $n\to+\infty$. Moreover, by \eqref{eq5.6} we have $h_{\infty}(G_n,F)\to 0$ as $n\to+\infty$. Since $g_n(\o)\in G_n(\o)$ a.s. on $\O$, it follows that $f(\o)\in F(\o)$ a.s. on $\O$, that is $f$ is a measurable selection of $F$, and hence $f\in \text{S}^1_{F}$.

On the other hand in view of \eqref{eq5.8} and \eqref{eq5.9},
\begin{gather*}
\|g_{n+1}(\o)-\phi_0(\o)\| \leq \|g_1(\o)-\phi_0(\o)\| + \sum_{k=1}^n \|g_{k+1}(\o)-g_k(\o)\|\\
\leq r+\frac{\s}{2}+ \sum_{k=1}^n\ep_k < r+\frac{3}{4}\s \qquad \text{a.s. on}\;\O,
\end{gather*}
from which letting $n\to+\infty$ one has $\|f-\phi_0\|_{\infty}\leq r+3\s/4$. The latter and \eqref{eq5.5} imply
\begin{equation}\label{eq5.10}
\|\phi-f\|_{\infty}< r+\s.
\end{equation}
Now define $\g:\O\to\bE$ by $\g(\o)=(\phi(\o)-f(\o))/(r+\s)$ for $\o\in\O$ a.s. Since $\g\in\text{S}^1_{B}$, by \eqref{eq5.10}, $f\in\text{S}^1_{F}$ and, moreover,
\begin{equation*}
\phi(\o)=f(\o)+(\phi(\o)-f(\o))=f(\o)+(r+\s)\g(\o) \qquad \text{a.s. on}\;\O,
\end{equation*}
it follows that \eqref{eq5.3} is valid, whenever $F$ and $\phi$ are bounded. The general case, when $F$ and $\phi$ are not necessarily bounded, can be treated as in Theorem \ref{theo5.1} and thus the proof is omitted. Hence the Claim is proved. The statement \eqref{eq5.2} is an immediate consequence of the Claim. This completes the proof.
\end{proof}

Let us recall that the sum of a series $\sum_{i=1}^{+\infty}C_i$, where $C_i\in\fffC$, is a set $C\in\fffC$ (if it exists) such that $\lim_{n\to+\infty}h(\overline{\sum_{i=1}^{n}C_i},C)=0$. Moreover, the sum $C$ exists and is unique if the series is absolutely convergent, $i.e.$, if \,$\sum_{i=1}^{+\infty}\|C_i\|<+\infty$.

\begin{definition}\label{def5.1}
Let $F\in\bL^1(\O,\ffC)$. The \textit{Aumann integral} of $F$ on $\O$ (see \cite{Aumann1965, Hu1997}) is defined by
\begin{equation*}
\aint FdP = \overline{\left\lbrace\int_{\O}fdP \;|\; f\in\text{S}^1_{F}\right\rbrace}.
\end{equation*}
\end{definition}

\begin{Remark}\label{remark5.4}
The above definition is meaningful since $\text{S}^1_{F}\neq \emptyset$, by Theorem \ref{theo5.1}. Moreover, the Aumann integral of $F$ is a set $C\in\fffC$ which, obviously, exists and is unique.
\end{Remark}

Let $G\in\bZ^1(\O,\fffC)$ be a countably-valued map $G:\O\to\fffC$ with representation given by
\begin{equation*}
G(\o)=\sum_{i=1}^{+\infty} A_i\chi_{\O_i}(\o) \qquad\o\in\O,
\end{equation*}
where $\{A_i\}_{i=1}^{+\infty}\subset\fffC$, and $\{\O_i\}_{i=1}^{+\infty}$ is a measurable partition of $\O$. The \textit{Hukuhara integral} of $G$ on $\O$ is defined by
\begin{equation*}
\hint GdP = \sum_{i=1}^{+\infty} A_i P(\O_i).
\end{equation*}

\begin{Remark}\label{remark5.5}
Since the above series is absolutely convergent, it converges to a set $C\in\fffC$, which exists and is unique. Moreover this set $C$ is independent of the representation of $G$ and thus the Hukuhara integral of $G\in\bZ^1(\O,\fffC)$ is meaningful.
\end{Remark}

\begin{definition}\label{def5.2}
Let $F\in\bL^1(\O,\fffC)$ and let $\{G_n\}_{n=1}^{+\infty}\subset \bZ^1(\O,\fffC)$ be a sequence of countably-valued maps converging to $F$ uniformly a.s. on $\O$. The \textit{Hukuhara integral} of $F$ on $\O$ \cite{Hukuhara1967} is defined by
\begin{equation*}
\hint FdP = \lim_{n\to+\infty}\hint G_ndP.
\end{equation*}
\end{definition}

\begin{Remark}\label{remark5.6}
As in \cite[p.79]{Hille1957}, it can be shown that the above limit exists and is unique, actually it is a set $C\in\fffC$. Moreover this set $C$ is independent of the particular sequence $\{G_n\}_{n=1}^{+\infty}\subset \bZ^1(\O,\fffC)$ used in the definition. Therefore the Hukuhara integral of $F\in\bL^1(\O,\fffC)$ is meaningful.
\end{Remark}

It is worth noting that the Aumann and Hukuhara integrals are well defined also when $\O$ is replaced by a set $\O^{\prime}\in\sA$.

\begin{proposition}\label{proposition5.3}
Let $\O^{\prime}\in\sA$. Then for each $C\in\fffC$,
\begin{equation}\label{eq5.11}
\Aint CdP = \Hint CdP.
\end{equation}
Moreover for each $C\in\ffC$,
\begin{equation}\label{eq5.12}
\Aint CdP = \Hint (\cco C)dP.
\end{equation}
\end{proposition}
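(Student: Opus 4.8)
The plan is to reduce both identities to explicit evaluations of the two integrals of the constant map and then prove the resulting set equalities by a double inclusion. First observe that the constant map $\o\mapsto C$ on $\O'$ is countably-valued, with the one-piece representation $C\,\chi_{\O'}$, so directly from the definition of the Hukuhara integral $\Hint C\,dP=P(\O')\,C$ and, likewise, $\Hint(\cco C)\,dP=P(\O')\,\cco C$. Thus \eqref{eq5.11} amounts to
\[
 \overline{\bigl\{\textstyle\int_{\O'}f\,dP\;|\;f\in\text{S}^1_C\bigr\}}=P(\O')\,C \qquad (C\in\fffC),
\]
where $\text{S}^1_C$ now denotes the measurable selectors of the constant map on $\O'$, and \eqref{eq5.12} to the same identity with $C$ replaced by $\cco C$ on the right, now for arbitrary $C\in\ffC$. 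The degenerate case $P(\O')=0$ is immediate, since then every integral vanishes and $P(\O')\,\cco C=\{0\}$; so I would assume $P(\O')>0$ throughout, in which case $P(\O')\,\cco C$ is a genuine element of $\fffC$.

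For the inclusion $\subseteq$ I would invoke the mean-value property of the Bochner integral. If $f\in\text{S}^1_C$ then $f(\o)\in C\subseteq\cco C$ a.s., and since $\cco C$ is closed and convex a Hahn-Banach separation argument shows that the average $P(\O')^{-1}\int_{\O'}f\,dP$ lies in $\cco C$: were it outside, a continuous functional $\ell$ and scalar $\alpha$ with $\ell\leq\alpha$ on $\cco C$ but $\ell(\text{average})>\alpha$ would contradict $\ell\bigl(P(\O')^{-1}\int_{\O'}f\,dP\bigr)=P(\O')^{-1}\int_{\O'}\ell(f)\,dP\leq\alpha$. Hence $\int_{\O'}f\,dP\in P(\O')\,\cco C$, and as the latter set is closed the closure of the set of such integrals is contained in it. For \eqref{eq5.11}, where $C=\cco C$ is already convex, this is exactly the required inclusion.

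For the reverse inclusion the two statements diverge. In the convex case \eqref{eq5.11} the constant selectors suffice: for each $c\in C$ the map $f\equiv c$ belongs to $\text{S}^1_C$ and $\int_{\O'}f\,dP=P(\O')\,c$, whence $P(\O')\,C\subseteq\{\int_{\O'}f\,dP\;|\;f\in\text{S}^1_C\}$ and \eqref{eq5.11} follows. For \eqref{eq5.12} this is not enough, and here the absence of atoms of $P$ is essential. Given $v\in\cco C$ and $\eta>0$, choose a finite convex combination $v'=\sum_{i=1}^k\lambda_i c_i$, with $c_i\in C$, $\lambda_i\geq0$, $\sum_i\lambda_i=1$, such that $\|v-v'\|<\eta$ (finite convex combinations of points of $C$ are dense in $\cco C$). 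Since the restriction of the non-atomic measure $P$ to $\O'$ is again non-atomic, by Sierpi\'nski's theorem I would partition $\O'$ into measurable sets $\O'_1,\dots,\O'_k$ with $P(\O'_i)=\lambda_i P(\O')$. The simple map $f=\sum_i c_i\chi_{\O'_i}$ is then a selector in $\text{S}^1_C$ with $\int_{\O'}f\,dP=\sum_i c_i\lambda_i P(\O')=P(\O')\,v'$, so $\|P(\O')\,v-\int_{\O'}f\,dP\|<P(\O')\,\eta$. Letting $\eta\to0$ places $P(\O')\,v$ in the closure, and as $v\in\cco C$ is arbitrary this yields $P(\O')\,\cco C\subseteq\overline{\{\int_{\O'}f\,dP\;|\;f\in\text{S}^1_C\}}$, completing \eqref{eq5.12}.

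I expect the main obstacle to be the $\supseteq$ direction of \eqref{eq5.12}: the use of the non-atomicity of $P$ to split $\O'$ into pieces of prescribed measure is precisely the convexification mechanism, and it is what forces the Aumann integral of the non-convex constant $C$ to fill out all of $P(\O')\,\cco C$ rather than merely $P(\O')\,C$. By contrast, the measurability of the constructed simple selectors and the density of finite convex combinations in $\cco C$ are routine, and the $\subseteq$ inclusion is a standard consequence of the Bochner mean-value property.
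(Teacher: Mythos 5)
Your proof is correct and follows essentially the same route as the paper's: both compute the Hukuhara integral of the constant map as $P(\O^{\prime})C$, obtain the inclusion $\subseteq$ from the mean-value property of the Bochner integral (which you justify by separation, where the paper merely asserts it), use constant selectors for \eqref{eq5.11}, and for \eqref{eq5.12} approximate a point of $\cco C$ by a finite convex combination and realize it as the integral of a simple selector built by splitting $\O^{\prime}$ into pieces of prescribed measure via non-atomicity (you cite Sierpi\'nski where the paper cites Liapunoff's convexity theorem, an inessential difference).
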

\begin{proof}
Indeed for each $\s\in\text{S}^1_C$,
\begin{equation*}
\int_{\O^{\prime}}\negthickspace\s dP \in CP(\O^{\prime})=\Hint CdP,
\end{equation*}
which implies that the set on the left hand side of \eqref{eq5.11} is contained in the set on the right hand side. Moreover,
\begin{equation*}
\Hint C dP = CP(\O^{\prime})=\left\lbrace\int_{\O^{\prime}}\negthickspace cdP \;|\;c\in C\right\rbrace \subset \Aint CdP,
\end{equation*}
and thus \eqref{eq5.11} holds.

Now consider \eqref{eq5.12}. Clearly by virtue of \eqref{eq5.11}
\begin{equation*}
\Aint C dP \subset \Aint(\cco C)dP=\Hint(\cco C)dP.
\end{equation*}

To prove the reverse inclusion, let $\ep>0$ and consider an arbitrary point $\bar{\xi}\in(\cco C)P(\O^{\prime})$. Take a $\xi\in(\co C)P(\O^{\prime})$ such that $\|\xi-\bar{\xi}\|<\ep$. Clearly, for some $c_i\in C$ and $\l_i>0$, $i=1,\dots,n$, with $\l_1+\dots+\l_n=1$, we have $\xi=P(\O^{\prime})(\l_1 c_1+\dots+\l_n c_n)$. Now $(\O,\sA,P)$ is a complete probability space without atoms, and thus by Liapunoff convexity theorem \cite{Diestel1967} there exists a measurable partition of $\O^{\prime}$, say $\{\O_i\}_{i=1}^{n}$, such that $P(\O_i)=\l_i P(\O^{\prime})$, $i=1,\dots,n$. Define $\s: \O^{\prime}\to\bE$ by $\s(\o)=c_1\chi_{\O_1}(\o)+\dots+c_n\chi_{\O_n}(\o)$, for $\o\in\O^{\prime}$. Clearly $\s\in\text{S}^1_C$. Moreover
\begin{equation*}
\xi=\sum_{i=1}^n \l_i c_i P(\O^{\prime})= \sum_{i=1}^n c_i P(\O_i)=\sum_{i=1}^n \int_{\O^{\prime}}c_i\chi_{\O_i}dP = \int_{\O^{\prime}}\s dP \in\Aint CdP,
\end{equation*}
and hence, as $\bar{\xi}\in \xi+\ep B$,
\begin{equation*}
\bar{\xi} \in \Aint CdP +\ep B.
\end{equation*}
Since $\bar{\xi}\in(\cco C)P(\O^{\prime})$ and $\ep>0$ are arbitrary, it follows that
\begin{equation*}
\Hint(\cco C)dP \subset \Aint CdP,
\end{equation*}
and thus \eqref{eq5.12} is valid. This completes the proof.
\end{proof}

By virtue of Proposition \ref{proposition5.3} we have

\begin{proposition}\label{proposition5.4}
Let $G_1\in\bZ^1(\O,\fffC)$, $G_2\in\bZ^1(\O,\ffC)$ and let $\O^{\prime}\in\sA$. Then,
\begin{equation*}
\Aint G_1 dP=\Hint G_1 dP \qquad\qquad \Aint G_2 dP=\Hint(\cco G_2)dP.
\end{equation*}
\end{proposition}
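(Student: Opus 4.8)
The plan is to reduce each of the two identities to the corresponding \emph{constant-valued} statement already secured in Proposition \ref{proposition5.3}, by exploiting the piecewise structure of a countably-valued map. Write the representations $G_1=\sum_{i=1}^{+\infty}A_i\chi_{\O_i}$ with $A_i\in\fffC$ and $G_2=\sum_{i=1}^{+\infty}B_i\chi_{\O_i}$ with $B_i\in\ffC$, where $\{\O_i\}_{i=1}^{+\infty}$ is a measurable partition of $\O$; intersecting with $\O^{\prime}$ yields the measurable partition $\{\O_i^{\prime}\}_{i=1}^{+\infty}$ of $\O^{\prime}$, $\O_i^{\prime}=\O_i\cap\O^{\prime}$, with each $\O_i^{\prime}\in\sA$. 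The integrability $\int_\O\|G_k\|\,dP<+\infty$ reads $\sum_i\|A_i\|P(\O_i)<+\infty$ (resp. with $B_i$), and this absolute convergence is the engine that makes every series below behave.

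First I would analyse the selectors. A measurable selector of $G_1$ is precisely a map whose restriction to each $\O_i^{\prime}$ is a measurable selector of the constant set $A_i$, and since measurability is local on a countable measurable partition these restrictions may be chosen independently and then glued back together. Hence, with $T_i$ denoting the per-piece set of selector integrals,
\[
\Big\{\int_{\O^{\prime}}f\,dP\ \Big|\ f\in\text{S}^1_{G_1}\Big\}
=\Big\{\sum_{i}t_i\ \Big|\ t_i\in T_i\Big\},
\qquad
T_i:=\Big\{\int_{\O_i^{\prime}}g\,dP\ \Big|\ g\in\text{S}^1_{A_i}\Big\},
\]
and analogously for $G_2$ with $T_i$ built from selectors of $B_i$. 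Each series $\sum_i t_i$ converges absolutely since $\|t_i\|\le\|A_i\|P(\O_i^{\prime})$ (resp. $\le\|B_i\|P(\O_i^{\prime})$), and each $T_i$ is nonempty by Theorem \ref{theo5.1} (or by constant selectors). Now Proposition \ref{proposition5.3} computes the per-piece closures: since $\overline{T_i}=\Aint[\O_i^{\prime}]$-type Aumann integral of the constant set, \eqref{eq5.11} gives $\overline{T_i}=A_iP(\O_i^{\prime})$ in the convex case, while \eqref{eq5.12} gives $\overline{T_i}=\cco B_i\,P(\O_i^{\prime})$ in the general case.

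The heart of the matter is then the identity
\[
\overline{\Big\{\sum_i t_i\ \Big|\ t_i\in T_i\Big\}}
=\sum_{i=1}^{+\infty}\overline{T_i},
\]
where the right-hand side is the Hukuhara series, i.e. the $h$-limit of the partial sums $\overline{\sum_{i=1}^{n}\overline{T_i}}$. I would prove this in two moves. Replacing each $T_i$ by $\overline{T_i}$ alters the infinite Minkowski sum only up to closure: any $t_i\in\overline{T_i}$ is approximated by elements of $T_i$, and the uniform tail bound $\sum_{i>n}\|\overline{T_i}\|<\ep$ lets one patch these approximations into a single convergent series, reducing the problem to the closed bounded convex summands $D_i:=\overline{T_i}$. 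For such summands I would show that $\overline{\{\sum_i y_i\mid y_i\in D_i\}}$ coincides with the absolutely convergent Hukuhara series $\sum_i D_i$: every $\sum_i y_i$ lies in the limit set because its partial sums lie in $\overline{\sum_{1}^{n}D_i}$ and $e\big(\overline{\sum_{1}^{n}D_i},\sum_iD_i\big)\to0$, while conversely any point of the limit set is reached by truncating at a level $n$ with small tail and completing the series with fixed summands, the error being controlled by $\sum_{i>n}\|D_i\|$.

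Assembling these, in the convex case $\Aint G_1\,dP=\sum_i A_iP(\O_i^{\prime})=\Hint G_1\,dP$, and in the general case $\Aint G_2\,dP=\sum_i\cco B_i\,P(\O_i^{\prime})=\Hint(\cco G_2)\,dP$, the final equalities being just the defining formula of the Hukuhara integral of a countably-valued map applied to $G_1$ and to $\cco G_2=\sum_i\cco B_i\chi_{\O_i}$. The main obstacle is precisely the interchange of closure with the infinite Minkowski summation in the displayed identity; everything hinges on the absolute convergence $\sum_i\|A_i\|P(\O_i)<+\infty$, which supplies the uniform tail control needed to upgrade pointwise, per-piece approximations to Hausdorff convergence of the whole series. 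The restriction to $\O^{\prime}$ and the possible unboundedness of the $G_k$ cause no extra difficulty, being absorbed into the same summability.
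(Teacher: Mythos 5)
Your proof is correct and follows exactly the route the paper intends: the paper offers no argument beyond the phrase ``By virtue of Proposition \ref{proposition5.3} we have,'' i.e.\ piecewise reduction over the measurable partition to the constant-set case. You have merely supplied the details the authors suppress, in particular the interchange of closure with the absolutely convergent infinite Minkowski sum, which is handled correctly via the tail bound $\sum_{i>n}\|A_i\|P(\O_i)<\ep$.
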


\begin{proposition}\label{proposition5.5}
Let $F\in\bL^1(\O,\fffC)$ and let $\O^{\prime}\in\sA$. Then,
\begin{equation}\label{eq5.13}
\Aint F dP=\Hint F dP.
\end{equation}
\end{proposition}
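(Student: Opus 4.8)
The plan is to reduce the claim to the countably-valued case, which Proposition \ref{proposition5.4} already settles, and then pass to the limit along a uniform approximation of $F$. First I would use the strong measurability of $F$ to fix a sequence $\{G_n\}_{n=1}^{+\infty}$ converging to $F$ uniformly a.s. on $\O$; replacing each $G_n$ by $\cco G_n$ if necessary (and using $h(\cco A,\cco B)\le h(A,B)$ together with $\cco F=F$) one may assume $\{G_n\}\subset\bZ^1(\O,\fffC)$. For every $n$ Proposition \ref{proposition5.4} gives $\Aint G_n dP=\Hint G_n dP$, while by Definition \ref{def5.2} the Hukuhara integrals $\Hint G_n dP$ converge in the $h$-metric to $\Hint FdP$. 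Hence the whole matter reduces to proving that the Aumann integrals converge as well, i.e. $\Aint G_n dP\xrightarrow{h}\Aint FdP$; the conclusion \eqref{eq5.13} then follows from the uniqueness of $h$-limits in $\fffC$.

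The key step is exactly this convergence of the Aumann integrals, and it is where Proposition \ref{proposition5.2} carries the argument. Put $r_n=h_{\infty}(G_n,F)$, so that $r_n\to0$ and, a.s. on $\O$, both $G_n(\o)\subset\overline{F(\o)+r_nB}$ and $F(\o)\subset\overline{G_n(\o)+r_nB}$. Fix $\s>0$. For the first excess, any $f\in\text{S}^1_{G_n}$ lies in $\text{S}^1_{\overline{F+r_nB}}$, so by Proposition \ref{proposition5.2} it decomposes as $f=g+(r_n+\s)\g$ with $g\in\text{S}^1_F$ and $\g\in\text{S}^1_B$; integrating over $\O'$ and using $\|\g(\o)\|\le1$ a.s. yields $\bignorm{\int_{\O'}fdP-\int_{\O'}gdP}\le(r_n+\s)P(\O')$. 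Since $\Aint FdP$ is closed and $\int_{\O'}gdP\in\Aint FdP$, this bounds $e(\Aint G_n dP,\Aint FdP)$ by $(r_n+\s)P(\O')$. The symmetric estimate is identical: because $G_n\in\bZ^1(\O,\fffC)\subset\bL^1(\O,\fffC)$, Proposition \ref{proposition5.2} applies with $G_n$ in place of $F$, and the inclusion $F(\o)\subset\overline{G_n(\o)+r_nB}$ lets one write each $g\in\text{S}^1_F$ as $f+(r_n+\s)\g$ with $f\in\text{S}^1_{G_n}$, giving $e(\Aint FdP,\Aint G_n dP)\le(r_n+\s)P(\O')$. Letting $\s\to0$ gives $h(\Aint G_n dP,\Aint FdP)\le r_nP(\O')\le r_n\to0$, as required.

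The genuinely delicate ingredient is the selector decomposition supplied by Proposition \ref{proposition5.2}. The elementary pointwise fact that $f(\o)\in\overline{F(\o)+r_nB}$ forces $f(\o)=g(\o)+(r_n+\s)\g(\o)$ for suitable points $g(\o)\in F(\o)$, $\g(\o)\in B$ is immediate, but upgrading it to a genuinely \emph{measurable} pair $g\in\text{S}^1_F$, $\g\in\text{S}^1_B$ is precisely what needs the countably-valued approximation machinery, and this is the only place where the non-separability of $\bE$ is felt. Once that decomposition is available the remainder is the routine limiting argument above, and nowhere is convexity of $F$ used beyond arranging that the approximants $G_n$ can be taken convex-valued so that Proposition \ref{proposition5.4} applies in its first form.
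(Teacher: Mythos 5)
Your proof is correct, and it diverges from the paper's in one substantive respect. Both arguments obtain the inclusion of the Hukuhara integral into the Aumann integral in the same way: the chain \eqref{eq5.18}, namely $\Hint G_n\,dP=\Aint G_n\,dP\subset\Aint F\,dP+(r_n+\s)P(\O')B$ via Proposition \ref{proposition5.2} applied to $F$, is exactly your bound on $e(\Aint G_n\,dP,\Aint F\,dP)$. The divergence is in the opposite inclusion. The paper argues selector by selector: it fixes $f\in\text{S}^1_F$, approximates it uniformly by countably-valued maps $\phi_n$, observes that $\phi_n$ selects from an inflated countably-valued map $\tilde G_n$ converging uniformly to $F$, and concludes $\int_{\O'}f\,dP\in\Hint F\,dP$ by passing to the limit in $\Hint\tilde G_n\,dP$. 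You instead invoke Proposition \ref{proposition5.2} a second time, with the countably-valued $G_n$ in the role of $F$, to decompose each $g\in\text{S}^1_F\subset\text{S}^1_{\overline{G_n+r_nB}}$ as $f+(r_n+\s)\g$ with $f\in\text{S}^1_{G_n}$, obtaining the symmetric bound $e(\Aint F\,dP,\Aint G_n\,dP)\le(r_n+\s)P(\O')$ and hence $h$-convergence of the Aumann integrals; uniqueness of $h$-limits then finishes the argument. Your route is more uniform, since both excesses are controlled by the same lemma, and it also dispenses with the paper's separate treatment of unbounded $F$: nothing in your limiting argument uses boundedness beyond $\int_{\O}\|F\|\,dP<+\infty$ (note $\|G_n\|\le\|F\|+r_n$ a.s., so the approximants are automatically in $\bZ^1(\O,\fffC)$). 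The price is the extra application of Proposition \ref{proposition5.2} to the maps $G_n$, which is legitimate because countably-valued maps with integrable norm belong to $\bL^1(\O,\fC(\bE))$. Two cosmetic points you should make explicit: take $r_n>0$ (Proposition \ref{proposition5.2} is stated for $r>0$; enlarge $r_n$ slightly if $h_\infty(G_n,F)=0$), and observe that $e(\overline{X},Y)=e(X,Y)$, so the closure in the definition of the Aumann integral does not disturb your excess estimates.
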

\begin{proof}
Suppose first that $F\in\bL^1(\O,\fffC)$ is bounded. Let $\ep>0$ and let $\bar{\xi}$ be an arbitrary point in the Aumann integral of $F$. Then for some $f\in\text{S}^1_F$, setting $\xi=\int_{\O^{\prime}}fdP$, we have
\begin{equation}\label{eq5.14}
\|\bar{\xi}-\xi\|<\ep.
\end{equation}
Let $\{\phi_n\}_{n=1}^{+\infty}$ and $\{G_n\}_{n=1}^{+\infty}$ be sequences of countably-valued maps converging, respectively, to $f$ and $F$, uniformly a.s. on $\O$. Without loss of generality we can assume that for each $n\in\bN$, $\phi_n$ and $G_n$ have representations given by
\begin{equation*}
\phi_n(\o) = \sum_{i=1}^{+\infty}a_i^n \chi_{\O_i^n}(\o) \qquad G_n(\o) = \sum_{i=1}^{+\infty}A_i^n \chi_{\O_i^n}(\o), \qquad \o\in\O^{\prime},
\end{equation*}
where $\{a_i^n\}_{i=1}^{+\infty}\subset\bE$, $\{A_i^n\}_{i=1}^{+\infty}\subset\fffC$, $\{\O_i^n\}_{i=1}^{+\infty}$ is a measurable partition of $\O^{\prime}$ and, moreover,
\begin{equation}\label{eq5.15}
\|\phi_n-f\|_{\infty}<\frac{1}{n} \qquad h_{\infty}(G_n,F)<\frac{1}{n}, \qquad n\in\bN.
\end{equation}
Now, for each $n\in\bN$,
\begin{equation*}
\phi_n(\o)\in f(\o)+\|\phi_n-f\|_{\infty}B\subset F(\o)+\frac{1}{n}B \subset G_n(\o)+\frac{2}{n}B \qquad \text{a.s. on}\;\O^{\prime}
\end{equation*}
and thus, denoting by $\tilde{G}_n:\O^{\prime}\to\fffC$ a map with representation given by
\begin{equation*}
\tilde{G}_n(\o) = \sum_{i=1}^{+\infty}\tilde{A}_i^n \chi_{\O_i^n}(\o),\qquad\qquad \o\in\O^{\prime}
\end{equation*}
where $\tilde{A}_i^n=\overline{A_i^n+B/n}$\,, it follows that $\phi_n(\o)\in\tilde{G}_n(\o)$ a.s. on $\O^{\prime}$. Consequently
\begin{equation}\label{eq5.16}
\int_{\O^{\prime}} \phi_n dP\in\Hint \tilde{G}_n dP.
\end{equation}
Evidently $\{\tilde{G}_n\}_{n=1}^{+\infty}\subset \bL^1(\O^{\prime},\fffC)$ is a sequence of countably-valued maps $\tilde{G}_n:\O^{\prime}\to\fffC$ converging to $F$ uniformly a.s. on $\O^{\prime}$. Then letting $n\to+\infty$, from \eqref{eq5.16} in view of Remark \ref{remark5.6} one has
\begin{equation*}
\xi=\int_{\O^{\prime}} f dP\in\Hint F dP.
\end{equation*}
Since $\tilde{\xi}$ is an arbitrary point in the Aumann integral of $F$, from the latter and \eqref{eq5.14} it follows that
\begin{equation}\label{eq5.17}
\Aint F dP\subset \Hint F dP +\ep B.
\end{equation}

Let us prove the reverse inclusion, obtained by interchanging the roles of the Aumann and Hukuhara integrals. To this end, fix $n_0\in\bN$ so that $n_0>4/\ep$. Then for every $n\geq n_0$ by virtue of Proposition \ref{proposition5.4} and \eqref{eq5.15} we have
\begin{gather}\label{eq5.18}
\Hint G_n\, dP=\Aint G_n dP\subset \Aint\; \left(\,\overline{F+\frac{1}{n}B}\,\right) dP\nonumber\\
\subset \Aint\;\left(\,\overline{ F+\frac{\ep}{4}B}\,\right)\,dP \subset \Aint F\,dP +\frac{3\ep}{4} B,
\end{gather}
where the last inclusion holds because, by Proposition \ref{proposition5.2} (with $r, \s$ and $\O$ replaced by $\ep/r, \ep/4$ and $\O^\prime$ respectively),
\begin{equation*}
\text{S}^1_{\overline{F+\frac{\ep}{4}B}} \subset  \text{S}^1_F+\frac{3\ep}{4} \text{S}^1_B.
\end{equation*}
From (\ref{eq5.18}.18) letting $n\to+\infty$ one has
\begin{equation*}
\Hint F\,dP \subset \Aint F\,dP +\ep B.
\end{equation*}
Since the Aumann and the Hukuhara integrals are closed sets and $\ep>0$ is arbitrary, from the latter and \eqref{eq5.17} it follows that \eqref{eq5.13} is valid. Thus the statement is proved, whenever $F\in\bL^1(\O,\fffC)$ is bounded.

Suppose that $F\in\bL^1(\O,\fffC)$ is not bounded. For $n\in\bN$ set $\O_n=\{\o\in\O^{\prime}\;|\; \|F(\o)\|>n\}$ and define
\[ F_n(\o)=\begin{cases}
            F(\o)\qquad &\text{if}\;\o\in\O^{\prime}\setminus\O_n,\\
	    \;\;n &\text{if}\;\o\in\O_n.
           \end{cases}\]
Given $\ep>0$ fix $n\in\bN$ so that
\begin{equation}\label{eq5.19}
\int_{\O_n}\negthickspace\negthickspace\|F\|dP<\ep
\end{equation}
By virtue of \eqref{eq5.19} the Aumann and the Hukuhara integrals of $F_n$ on $\O_n$ are contained in the ball $\ep B$, hence
\begin{gather*}
h\left( \Aint FdP,\;\text{(A)}\negthickspace\int_{\O^{\prime}\setminus\O_n}\negthickspace\negthickspace\negthickspace F_n dP\right)<\ep\\
h\left( \Hint FdP,\;\text{(H)}\negthickspace\int_{\O^{\prime}\setminus\O_n}\negthickspace\negthickspace\negthickspace F_n dP\right)<\ep.
\end{gather*}
Since $F_n$ is bounded on $\O^{\prime}\setminus\O_n$, the Aumann and the Hukuhara integrals of $F_n$ on $\O^{\prime}\setminus\O_n$ are equal. Consequently as $\ep>0$ is arbitrary, \eqref{eq5.13} is valid also when $F$ is not bounded. This completes the proof.
\end{proof}

\begin{proposition}\label{proposition5.6}
For each $F\in\bL^1(\O,\ffC)$
\begin{equation}\label{eq5.20}
\aint FdP=\hint\;(\cco F)dP.
\end{equation}
\end{proposition}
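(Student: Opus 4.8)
The plan is to reduce \eqref{eq5.20} to the countably-valued case, where Proposition \ref{proposition5.4} already identifies $\aint H\,dP$ with $\hint(\cco H)\,dP$, and then to pass to the limit using the continuity of both integrals under uniform a.s.\ approximation. No splitting into bounded and unbounded cases will be needed, in contrast to Proposition \ref{proposition5.5}.

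First I would record that $\cco F\in\bL^1(\O,\fffC)$: the closed convex hull of a strongly measurable map is again strongly measurable (this is property $(a_3)$ of the Remark on strongly measurable maps in Section \ref{section5}), and $\int_\O\|\cco F\|\,dP=\int_\O\|F\|\,dP<+\infty$ because $\|\cco F(\o)\|=\|F(\o)\|$; hence the right-hand side of \eqref{eq5.20} is meaningful. Since $F$ is strongly measurable, fix a sequence $\{H_n\}_{n=1}^{+\infty}\subset\bZ^1(\O,\ffC)$ of countably-valued maps with $\d_n:=h_\infty(H_n,F)\to 0$. Applying the inequality $h(\cco A_1,\cco A_2)\le h(A_1,A_2)$ fibrewise gives $h_\infty(\cco H_n,\cco F)\le\d_n\to0$, and each $\cco H_n$ belongs to $\bZ^1(\O,\fffC)$. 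Thus $\{\cco H_n\}$ is an admissible approximating sequence for $\cco F$ in Definition \ref{def5.2}, so by Remark \ref{remark5.6}
\begin{equation*}
\hint(\cco F)\,dP=\lim_{n\to+\infty}\hint(\cco H_n)\,dP,
\end{equation*}
and Proposition \ref{proposition5.4} (with $\O^{\prime}=\O$) turns the right-hand side into $\lim_n\aint H_n\,dP$.

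It therefore suffices to establish the continuity estimate $\aint H_n\,dP\xrightarrow{h}\aint F\,dP$, which I expect to be the crux of the argument. Fix $\s>0$. From $h_\infty(H_n,F)\le\d_n$ we get $F(\o)\subset\overline{H_n(\o)+\d_n B}$ a.s., so every $f\in\text{S}^1_F$ lies in $\text{S}^1_{\overline{H_n+\d_n B}}$; Proposition \ref{proposition5.2} (applied to $H_n$ with $r=\d_n$) then writes $f=\tilde f+(\d_n+\s)\g$ with $\tilde f\in\text{S}^1_{H_n}$ and $\g\in\text{S}^1_B$. Integrating and using $\|\int_\O\g\,dP\|\le1$ yields $\aint F\,dP\subset\aint H_n\,dP+(\d_n+\s)\overline B$. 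The symmetric inclusion, obtained from $H_n(\o)\subset\overline{F(\o)+\d_n B}$ a.s., gives $\aint H_n\,dP\subset\aint F\,dP+(\d_n+\s)\overline B$. Hence $h(\aint F\,dP,\aint H_n\,dP)\le\d_n+\s$ for every $\s>0$, so this distance is at most $\d_n\to0$.

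Combining the two limits gives $\aint F\,dP=\lim_n\aint H_n\,dP=\lim_n\hint(\cco H_n)\,dP=\hint(\cco F)\,dP$, which is \eqref{eq5.20}. The one subtlety in the continuity step is that Proposition \ref{proposition5.2} decomposes selectors only up to an arbitrarily small slack $\s$, but this is harmless because the final bound is taken over all $\s>0$. I would also stress that strong measurability directly supplies the uniform a.s.\ countably-valued approximation $\{H_n\}$, and Proposition \ref{proposition5.2} carries no boundedness hypothesis, so the whole argument runs uniformly without a separate treatment of unbounded $F$.
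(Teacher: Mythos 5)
Your argument is correct, and it is organized genuinely differently from the paper's. The paper disposes of the inclusion $\aint FdP\subset\hint(\cco F)dP$ in one line by applying Proposition \ref{proposition5.5} to the convex-valued map $\cco F$ (a result whose own proof requires a separate treatment of unbounded maps), and reserves the approximation machinery for the reverse inclusion only, via the one-sided chain $\hint(\cco F)dP\subset\hint\cco G_n\,dP+2\ep B=\aint G_n\,dP+2\ep B\subset\aint(\overline{F+\ep B})\,dP+2\ep B\subset\aint F\,dP+4\ep B$. You never invoke Proposition \ref{proposition5.5}: instead you prove the two-sided continuity estimate $h(\aint F\,dP,\aint H_n\,dP)\le\d_n+\s$ by running the selector decomposition of Proposition \ref{proposition5.2} in both directions, and then carry the identity $\aint H_n\,dP=\hint(\cco H_n)\,dP$ of Proposition \ref{proposition5.4} through the limit. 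The ingredients are the same (countably-valued approximation, Propositions \ref{proposition5.2} and \ref{proposition5.4}); what your version buys is independence from Proposition \ref{proposition5.5} and, as you observe, freedom from the bounded/unbounded dichotomy, since Proposition \ref{proposition5.2} is stated without a boundedness hypothesis; what it costs is running the selector argument twice where the paper runs it once. Two cosmetic points for a final write-up: take $\d_n$ strictly greater than $h_{\infty}(H_n,F)$ (or discard the trivial case $\d_n=0$) so that $F(\o)\subset H_n(\o)+\d_n B$ holds pointwise and Proposition \ref{proposition5.2} applies with $r=\d_n>0$; and when descending from selectors to the closed sets, bound the excess $e(\aint F\,dP,\aint H_n\,dP)$ directly by $\sup_{f}\|\int_{\O}f\,dP-\int_{\O}\tilde f\,dP\|\le\d_n+\s$, which sidesteps any concern about closures of sums with the non-compact ball $\overline{B}$.
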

\begin{proof}
By virtue of Proposition \ref{proposition5.5},
\begin{equation}\label{eq5.21}
\aint FdP\subset\aint\;(\cco F)dP=\hint\;(\cco F)dP.
\end{equation}
To prove the reverse inclusion, let $\{G_n\}_{n=1}^{+\infty}\subset\bZ^1(\O,\ffC)$ be a sequence of countably-valued maps $G_n:\O\to\ffC$ converging to $F$ uniformly a.s. on $\O$. Suppose that each $G_n$ has a representation given by
\begin{equation*}
G_n(\o) = \sum_{i=1}^{+\infty}A_i^n \chi_{\O_i^n}(\o), \qquad \o\in\O,
\end{equation*}
where $\{A_i^n\}_{i=1}^{+\infty}\subset\ffC$, $\{\O_i^n\}_{i=1}^{+\infty}$ is a measurable partition of $\O$. Given $\ep>0$ take $n_0\in\bN$ so that
\begin{equation}\label{eq5.22}
h_{\infty}(G_n,F)<\ep, \qquad h_{\infty}(\cco G_n,\cco F)<\ep \qquad\text{for all}\; n\geq n_0,
\end{equation}
the second inequality being a consequence of the first one. We have
\begin{gather}\label{eq5.23}
\hint \;(\cco F)dP \subset \hint\;\overline{(\cco G_n+\ep B)}dP \qquad\text{by \eqref{eq5.22}}\nonumber\\
\subset \hint\cco G_n dP+2\ep B \\
=\aint G_n dP +2\ep B \qquad\qquad\text{by Proposition \ref{proposition5.4}}\nonumber\\
\subset \aint\;(\overline{F+\ep B})dP +2\ep B \qquad\qquad\text{by \eqref{eq5.22}}\nonumber\\
\subset \aint F\,dP +4\ep B, \nonumber
\end{gather}
where the latter inclusion holds by Proposition \ref{proposition5.2} (with $r=\ep$ and $\s=\ep/2$). Combining \eqref{eq5.21} and (\ref{eq5.23}.23), as $\ep>0$ is arbitrary, one obtains \eqref{eq5.20}. This completes the proof.
\end{proof}

\begin{definition}\label{def5.3}
The \textit{expectation} $E(F)$ of a random set $F\in\bL^1(\O,\ffC)$ is defined by
\begin{equation*}
E(F)=\aint FdP.
\end{equation*}
\end{definition}

As an immediate consequence of the above definition and Proposition \ref{proposition5.6} we have the following:
\begin{proposition}\label{proposition5.7}
For each random set $F\in\bL^1(\O,\ffC)$
\begin{equation*}
E(F)=\hint\; (\cco F)dP.
\end{equation*}
\end{proposition}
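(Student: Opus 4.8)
The plan is to combine the definition of the expectation with the integral identity already established in this section. First I would recall that, by Definition \ref{def5.3}, the expectation of a random set $F\in\bL^1(\O,\ffC)$ is by fiat its Aumann integral, $E(F)=\aint FdP$. Next I would invoke Proposition \ref{proposition5.6}, whose hypothesis is precisely $F\in\bL^1(\O,\ffC)$ and which asserts $\aint FdP=\hint\;(\cco F)dP$. Chaining these two equalities gives $E(F)=\hint\;(\cco F)dP$, which is the claim.

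There is essentially no obstacle: the statement is a one-line corollary, and the only point worth checking is that the domains of validity coincide, which they do, since both Definition \ref{def5.3} and Proposition \ref{proposition5.6} are stated for the same class $\bL^1(\O,\ffC)$. All the genuine work --- comparing the Aumann and Hukuhara integrals and reducing to the closed convex hull $\cco F$ --- has already been carried out in the proof of Proposition \ref{proposition5.6}, which itself rests on Proposition \ref{proposition5.5} (the equality of the two integrals on $\fC_c(\bE)$-valued maps) and on the measurable-selection apparatus, in particular Theorem \ref{theo5.1} guaranteeing $\text{S}^1_F\neq\emptyset$ so that the Aumann integral is well defined. Accordingly I would simply cite Definition \ref{def5.3} and Proposition \ref{proposition5.6} and conclude in a single sentence.
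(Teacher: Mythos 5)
Your proposal is correct and coincides with the paper's own treatment: the paper presents Proposition \ref{proposition5.7} as an immediate consequence of Definition \ref{def5.3} and Proposition \ref{proposition5.6}, exactly the two facts you chain together. Nothing further is needed.
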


\section{Strong laws of large numbers}\label{section6}

In this section we present two versions of the strong law of large numbers for random sets with values in $\fffC$ or in $\ffC$, where $\bE$ is an arbitrary not necessarily separable Banach space. Since in our approach we use a R\text{\r{a}}dstr$\Ddot{\text o}$m embedding technique we start by reviewing some of its properties (see \cite{Radstrom1952, Aumann1965}).

In what follows the space $\fffC$ is equipped with the operations of \textit{addition} $X\mi Y$ and \textit{multiplication} $\l X$ by non negative scalars defined, for every $X,Y\in\fffC$ and $\l\geq 0$ as follows
\begin{equation*}
X\mi Y = \overline{\{x+y \;|\; x\in X, y\in Y\}}, \qquad \l X= \{\l x \;|\; x\in X\}.
\end{equation*}
Clearly $X\mi Y$ and $\l X$ are in $\fffC$.

The following Propositions \ref{proposition6.1}-\ref{proposition6.3} can be proved as in \cite{Hiai1984, Radstrom1952}.

\begin{proposition}\label{proposition6.1}
For arbitrary $X,Y,Z\in\fffC$ and $\l,\m\geq 0$ we have: $(a_1)$ $X\mi\{0\}=X$; $(a_2)$ $X\mi Y =Y\mi X$; $(a_3)$ $X\mi(Y\mi Z)=(X\mi Y)\mi Z$; $(a_4)$ $1\cdot X=X$; $(a_5)$ $\l(\m X)=(\l\m)X$; $(a_6)$ $\l (X\mi Y)=\l X\mi\l Y$; $(a_7)$ $(\l+\m)X=\l X\mi \m Y$.\\ Moreover the operations of addition and multiplication by non negative scalars are continuous in the topology generated by the Pompeiu-Hausdorff metric $h$ of $\fffC$.
\end{proposition}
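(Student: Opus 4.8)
The plan is to verify the seven identities $(a_1)$--$(a_7)$ at the level of sets in $\bE$, keeping careful track of the closures that enter the definition of $\mi$, and then to deduce continuity of both operations from a single subadditivity estimate for the Pompeiu-Hausdorff distance. The governing elementary fact, which I would isolate first, is the closure lemma: for a bounded set $A\subset\bE$ and any set $B\subset\bE$ one has $\overline{A+\overline{B}}=\overline{A+B}$. One inclusion is immediate from $A+B\subset A+\overline{B}$; the reverse holds because any $a+b'$ with $b'=\lim b_n$, $b_n\in B$, is the limit of $a+b_n\in A+B$.

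With this in hand the easy identities fall out. Here $(a_1)$ holds because $X$ is closed, $(a_2)$ from commutativity of vector addition (then take closures), and $(a_4)$, $(a_5)$ because scalar multiplication composes with itself and involves no closure, with the convention $0\cdot X=\{0\}$. For associativity $(a_3)$ I would write $X\mi(Y\mi Z)=\overline{X+\overline{Y+Z}}=\overline{X+(Y+Z)}=\overline{(X+Y)+Z}=\overline{\overline{X+Y}+Z}=(X\mi Y)\mi Z$, using the closure lemma to strip and reinsert the inner closures and ordinary associativity of the Minkowski sum in the middle. For $(a_6)$, when $\l>0$ the map $x\mapsto\l x$ is a homeomorphism, so $\l\overline{X+Y}=\overline{\l X+\l Y}$; the case $\l=0$ gives $\{0\}$ on both sides.

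The one identity that genuinely uses convexity, and which I regard as the crux, is $(a_7)$ (whose right-hand side should read $\l X\mi\m X$). The inclusion $(\l+\m)X\subset\l X+\m X$ is trivial, taking the same point twice. For the reverse, assuming $\l+\m>0$, I would write any $\l x+\m x'$ with $x,x'\in X$ as $(\l+\m)\bigl(\tfrac{\l}{\l+\m}x+\tfrac{\m}{\l+\m}x'\bigr)$; the inner point is a convex combination of points of $X$, hence lies in $X$, so $\l x+\m x'\in(\l+\m)X$. Thus $\l X+\m X\subset(\l+\m)X$, and since the latter is closed, $\overline{\l X+\m X}\subset(\l+\m)X$, giving equality; if $\l=\m=0$ both sides are $\{0\}$. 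This is precisely the step that would fail for nonconvex members of $\ffC$.

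Finally, for continuity I would reduce everything to two facts: that $h$ is unchanged by passing to closures, $h(\overline{S},\overline{T})=h(S,T)$ for bounded $S,T$ (elementary, since $d(x,T)=d(x,\overline{T})$ and $x\mapsto d(x,T)$ is continuous, so the excess sup over $S$ equals that over $\overline{S}$), and that the excess is subadditive under Minkowski sums, $e(A+B,A'+B')\le e(A,A')+e(B,B')$, proved by approximating a generic $a+b$ by $a'+b'$ coordinatewise. Combining these gives $h(X\mi Y,X'\mi Y')=h(X+Y,X'+Y')\le h(X,X')+h(Y,Y')$, whence joint continuity of $\mi$. For scalar multiplication I would use $h(\l X,\l Y)=\l\,h(X,Y)$ and $h(\l X,\m X)\le|\l-\m|\,\norm{X}$, so that $h(\l_n X_n,\l X)\le\l_n\,h(X_n,X)+|\l_n-\l|\,\norm{X}\to0$ whenever $\l_n\to\l$ and $X_n\xrightarrow{h}X$, using boundedness of $\{\l_n\}$ and finiteness of $\norm{X}$. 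The main obstacle is conceptual rather than computational: ensuring the closures in $\mi$ are handled uniformly, via the closure lemma and the closure-invariance of $h$, so that every manipulation legitimately reduces to the convex-set algebra in $\bE$, with convexity invoked exactly once, in $(a_7)$.
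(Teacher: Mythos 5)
Your proof is correct. The paper itself gives no argument for this proposition: it simply states that Propositions \ref{proposition6.1}--\ref{proposition6.3} ``can be proved as in'' R\text{\r{a}}dstr\"om and Hiai, so there is nothing in the text to compare against line by line; what you have written is essentially the standard verification found in those references, carried out in full. The structure is sound: the closure lemma $\overline{A+\overline{B}}=\overline{A+B}$ (which in fact needs no boundedness hypothesis) correctly reduces $(a_1)$--$(a_6)$ to the algebra of Minkowski sums; you rightly identify $(a_7)$ as the only place convexity enters, and you correctly flag the typo in the statement (the right-hand side should be $\l X\mi\m X$, not $\l X\mi\m Y$). Your closedness claim for $(\l+\m)X$ is justified because multiplication by a positive scalar is a homeomorphism, with the degenerate case $\l=\m=0$ handled separately. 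For continuity, the two Lipschitz estimates $h(X\mi Y,X'\mi Y')\le h(X,X')+h(Y,Y')$ and $h(\l_n X_n,\l X)\le \l_n h(X_n,X)+\abs{\l_n-\l}\,\norm{X}$ give joint continuity of both operations and are exactly what is needed later (compare Proposition \ref{proposition6.3}$(ii)$, which records the special cases $h(X\mi Z,Y\mi Z)=h(X,Y)$ and $h(\l X,\l Y)=\l h(X,Y)$). No gaps.
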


\begin{proposition}\label{proposition6.2}
Let $A, C, Z$ be non empty subsets of $\bE$ and suppose that $C$ is convex and $Z$ is bounded. Then
\begin{equation*}
\overline{A+ Z}\subset \overline{C+ Z} \quad\Rightarrow\quad A\subset \overline{C}. 
\end{equation*}
\end{proposition}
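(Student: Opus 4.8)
Proposition 6.2 asserts that for nonempty sets $A, C, Z \subset \bE$ with $C$ convex and $Z$ bounded, the inclusion $\overline{A+Z} \subset \overline{C+Z}$ implies $A \subset \overline{C}$.

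The statement is a R\aa dstr\"om-type cancellation law, and the plan is to prove it by an averaging (Ces\`aro) argument combined with an iterative selection from the closure.

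First I would reduce the hypothesis to the cleaner inclusion $A+Z\subset\overline{C+Z}$, which is immediate since $A+Z\subset\overline{A+Z}\subset\overline{C+Z}$. Set $M=\sup_{z\in Z}\norm{z}<+\infty$, finite because $Z$ is bounded. Now fix $a\in A$; the goal is to show $a\in\overline{C}$, i.e.\ that for every $\eps>0$ there is some $c\in C$ with $\norm{a-c}<\eps$. Choose any $z_0\in Z$ and a tolerance $\delta>0$. Since $a+z_0\in A+Z\subset\overline{C+Z}$, there exist $c_1\in C$ and $z_1\in Z$ with $\norm{a+z_0-c_1-z_1}<\delta$. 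As $a+z_1\in A+Z\subset\overline{C+Z}$ as well, one finds $c_2\in C$, $z_2\in Z$ with $\norm{a+z_1-c_2-z_2}<\delta$, and so on. Inductively this produces sequences $\{c_k\}\subset C$ and $\{z_k\}\subset Z$ with $\norm{a+z_{k-1}-c_k-z_k}<\delta$ for every $k\in\bN$.

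The crucial step is to sum these $n$ inequalities and exploit the telescoping of the $z_k$'s: the intermediate terms cancel, leaving $\norm{na+z_0-z_n-\sum_{k=1}^n c_k}<n\delta$. Dividing by $n$ and setting $c=\frac{1}{n}\sum_{k=1}^n c_k$ — which lies in $C$ precisely because $C$ is convex — gives $\norm{a-c}\le\delta+\frac{\norm{z_0-z_n}}{n}\le\delta+\frac{2M}{n}$. For $n$ large enough the second summand drops below $\delta$, so $\norm{a-c}<2\delta$. Since $\delta>0$ was arbitrary, $a\in\overline{C}$, and since $a\in A$ was arbitrary, $A\subset\overline{C}$.

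I expect the main obstacle to be the bookkeeping in the iterative construction and the telescoping sum: one must check that $a+z_{k-1}$ genuinely remains in $A+Z$ at each stage, so that the closure inclusion can be reapplied, and that the accumulated approximation error $n\delta$ together with the endpoint displacement $z_0-z_n$ (controlled by the boundedness of $Z$) are both damped by the factor $1/n$ after averaging. Convexity of $C$ enters exactly once but decisively: it is what keeps the Ces\`aro average $c$ inside $C$ itself, rather than merely in its convex hull, and this is why the hypothesis cannot be weakened.
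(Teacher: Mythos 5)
Your argument is correct and is precisely the classical R\aa dstr\"om cancellation argument: the paper itself gives no proof of Proposition \ref{proposition6.2}, deferring to \cite{Radstrom1952, Hiai1984}, and the iterative selection with telescoping $z_k$'s followed by the Ces\`aro average over the convex set $C$ is exactly the proof found there. The bookkeeping you flag as a potential obstacle is in fact unproblematic, since $a+z_{k-1}\in A+Z$ at every stage and the endpoint term $\|z_0-z_n\|/n\leq 2M/n$ is killed by the averaging.
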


\begin{proposition}\label{proposition6.3}
\begin{itemize}
\item[$(i)$] For $X,Y,Z \in\fffC$
\begin{equation*}
X\mi Z= Y\mi Z \quad\Longleftrightarrow\quad X=Y.
\end{equation*}
\item[$(ii)$] For $X,Y,Z\in \fffC$ and $\l\geq 0$
\begin{equation*}
h(X\mi Z, Y\mi Z)= h(X,Y) \qquad h(\l X, \l Y)=\l h(X,Y).
\end{equation*}
\end{itemize}
\end{proposition}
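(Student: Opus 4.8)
The plan is to treat the two parts separately. Part (i) will follow almost immediately from the cancellation result already recorded in Proposition~\ref{proposition6.2}, so the real work is concentrated in the isometry of part (ii), and within it in a single reverse inequality.

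For part (i), one implication is trivial: $X=Y$ gives $X\mi Z=Y\mi Z$. For the converse, suppose $X\mi Z=Y\mi Z$, i.e.\ $\overline{X+Z}=\overline{Y+Z}$. In particular $\overline{X+Z}\subset\overline{Y+Z}$, and since $Y$ is convex and $Z$ is bounded, Proposition~\ref{proposition6.2} (taking $A=X$, $C=Y$) yields $X\subset\overline{Y}=Y$. Interchanging the roles of $X$ and $Y$ gives $Y\subset X$, hence $X=Y$.

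For the first identity in part (ii) I would first establish the easy inequality $h(X\mi Z,Y\mi Z)\le h(X,Y)$. Since $h(X\mi Z,Y\mi Z)=h(X+Z,Y+Z)$ directly from the definition of $h$ via inclusions, it suffices to bound the latter: for any $x\in X$, $z\in Z$ and any $y\in Y$ one has $d(x+z,Y+Z)\le\|(x+z)-(y+z)\|=\|x-y\|$, whence $e(X+Z,Y+Z)\le e(X,Y)$, and symmetrically for $e(Y+Z,X+Z)$. The reverse inequality is where the content lies. Writing $r=h(X\mi Z,Y\mi Z)$ and fixing $\eps>0$, one has $X\mi Z\subset(Y\mi Z)\mi(r+\eps)\overline{B}$, and by the commutativity and associativity of $\mi$ (Proposition~\ref{proposition6.1}, $(a_2)$--$(a_3)$) the right side equals $(Y\mi(r+\eps)\overline{B})\mi Z$. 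Putting $Y'=Y\mi(r+\eps)\overline{B}$, which is convex and bounded, this reads $\overline{X+Z}\subset\overline{Y'+Z}$, so Proposition~\ref{proposition6.2} gives $X\subset Y'=\overline{Y+(r+\eps)\overline{B}}$. A short estimate on the approximating sums then shows $d(x,Y)\le r+\eps$ for each $x\in X$, hence $e(X,Y)\le r+\eps$; the symmetric argument bounds $e(Y,X)$, and letting $\eps\to 0$ gives $h(X,Y)\le r$. The two inequalities together prove $h(X\mi Z,Y\mi Z)=h(X,Y)$. The second identity is a direct scaling computation: for $\l=0$ both sides vanish, while for $\l>0$ one has $d(\l x,\l Y)=\l\,d(x,Y)$ for each $x\in X$, so $e(\l X,\l Y)=\l\,e(X,Y)$ and symmetrically $e(\l Y,\l X)=\l\,e(Y,X)$; taking the maximum yields $h(\l X,\l Y)=\l\,h(X,Y)$.

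I expect the only genuine obstacle to be the reverse inequality $h(X,Y)\le h(X\mi Z,Y\mi Z)$. It is precisely here that the boundedness of $Z$ and the convexity of $Y$ are indispensable, and the argument must be routed through Proposition~\ref{proposition6.2} after rewriting the translated inclusion by means of the algebraic laws of $\mi$. The bookkeeping with closures, together with the passage $\eps\to 0$, requires some care, whereas every other step is routine.
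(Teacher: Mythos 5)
Your proof is correct. The paper itself gives no argument for Proposition~\ref{proposition6.3}, merely noting that Propositions~\ref{proposition6.1}--\ref{proposition6.3} ``can be proved as in'' Hiai and R{\aa}dstr\"om; your route --- deducing both the cancellation in $(i)$ and the reverse inequality $h(X,Y)\le h(X\mi Z,Y\mi Z)$ in $(ii)$ from Proposition~\ref{proposition6.2} after rewriting $(Y\mi Z)\mi(r+\eps)\overline{B}$ as $(Y\mi(r+\eps)\overline{B})\mi Z$ --- is precisely the classical R{\aa}dstr\"om cancellation-law argument those references use, so it matches the intended proof.
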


By virtue of Propositions \ref{proposition6.1}-\ref{proposition6.3}, adapting some arguments from \cite{Aumann1965, Hiai1984, Puri1985, Radstrom1952} one can establish the following

\begin{proposition}[R\text{\r{a}}dstr$\Ddot{\text o}$m embedding]\label{proposition6.4}
There exists a Banach space $(\bF,\|\cdot\|)$ and a map $J:\fffC\to\bV$, where $\bV=J(\fffC)\subset\bF$ such that:
\begin{itemize}
 \item[$(i)$] $J(\l X\mi\m Y)=\l J(X)+\m J(Y)$ for every $X,Y\in\fffC$ and $\l,\m\geq 0$;
\item[$(ii)$] $\|J(X)-J(Y)\|=h(X,Y)$ for every $X,Y\in\fffC$;
\item[$(iii)$] $\bV$ is a convex cone in $\bF$, complete under the metric induced by the norm of $\bF$. 
\end{itemize}
\end{proposition}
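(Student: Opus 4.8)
The plan is to realize $\fffC$, equipped with the operations $\mi$ and multiplication by nonnegative scalars, as the positive cone of a normed vector space obtained by a Grothendieck-type construction of formal differences, and then to pass to the completion. Propositions \ref{proposition6.1} and \ref{proposition6.3} supply exactly the algebraic and metric input needed: by Proposition \ref{proposition6.1} the triple $(\fffC,\mi,\{0\})$ is a commutative monoid on which nonnegative scalars act associatively and distributively; by Proposition \ref{proposition6.3}$(i)$ the operation $\mi$ is cancellative; and by Proposition \ref{proposition6.3}$(ii)$ the metric $h$ is invariant under $\mi$ and positively homogeneous.

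First I would introduce on $\fffC\times\fffC$ the relation $(X_1,Y_1)\sim(X_2,Y_2)$ iff $X_1\mi Y_2=X_2\mi Y_1$, thinking of the class of $(X,Y)$ as the formal difference ``$X-Y$''. Reflexivity and symmetry are immediate; transitivity is precisely where the cancellation law of Proposition \ref{proposition6.3}$(i)$ is essential, since one adds the two defining identities and cancels the common summand $X_2\mi Y_2$. On the quotient $\bG=(\fffC\times\fffC)/\!\sim$ I would define addition by $[(X_1,Y_1)]+[(X_2,Y_2)]=[(X_1\mi X_2,\,Y_1\mi Y_2)]$, and scalar multiplication by $\l[(X,Y)]=[(\l X,\l Y)]$ for $\l\geq 0$ and $\l[(X,Y)]=[((-\l)Y,(-\l)X)]$ for $\l<0$; the zero is $[(\{0\},\{0\})]$ and the additive inverse of $[(X,Y)]$ is $[(Y,X)]$. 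Checking that these operations descend to classes and satisfy the vector-space axioms is routine, using commutativity, associativity and the distributive law $\l(X\mi Y)=\l X\mi\l Y$ from Proposition \ref{proposition6.1}.

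Next I would equip $\bG$ with $\|[(X,Y)]\|=h(X,Y)$. This is well defined because, if $X_1\mi Y_2=X_2\mi Y_1$, then Proposition \ref{proposition6.3}$(ii)$ gives $h(X_1,Y_1)=h(X_1\mi Y_2,\,Y_1\mi Y_2)=h(X_2\mi Y_1,\,Y_2\mi Y_1)=h(X_2,Y_2)$. Positive definiteness follows since $h(X,Y)=0$ forces $X=Y$ in $\fffC$, whence $[(X,Y)]$ is the zero class; absolute homogeneity follows from Proposition \ref{proposition6.3}$(ii)$ together with the symmetry of $h$, the case $\l<0$ being handled through the inverse class; and the triangle inequality reduces, via invariance of $h$ under $\mi$, to $h(X_1\mi X_2,\,Y_1\mi Y_2)\leq h(X_1,Y_1)+h(X_2,Y_2)$, which is obtained by inserting the intermediate set $Y_1\mi X_2$ and applying Proposition \ref{proposition6.3}$(ii)$ twice.

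Finally I would take $\bF$ to be the completion of the normed space $\bG$, and define $J:\fffC\to\bF$ by $J(X)=[(X,\{0\})]$, with $\bV=J(\fffC)$. Property $(i)$ is immediate from the definitions, since $\l J(X)+\m J(Y)=[(\l X\mi\m Y,\{0\})]=J(\l X\mi\m Y)$; property $(ii)$ follows because $J(X)-J(Y)=[(X,\{0\})]+[(\{0\},Y)]=[(X,Y)]$, so that $\|J(X)-J(Y)\|=h(X,Y)$; and in $(iii)$ the set $\bV$ is a convex cone because $J(X)+J(Y)=J(X\mi Y)\in\bV$ and $\l J(X)=J(\l X)\in\bV$ for $\l\geq 0$. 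The one genuinely global point is the completeness of $\bV$: rather than relying on completeness of $\bG$ (which need not hold), I would invoke that $\fffC$ is complete under $h$ and that $J$ is an isometry by $(ii)$, so that $\bV$ is a complete, hence closed, subset of $\bF$. I expect the verification of the triangle inequality and the careful bookkeeping of classes under negative scalars to be the most delicate parts, while the remaining axioms are mechanical consequences of Propositions \ref{proposition6.1}--\ref{proposition6.3}.
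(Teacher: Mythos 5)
Your proposal is correct and is essentially the classical R\r{a}dstr\"om construction (Grothendieck quotient of the cancellative monoid $(\fffC,\mi)$, norm induced by $h$, completion, with completeness of $\bV=J(\fffC)$ deduced from completeness of $(\fffC,h)$ via the isometry), which is precisely the argument the paper invokes by citing Propositions \ref{proposition6.1}--\ref{proposition6.3} and the references rather than writing it out. The delicate points you flag (transitivity via cancellation, well-definedness of the norm, mixed-sign distributivity resting on $(\l+\m)X=\l X\mi\m X$ for convex $X$) are exactly the ones the cited propositions are designed to supply, so nothing is missing.
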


It is worth noting that, since $\fffC$ is a complete metric space under the Pompeiu-Hausdorff metric $h$, then any nonempty closed subset of $\fffC$ is a complete metric space under the induced metric.

A nonempty set $\cW\subset\fffC$ is called a \textit{semilinear complete metric space} if: $(a_1)$ $\cW$ is closed in $\fffC$; $(a_2)$ $\cW$ is stable under the operations of addition and multiplication by nonnegative scalars, $i.e.$, for every $X,Y\in\cW$ and $\l\geq 0$ we have $X\mi Y\in\cW$ and $\l X\in\cW$.

It is evident that the space $\fffC$ itself is a semilinear complete metric space.

Retaining the above notations we now prove a useful variant of Proposition \ref{proposition6.4}.

\begin{proposition}\label{proposition6.5}
Let $\cW\subset\fffC$ be a semilinear complete and separable metric space. Let $\tilde{J}$ be the restriction of $J$ to $\cW$. Then there exists a separable Banach space $(\tilde{\bF},\|\cdot\|)$ such that the map $\tilde{J}:\cW\to\bW$, where $\bW=\tilde{J}(\cW)\subset \tilde{\bF}$ has the following properties:\begin{itemize}
\item[$(i)$] $\tilde{J}(\l X\mi\m Y)=\l \tilde{J}(X)+\m \tilde{J}(Y)$ for every $X,Y\in\cW$ and $\l,\m\geq 0$;
\item[$(ii)$] $\|\tilde{J}(X)-\tilde{J}(Y)\|=h(X,Y)$ for every $X,Y\in\cW$;
\item[$(iii)$] $\bW$ is a convex cone in $\tilde{\bF}$, complete under the metric induced by the norm of $\tilde{\bF}$. 
\end{itemize}
\end{proposition}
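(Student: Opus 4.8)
The plan is to keep $\tilde{J}$ as the mere restriction $J\rest\cW$ and to manufacture a \emph{separable} ambient space by shrinking $\bF$ to the closed linear span of the image of $\cW$. Concretely, put $\bW=\tilde{J}(\cW)=J(\cW)$ and define $\tilde{\bF}=\overline{\Span\,\bW}\subset\bF$, the closure in $\bF$ of the linear span of $\bW$. Being a closed subspace of the Banach space $\bF$ furnished by Proposition \ref{proposition6.4}, $\tilde{\bF}$ is itself a Banach space under the induced norm, and $\bW\subset\tilde{\bF}$ by construction.

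Properties $(i)$ and $(ii)$ then require no work. Since $\cW$ is semilinear, $\l X\mi\m Y\in\cW$ whenever $X,Y\in\cW$ and $\l,\m\geq0$, so the identity $\tilde{J}(\l X\mi\m Y)=\l\tilde{J}(X)+\m\tilde{J}(Y)$ is just Proposition \ref{proposition6.4}$(i)$ read on points of $\cW$; likewise the isometry $(ii)$ is inherited verbatim from $J$. For $(iii)$ I would first note that $\bW$ is a convex cone: if $u=\tilde{J}(X)$ and $v=\tilde{J}(Y)$ with $X,Y\in\cW$, then $\l u+\m v=\tilde{J}(\l X\mi\m Y)\in\bW$ for every $\l,\m\geq0$, using $(i)$ and the stability of $\cW$. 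Completeness of $\bW$ in the induced metric follows from the completeness of $\cW$ as a metric space together with the fact that, by $(ii)$, $\tilde{J}$ is an isometry of $\cW$ onto $\bW$: a Cauchy sequence in $\bW$ is the $\tilde{J}$-image of a Cauchy sequence in $\cW$, which converges, and its limit is carried by $\tilde{J}$ to the desired limit in $\bW$.

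The only genuinely new point, and the one I expect to carry the argument, is the separability of $\tilde{\bF}$. Here I would invoke the separability of $\cW$: fix a countable dense set $\{W_k\}_{k=1}^{+\infty}\subset\cW$, so that by the isometry $(ii)$ the images $\{\tilde{J}(W_k)\}_{k=1}^{+\infty}$ are dense in $\bW$. The collection of all finite linear combinations of the $\tilde{J}(W_k)$ with rational coefficients is then countable and dense in $\Span\,\bW$, hence in its closure $\tilde{\bF}$; thus $\tilde{\bF}$ is separable, which is precisely the feature distinguishing this statement from Proposition \ref{proposition6.4}. With $(i)$--$(iii)$ and separability in hand the proof is complete. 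No delicate estimate is needed: the whole content is the observation that passing to the closed span of the separable cone $\bW$ restores separability without disturbing the algebraic and metric structure already provided by the embedding.
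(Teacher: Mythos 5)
Your proposal is correct and follows essentially the same route as the paper: both define $\tilde{\bF}=\overline{\Span\,\bW}$ inside the R{\aa}dstr\"om space $\bF$, inherit $(i)$ and $(ii)$ directly from Proposition \ref{proposition6.4}, deduce that $\bW$ is a closed convex cone via the isometry and the completeness of $\cW$, and obtain separability of $\tilde{\bF}$ from a countable dense subset of $\cW$. No substantive difference.
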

\begin{proof}
Set $\tilde{\bF}=\overline{\text{span}\,\bW}$, where $\text{span}\,\bW$ denotes the linear span of $\bW$, and observe that $\tilde{\bF}\subset\bF$, since $\bW\subset \bF$ by Proposition \ref{proposition6.4}. Moreover $\tilde{\bF}$ is separable, because the separability of $\cW$ implies that $\bW$ as well as its linear span are separable. Therefore $\tilde{\bF}$, as a closed and separable linear subspace of $\bF$, is actually a separable Banach space uneder the induced norm of $\bF$. It remains to verify $(i)-(iii)$.\\
$(i)$ Let $X,Y\in\cW$ and $\l,\m\geq 0$. Since $\l X\mi\m Y\in\cW$, then by Proposition \ref{proposition6.4} one has
\begin{equation*}
\tilde{J}(\l X\mi\m Y)=J(\l X\mi\l Y)=\l J(X)+\m J(Y)=\l\tilde{J}(X)+\m\tilde{J}(Y).
\end{equation*}
$(ii)$ This is obvious, since for $X,Y\in\cW$,
\begin{equation*}
\|\tilde{J}(X)-\tilde{J}(Y)\|=\|J(X)-J(Y)\|=h(X,Y).
\end{equation*}
$(iii)$ The set $\bW\subset \tilde{\bF}$ is a convex cone. In fact let $s,t\geq 0$ and $\xi,\eta\in\cW$ be arbitrary. Then $\xi=J(X)$, $\eta=J(Y)$ for some $X,Y\in\cW$ and thus
\begin{equation*}
s\xi+t\eta=sJ(X)+tJ(Y)=J(sX\mi tY)=\tilde{J}(sX\mi tY),
\end{equation*}
since $sX\mi tY\in\cW$. Hence $s\xi+t\eta\in\cW$, and so $\bW$ is a convex cone. Moreover $\bW$ is closed in $\tilde{\bF}$. To see this, consider an arbitrary sequence $\{\xi_n\}_{n=1}^{+\infty}\subset \bW$ converging to some $\xi\in\tilde{\bF}$. For each $n\in\bN$ there exists $X_n\in\cW$ such that $\tilde{J}(X_n)=\xi_n$. By virtue of $(ii)$ the sequence $\{X_n\}_{n=1}^{+\infty}\subset\cW$ is Cauchy and thus, as $\cW$ is complete, it converges to some $X\in\cW$. Set $\tilde{\xi}=\tilde{J}(X)$. Since by $(ii)$
\begin{equation*}
\|\xi_n-\tilde{\xi}\|=\|\tilde{J}(X_n)-\tilde{J}(X)\|=h(X_n,X),
\end{equation*}
it follows that $\{\xi_n\}_{n=1}^{+\infty}$ converges to $\tilde{\xi}$ as $n\to+\infty$. By the uniqueness of the limit one has  $\xi=\tilde{\xi}$, $i.e.$, $\xi=\tilde{J}(X)$. Hence $\xi\in\bW$ and thus $\bW$ is closed in $\tilde{\bF}$. This completes the proof.
\end{proof}

\begin{proposition}\label{proposition6.6}
Let $\cW$ be a semilinear complete and separable metric space and, retaining the notation of Proposition \ref{proposition6.5}, let $\bW=\tilde{J}(\cW)$. Let $F\in\bL^1(\O,\cW)$. Then the map $\xi:\O\to\bW$ given by $\xi(\o)=\tilde{J}(F(\o))$, $\o\in\O$, is Bochner integrable on $\O$, $i.e.$, $\xi\in L^1(\O,\tilde{\bF})$. Moreover,
\begin{equation}\label{eq6.1}
\tilde{J}\left(\hint FdP\right) =\int_{\O}\negthickspace\negthickspace\xi dP,
\end{equation}
where the integral on the right-hand side is an element of $\bW$.
\end{proposition}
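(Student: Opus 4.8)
The plan is to prove both assertions by first reducing to countably-valued maps, where the Hukuhara integral is an explicit absolutely convergent series, and then passing to the limit using that $\tilde{J}$ is an isometry.

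First I would settle the Bochner integrability of $\xi$. Since $F\in\bL^1(\O,\cW)$ is strongly measurable there is a sequence $\{G_n\}$ of countably-valued maps converging to $F$ uniformly a.s.; composing with $\tilde{J}$ and using property $(ii)$ of Proposition \ref{proposition6.5}, we get $\|\tilde{J}(G_n(\o))-\xi(\o)\|=h(G_n(\o),F(\o))\to 0$ uniformly a.s., so $\xi$ is the uniform a.s. limit of the countably-valued maps $\tilde{J}\circ G_n:\O\to\bW\subset\tilde{\bF}$ and is therefore measurable; as $\tilde{\bF}$ is separable this is genuine strong measurability. For integrability, fix any $X_0\in\cW$; by $(ii)$ of Proposition \ref{proposition6.5} together with the triangle inequality for $h$ in $\fffC$,
\[
\|\xi(\o)\|=\|\tilde{J}(F(\o))\|\le h(F(\o),X_0)+\|\tilde{J}(X_0)\|\le \|F(\o)\|+\|X_0\|+\|\tilde{J}(X_0)\|\quad\text{a.s.}
\]
Since $\int_{\O}\|F\|\,dP<+\infty$, the right-hand side is integrable, so $\int_{\O}\|\xi\|\,dP<+\infty$ and hence $\xi\in L^1(\O,\tilde{\bF})$.

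Next I would prove \eqref{eq6.1} for a countably-valued $G\in\bZ^1(\O,\cW)$ with representation $G(\o)=\sum_{i=1}^{+\infty}A_i\chi_{\O_i}(\o)$, $A_i\in\cW$. Here $\hint G\,dP=\sum_{i=1}^{+\infty}A_iP(\O_i)$, an absolutely convergent series whose partial sums $\overline{\sum_{i=1}^nA_iP(\O_i)}=P(\O_1)A_1\mi\dots\mi P(\O_n)A_n$ lie in $\cW$ by semilinearity, so the sum lies in $\cW$ by closedness and $\tilde{J}$ applies. By property $(i)$ of Proposition \ref{proposition6.5} and induction, $\tilde{J}\big(\overline{\sum_{i=1}^nA_iP(\O_i)}\big)=\sum_{i=1}^nP(\O_i)\tilde{J}(A_i)$; letting $n\to+\infty$ and using that $\tilde{J}$ is continuous (being an isometry) gives $\tilde{J}(\hint G\,dP)=\sum_{i=1}^{+\infty}P(\O_i)\tilde{J}(A_i)$. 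The right-hand side is exactly $\int_{\O}(\tilde{J}\circ G)\,dP$, because $\tilde{J}\circ G=\sum_i\tilde{J}(A_i)\chi_{\O_i}$ is a countably-valued integrable function with $\sum_i\|\tilde{J}(A_i)\|P(\O_i)<+\infty$. This establishes \eqref{eq6.1} for $G$.

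Finally I would pass to the general case. Using the separability of $\cW$ I would produce $\{G_n\}\subset\bZ^1(\O,\cW)$ with $h_{\infty}(G_n,F)\to 0$, by partitioning $\cW$ into Borel pieces of small diameter about a countable dense set and letting $G_n$ take the corresponding centres; strong measurability of $F$ makes the preimages measurable, and $\|G_n(\o)\|\le\|F(\o)\|+h_\infty(G_n,F)$ keeps $G_n$ integrable. For each $n$ the previous step yields $\tilde{J}(\hint G_n\,dP)=\int_{\O}(\tilde{J}\circ G_n)\,dP$. On the left, $\hint G_n\,dP\to\hint F\,dP$ in $h$ by Definition \ref{def5.2}, all these integrals lie in $\cW$, and continuity of $\tilde{J}$ gives $\tilde{J}(\hint G_n\,dP)\to\tilde{J}(\hint F\,dP)$; on the right, $\|\tilde{J}(G_n(\o))-\xi(\o)\|=h(G_n(\o),F(\o))$ so $\big\|\int_{\O}(\tilde{J}\circ G_n-\xi)\,dP\big\|\le\int_{\O}h(G_n,F)\,dP\le h_{\infty}(G_n,F)P(\O)\to 0$. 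Equating the two limits gives \eqref{eq6.1}, and since $\hint F\,dP\in\cW$ the right-hand side $\int_{\O}\xi\,dP=\tilde{J}(\hint F\,dP)$ belongs to $\bW=\tilde{J}(\cW)$. The main obstacle is the bookkeeping that keeps the approximants and every Hukuhara integral inside $\cW$, so that $\tilde{J}$ may legitimately be applied — this is precisely where semilinearity and closedness of $\cW$ enter — together with the interchange of $\tilde{J}$ with the infinite series, which rests entirely on the isometry property $(ii)$.
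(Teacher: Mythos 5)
Your proposal is correct and follows essentially the same route as the paper: approximate $F$ uniformly a.s.\ by countably-valued maps $G_n$ with values in $\cW$, use the isometry to get measurability and the $L^1$ bound for $\xi$, verify \eqref{eq6.1} on each $G_n$ via properties $(i)$--$(ii)$ of Proposition \ref{proposition6.5}, and pass to the limit using the definition of the Hukuhara integral and the closedness of the cone $\bW$. The only difference is that you spell out details the paper leaves implicit (the series computation for countably-valued maps and the construction of $\cW$-valued approximants), which is a harmless refinement rather than a different argument.
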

\begin{proof}
As $F\in\bL^1(\O,\cW)$, there is a sequence $\{G_n\}_{n=1}^{+\infty}\subset \bZ^1(\O,\cW)$ of countably-valued maps $G_n :\O\to\cW$ converging to $F$ uniformly a.s. in $\O$. Consider the corresponding sequence $\{\xi_n\}_{n=1}^{+\infty}$, where $\xi_n :\O\to\tilde{\bF}$ is given by $\xi_n(\o)=\tilde{J}(G_n(\o))$, $\o\in\O$. Clearly $\{\xi_n\}_{n=1}^{+\infty}\subset \bL^1(\O,\tilde{\bF})$, since each $\xi_n$ is countably-valued and, for some $k>0$, $\|\xi_n(\o)\|\leq \|F(\o)\|+k$, a.s. on $\o\in\O$. Moreover the sequence $\{\xi_n\}_{n=1}^{+\infty}\subset L^1(\O,\tilde{\bF})$ converges to $\xi$ uniformly a.s. in $\O$, because by Proposition \ref{proposition6.5}
\begin{equation*}
\|\xi_n(\o)-\xi(\o)\|=\|\tilde{J}(G_n(\o))-\tilde{J}(F(\o))\|=h(G_n(\o),F(\o)) \qquad\o\in\O,
\end{equation*}
and hence $\xi$ is measurable. Moreover $\|\xi(\o)\|\leq \|F(\o)\|+k$ a.s. on $\O$ and thus $\xi\in L^1(\O,\tilde{\bF})$. Furthermore, by Proposition \ref{proposition6.5}, for each $n\in\bN$ one has
\begin{equation*}
\tilde{J}\left(\hint G_n dP\right) =\int_{\O}\negthickspace\negthickspace\xi_n dP,
\end{equation*}
from which \eqref{eq6.1} follows at once by letting $n\to+\infty$. Moreover the integral on the right-hand side of \eqref{eq6.1} is an element of $\cW$, because each $\xi_n$ takes values in $\bW$, and $\bW$ is a convex and closed cone contained in $\tilde{\bF}$. This completes the proof.
\end{proof}

It is worth noting that, by Proposition \ref{proposition5.7}, the expectation $E(F)$ of any random set $F\in\bL^1(\O,\ffC)$ is a nonempty bounded closed and convex subset of $\bE$, $i.e.$, $E(F)=C$ where $C\in\fffC$.

\begin{theorem}\label{theo6.1}
Let $\cU$ be a nonempty separable subset of $\ffC$, equipped with the induced Pompeiu-Hausdorff metric $h$ of $\ffC$. Let $\{X_n\}_{n=1}^{+\infty}$ be a sequence of independent identically distributed (i.i.d.) random sets $X_n :\O\to\cU$ with expectation $E(X_n)=C$, $n\in\bN$, where $C\in\fffC$. Then
\begin{equation*}
\frac{\overline{\cco X_1(\o)+\dots+\cco X_n(\o)}}{n}\xrightarrow{h} C \qquad\text{as}\;n\to+\infty,\quad \text{a.s. on}\;\;\O.
\end{equation*}
\end{theorem}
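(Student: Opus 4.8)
The plan is to transport the whole problem, via a R\r{a}dstr\"om-type embedding, into a \emph{separable} Banach space, where the classical Mourier strong law of large numbers for i.i.d.\ Bochner-integrable random variables applies, and then to pull the conclusion back by isometry. The reason for working with the convexified maps $\cco X_n$ rather than with $X_n$ themselves is precisely that the former take values in $\fffC$, where the embedding $J$ of Proposition \ref{proposition6.4} is available; the subtlety is that $J$ lands in a Banach space $\bF$ that need not be separable, so I must first restrict to a separable semilinear subspace and invoke the separable variant, Proposition \ref{proposition6.5}.

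First I would build the ambient semilinear space. Set $\cV=\{\cco X\mid X\in\cU\}\subset\fffC$; since $h(\cco A_1,\cco A_2)\leq h(A_1,A_2)$ the map $X\mapsto\cco X$ is non-expansive, so $\cV$ is separable. Let $\cW_0$ be the set of all finite combinations $\l_1 Y_1\mi\dots\mi\l_k Y_k$ with $Y_j\in\cV$ and $\l_j\geq0$, and put $\cW=\overline{\cW_0}$ (closure in the $h$-metric of $\fffC$). Then $\cW$ is closed in the complete space $\fffC$, hence complete, and stable under $\mi$ and multiplication by nonnegative scalars, so it is a semilinear complete metric space. Using the continuity of these operations (Proposition \ref{proposition6.1}), combinations with rational coefficients drawn from a countable dense subset of $\cV$ are dense in $\cW_0$, so $\cW$ is separable. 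Now Proposition \ref{proposition6.5} yields a separable Banach space $\tilde{\bF}$ and an isometric, additive and positively homogeneous embedding $\tilde{J}:\cW\to\bW=\tilde{J}(\cW)\subset\tilde{\bF}$. Each $\cco X_n$ is strongly measurable with values in $\cV\subset\cW$ and integrable (as $\|\cco X_n\|\leq\|X_n\|$), hence $\cco X_n\in\bL^1(\O,\cW)$; define $\xi_n:\O\to\tilde{\bF}$ by $\xi_n(\o)=\tilde{J}(\cco X_n(\o))$. By Proposition \ref{proposition6.6}, $\xi_n\in L^1(\O,\tilde{\bF})$, and since $\xi_n=(\tilde{J}\circ\cco)\circ X_n$ is a fixed continuous transformation of $X_n$, the $\xi_n$ are i.i.d.\ in $\tilde{\bF}$.

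Finally I would run the classical law. As $\tilde{\bF}$ is separable and the $\xi_n$ are i.i.d.\ and Bochner-integrable, Mourier's strong law gives $\frac1n\sum_{i=1}^n\xi_i\to\int_\O\xi_1\,dP$ a.s.\ in $\tilde{\bF}$. By Proposition \ref{proposition6.6} together with Proposition \ref{proposition5.7} and the hypothesis $E(X_1)=C$, we have $\int_\O\xi_1\,dP=\tilde{J}\!\left(\hint(\cco X_1)\,dP\right)=\tilde{J}(C)$, which in particular shows $C\in\cW$. Using property $(i)$ of Proposition \ref{proposition6.5},
\begin{equation*}
\frac1n\sum_{i=1}^n\xi_i(\o)=\tilde{J}\!\left(\frac{\overline{\cco X_1(\o)+\dots+\cco X_n(\o)}}{n}\right),
\end{equation*}
and then the isometry property $(ii)$ converts the a.s.\ convergence of these Banach-space averages to $\tilde{J}(C)$ into
\begin{equation*}
h\!\left(\frac{\overline{\cco X_1(\o)+\dots+\cco X_n(\o)}}{n},\,C\right)\longrightarrow 0\qquad\text{a.s. on }\O,
\end{equation*}
which is exactly the assertion. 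I expect the main obstacle to be the reduction to a \emph{separable} target: one must produce a $\cW$ that simultaneously contains all $\cco X_n(\o)$, their Ces\`aro averages, and the limit $C$, is semilinear and complete, and is separable, since only then does Proposition \ref{proposition6.5} apply and only then is Mourier's law available. Once $\cW$ is in place, the remaining work---the measurability and integrability of $\xi_n$ and the identification $\int_\O\xi_1\,dP=\tilde{J}(C)$---is routine given Propositions \ref{proposition5.7} and \ref{proposition6.6}.
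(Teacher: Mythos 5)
Your proposal is correct and follows essentially the same route as the paper's own proof: you construct the same semilinear complete separable space $\cW$ generated by $\cV=\{\cco X\mid X\in\cU\}$, apply Proposition \ref{proposition6.5} to obtain the separable embedding $\tilde{J}$, identify $\int_\O\xi_1\,dP=\tilde{J}(C)$ via Propositions \ref{proposition6.6} and \ref{proposition5.7}, invoke Mourier's SLLN in $\tilde{\bF}$, and pull back by the isometry. No substantive differences from the paper's argument.
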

\begin{proof}
Set $\cV=\{Z\in\fffC \;|\; Z=\cco X \;\text{where}\; X\in\cU\}$ and define
\begin{multline*}
\cW_0=\{ Z\in\fffC \;|\; Z=\l_1 X_1\mi\dots\mi \l_n X_n\qquad\qquad\qquad\qquad\qquad\\
 \text{for some}\; X_i\in\cV, \l_i\geq 0, i=1,\dots,n,\;n\in\bN\},
\end{multline*}
and
\begin{equation*}
\cW=\text{cl}_{\fffC}[\cW_0].
\end{equation*}
The set $\cV$ is separable. To see this let $\{X_n\}_{n=1}^{+\infty}\subset \cU$ be a sequence dense in $\cU$. Then the sequence $\{\cco X_n\}_{n=1}^{+\infty}\subset \cV$ is dense in $\cV$. In fact, given $\ep>0$ and $Z\in\cV$, $i.e.$, $Z=\cco X$ for some $X\in\cU$, then taking a $X_n$ so that $h(X_n,X)<\ep$ one has $h(\cco X_n,\cco X)<\ep$. Clearly, the separability of $\cV$ implies that both $\cW_0$ and $\cW$ are separable. As $\fffC$ is complete, and $\cW$ is closed in $\fffC$ it follows that $\cW$ equipped with the Pompeiu-Hausdorff metric $h$ is a complete metric space. In addition $\cW$ is stable under the operations of addition and multiplication by nonnegative scalars because $\cW_0$ is so. Hence, $\cW\subset\fffC$ is a semilinear complete and separable metric space. Then, by virtue of Proposition \ref{proposition6.5}, there exists a separable Banach space $\tilde{\bF}$ such that, denoting by $\tilde{J}$ the restriction of $J$ to $\cW$ and setting $\bW=\tilde{J}(\cW)$, the properties of Proposition \ref{proposition6.5} are satisfied.

Now consider the sequence of random sets $\cco X_n:\O\to\cW$. In view of \cite{Taylor1978}, $\{\cco X_n\}_{n=1}^{+\infty}$ is a sequence of i.i.d. random sets because $\{X_n\}_{n=1}^{+\infty}$ is so and, moreover, the map $\cco :\ffC\to\fffC$ is nonexpansive. For each $n\in\bN$ let $\xi_n:\O\to\bW$ be given by
\begin{equation*}
\xi_n(\o)=\tilde{J}(\cco X_n(\o)) \qquad \o\in\O.
\end{equation*}
As the map $\tilde{J}:\cW\to\bW$ is nonexpansive, it follows that $\{\xi_n\}_{n=1}^{+\infty}$ is a sequence of i.i.d. random variables and, moreover, each $\xi_n$ takes values in $\tilde{\bF}$, a separable Banach space. Furthermore for each $n\in\bN$,
\begin{equation*}
E(\xi_n)=\int_{\O}\negthickspace\negthickspace\xi_n dP= \int_{\O}\negthickspace\negthickspace\tilde{J}(\cco X_n)dP = \tilde{J}\left(\hint(\cco X_n)dP\right) =\tilde{J}(E(X_n))= \tilde{J}(C),
\end{equation*}
since the Hukuhara integral equals $E(X_n)$ by Proposition \ref{proposition5.7}, and $E(X_n)=C$, $n\in\bN$, by hypothesis. By virtue of the SLLN theorem for random variables in a separable Banach spaces (see \cite{Mourier1956}) it follows that
\begin{equation*}
\frac{\xi_1(\o)+\dots+\xi_n(\o)}{n}\to \tilde{J}(C) \qquad\text{as}\;n\to+\infty,\quad \text{a.s. on}\;\O.
\end{equation*}
Consequently, by virtue of Proposition \ref{proposition6.5},
\begin{equation*}
\frac{\overline{\cco X_1(\o)+\dots+\cco X_n(\o)}}{n}\xrightarrow{h} C \qquad\text{as}\;n\to+\infty,\quad \text{a.s. on}\;\O.
\end{equation*}
This completes the proof.
\end{proof}

\begin{theorem}\label{theo6.2}
Let $\cU$ be a nonempty separable subset of $\ffC$ equipped with the induced Pompeiu-Hausdorff metric $h$ of $\ffC$. Let $\{X_n\}_{n=1}^{+\infty}$ be a sequence of i.i.d. random sets $X_n:\O\to\cU$ with expectation $E(X_n)=C$, $n\in\bN$, where $C\in\fffC$. Furthermore suppose that
\begin{equation}\label{eq6.2}
\{X_n(\o)\}_{n=1}^{+\infty}\subset \cZ(\o) \qquad \text{a.s. on}\;\O,
\end{equation}
where, for $\o\in\O$ a.s., $\cZ(\o)$ is a nonempty compact subset of $\cU$. Then
\begin{equation*}
\frac{\overline{X_1(\o)+\dots+X_n(\o)}}{n}\xrightarrow{F} C \qquad\text{as}\;n\to+\infty,\quad \text{a.s. on}\;\O.
\end{equation*}
If, in addition, $C\in\fffC$ is compact, then
\begin{equation*}
\frac{\overline{X_1(\o)+\dots+X_n(\o)}}{n}\xrightarrow{h} C \qquad\text{as}\;n\to+\infty,\quad \text{a.s. on}\;\O.
\end{equation*}
\end{theorem}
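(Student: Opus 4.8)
The plan is to observe that the probabilistic content of the statement has already been isolated in Theorem~\ref{theo6.1}, and that what remains is to transfer the conclusion from the convexified averages to the original ones by applying, pointwise on a set of full probability, the deterministic convexification-in-the-limit results of Section~\ref{section4}.

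First I would invoke Theorem~\ref{theo6.1}: since $\cU$ is separable and $\{X_n\}_{n=1}^{+\infty}$ is i.i.d.\ with $E(X_n)=C$, there is a set $\O_1\subset\O$ with $P(\O_1)=1$ such that, for every $\o\in\O_1$,
\[
\frac{\overline{\cco X_1(\o)+\dots+\cco X_n(\o)}}{n}\xrightarrow{h} C\qquad\text{as }n\to+\infty.
\]
By Proposition~\ref{proposition3.2}, Pompeiu--Hausdorff convergence implies Fisher convergence, so for every $\o\in\O_1$ the convexified averages also converge to $C$ in the sense of Fisher. Next let $\O_2\subset\O$ with $P(\O_2)=1$ be the set on which \eqref{eq6.2} holds, so that for $\o\in\O_2$ the sequence $\{X_n(\o)\}_{n=1}^{+\infty}$ is contained in a nonempty compact subset $\cZ(\o)$ of $\cU\subset\ffC$. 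Put $\O_0=\O_1\cap\O_2$, whence $P(\O_0)=1$. Fixing an arbitrary $\o\in\O_0$, the deterministic sequence $\{X_n(\o)\}_{n=1}^{+\infty}$ lies in the $h$-compact set $\cZ(\o)\subset\ffC$, and the Fisher convergence of the convexified averages is exactly the hypothesis of Theorem~\ref{theo4.1}. Applying that theorem with $\cZ=\cZ(\o)$ gives
\[
\frac{\overline{X_1(\o)+\dots+X_n(\o)}}{n}\xrightarrow{F} C\qquad\text{as }n\to+\infty,
\]
valid for every $\o\in\O_0$ and therefore a.s.\ on $\O$, which is the first assertion.

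For the additional claim, suppose $C$ is compact; being an element of $\fffC$ it is in particular convex. Fixing $\o\in\O_0$ as before, the sequence $\{X_n(\o)\}_{n=1}^{+\infty}$ still lies in the $h$-compact set $\cZ(\o)$ and the convexified averages still converge to $C$ in the sense of Fisher, so now the stronger conclusion of Theorem~\ref{theo4.2} (in place of Theorem~\ref{theo4.1}) applies and yields $h$-convergence of $(\,\overline{X_1(\o)+\dots+X_n(\o)}\,)/n$ to $C$, again for every $\o\in\O_0$ and hence a.s. The only measure-theoretic point is the bookkeeping with the two exceptional null sets: once $\o$ is fixed in the full-measure set $\O_0$ the entire argument is deterministic, and the compact set $\cZ(\o)$ is permitted to depend on $\o$ because Theorems~\ref{theo4.1} and \ref{theo4.2} are invoked separately for each $\o$. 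I expect no genuine obstacle here, since the substantive difficulties have already been absorbed into Theorem~\ref{theo6.1} (the strong law for the convexified sequence, obtained through the embedding into a separable Banach space) and into the convexification results Theorems~\ref{theo4.1}--\ref{theo4.2}; the present proof is essentially their combination.
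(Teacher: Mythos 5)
Your proposal is correct and follows essentially the same route as the paper: invoke Theorem~\ref{theo6.1} for the convexified averages, weaken $h$-convergence to Fisher convergence, and then apply Theorems~\ref{theo4.1} and \ref{theo4.2} pointwise on the intersection of the two full-measure sets. Your explicit bookkeeping of the null sets and your remark that $\cZ(\o)$ may depend on $\o$ (harmless, since the convexification theorems are deterministic and applied for each fixed $\o$) only make explicit what the paper leaves implicit.
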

\begin{proof}
By virtue of Theorem \ref{theo6.1}
\begin{equation*}
\frac{\overline{\cco X_1(\o)+\dots+\cco X_n(\o)}}{n}\xrightarrow{h} C \qquad\text{as}\;n\to+\infty,\quad \text{a.s. on}\;\O.
\end{equation*}
and thus, a fortiori,
\begin{equation*}
\frac{\overline{\cco X_1(\o)+\dots+\cco X_n(\o)}}{n}\xrightarrow{F} C \qquad\text{as}\;n\to+\infty,\quad \text{a.s. on}\;\O.
\end{equation*}
From the latter and \eqref{eq6.2}, by Theorem \ref{theo4.1}, it follows that
\begin{equation}\label{eq6.3}
\frac{\overline{X_1(\o)+\dots+X_n(\o)}}{n}\xrightarrow{F} C \qquad\text{as}\;n\to+\infty,\quad \text{a.s. on}\;\O.
\end{equation}
If $C\in\fffC$ is also compact, then from \eqref{eq6.3}, in view of Theorem \ref{theo4.2}, one has
\begin{equation*}
\frac{\overline{X_1(\o)+\dots+X_n(\o)}}{n}\xrightarrow{h} C \qquad\text{as}\;n\to+\infty,\quad \text{a.s. on}\;\O.
\end{equation*}
This completes the proof.
\end{proof}

\begin{corollary}
Let $\{X_n\}_{n=1}^{+\infty}$ be a sequence of i.i.d. random sets $X_n:\O\to\cZ$, where $\cZ$ is a compact subset of $\ffC$, with expectations $E(X_n)=C$, $n\in\bN$, where $C\in\fffC$. Then
\begin{equation*}
\frac{\overline{\cco X_1(\o)+\dots+\cco X_n(\o)}}{n}\xrightarrow{h} C \qquad\text{as}\;n\to+\infty,\quad \text{a.s. on}\;\;\O
\end{equation*}
and
\begin{equation*}
\frac{\overline{X_1(\o)+\dots+X_n(\o)}}{n}\xrightarrow{F} C \qquad\text{as}\;n\to+\infty,\quad \text{a.s. on}\;\;\O.
\end{equation*}
\end{corollary}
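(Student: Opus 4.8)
The plan is to recognize that this corollary is an immediate specialization of Theorems \ref{theo6.1} and \ref{theo6.2}, once one observes that a compact subset of $\ffC$ is automatically separable and can serve simultaneously as the ambient separable family $\cU$ and as the constant compact set $\cZ(\o)$ appearing in hypothesis \eqref{eq6.2}.

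First I would note that, being compact in the metric $h$, the set $\cZ$ is totally bounded and hence separable; thus $\cU:=\cZ$ is a nonempty separable subset of $\ffC$, and the i.i.d. random sets $X_n:\O\to\cZ=\cU$ have common expectation $C\in\fffC$. Applying Theorem \ref{theo6.1} directly with $\cU=\cZ$ yields the first assertion,
\begin{equation*}
\frac{\overline{\cco X_1(\o)+\dots+\cco X_n(\o)}}{n}\xrightarrow{h} C\qquad\text{as}\;n\to+\infty,\quad\text{a.s. on}\;\O.
\end{equation*}

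For the second assertion I would verify hypothesis \eqref{eq6.2} of Theorem \ref{theo6.2} by taking the constant set $\cZ(\o)=\cZ$ for every $\o\in\O$. Since every $X_n(\o)$ lies in $\cZ$, and $\cZ$ is a nonempty compact subset of $\cU=\cZ$, the inclusion $\{X_n(\o)\}_{n=1}^{+\infty}\subset\cZ(\o)$ holds for all $\o$ (not merely a.s.), so Theorem \ref{theo6.2} applies and furnishes the Fisher convergence
\begin{equation*}
\frac{\overline{X_1(\o)+\dots+X_n(\o)}}{n}\xrightarrow{F} C\qquad\text{as}\;n\to+\infty,\quad\text{a.s. on}\;\O.
\end{equation*}

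There is no genuine obstacle here: the only point requiring any care is the passage from compactness of $\cZ$ to its separability, which is precisely what licenses the invocation of Theorem \ref{theo6.1}, together with the trivial but essential remark that the constant choice $\cZ(\o)\equiv\cZ$ supplies exactly the precompactness condition \eqref{eq6.2} demanded by Theorem \ref{theo6.2}. Note also that the Fisher conclusion requires no compactness of the limit $C$, so nothing further need be assumed.
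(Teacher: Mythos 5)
Your proof is correct and follows exactly the paper's argument: the corollary is deduced by observing that compactness of $\cZ$ in the metric $h$ implies its separability, so Theorem \ref{theo6.1} applies with $\cU=\cZ$, and the constant choice $\cZ(\o)\equiv\cZ$ verifies hypothesis \eqref{eq6.2} of Theorem \ref{theo6.2}. The paper's own proof is a one-line version of the same reasoning.
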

\begin{proof}
The statement follows from Theorems \ref{theo6.1} and \ref{theo6.2}, since $\cZ$ is a separable subset of $\ffC$.
\end{proof}

\begin{Remark}
It is worth noting that in Theorems \ref{theo6.1} and \ref{theo6.2} the separability assumption on $\cU$ and the compactness assumption on $\cZ(\o)$ do not imply that the corresponding sets $U\subset\bE$ and $V\subset\bE$ given by
\begin{equation*}
U=\bigcup\{X\;|\;X\in\cU\},\qquad\qquad V=\overline{\bigcup\{X_n(\o)\;|\;n\in\bN\}}
\end{equation*}
be respectively separable, compact. To see this it suffices to consider the trivial examples $\cU=\{\bar{B}\}$ and $X_n(\o)=\bar{B}$, for $\o\in\O$, $n\in\bN$, where $\bar{B}$ denotes the closed unit ball in an infinite dimensional non-separable Banach space.
\end{Remark}

\bibliographystyle{amsplain}


\end{document}